\newcommand{\Rmnum}[1]{\expandafter\@slowromancap\romannumeral #1@}
\newcommand{\ud}{\mathrm{d}}
\newcommand{\f}{\frac}
\newcommand{\pppp}[4]%
  {\frac{\partial^3{#1}}{\partial{#2}\partial{#3}\partial{#4}}}
\newcommand{\yI}{y^i}
\newcommand{\yk}{y_k}
\renewcommand{\a}{\alpha}
\renewcommand{\b}{\beta}
\newcommand{\ab}{(\alpha,\beta)}
\newcommand{\ba}{\bar\alpha}
\newcommand{\bb}{\bar\beta}
\newcommand{\aij}{a_{ij}}
\newcommand{\bij}{b_{i|j}}
\newcommand{\baij}{\bar a_{ij}}
\newcommand{\bi}{b_i}
\newcommand{\bj}{b_j}
\newcommand{\bk}{b_k}
\newcommand{\bI}{b^i}
\newcommand{\bK}{b^k}
\newcommand{\bbi}{\bar b_i}
\newcommand{\rij}{r_{ij}}
\newcommand{\brij}{\bar r_{ij}}
\newcommand{\rIi}{r^i{}_i}
\newcommand{\rIk}{r^i{}_k}
\newcommand{\roo}{r_{00}}
\newcommand{\rko}{r_{k0}}
\newcommand{\rIo}{r^{i}{}_{0}}
\newcommand{\ri}{r_i}
\newcommand{\rj}{r_j}
\newcommand{\rI}{r^i}
\newcommand{\rk}{r_k}
\newcommand{\ro}{r_0}
\newcommand{\roohk}{r_{00|k}}
\newcommand{\roohb}{r_{00|b}}
\newcommand{\rkoho}{r_{k0|0}}
\newcommand{\rIhk}{r^i{}_{|k}}
\newcommand{\rIho}{r^i{}_{|0}}
\newcommand{\rIhi}{r^i{}_{|i}}
\newcommand{\roho}{r_{0|0}}
\newcommand{\rohk}{r_{0|k}}
\newcommand{\rkho}{r_{k|0}}
\newcommand{\rohb}{r_{0|b}}
\newcommand{\rhk}{r_{|k}}
\newcommand{\rhb}{r_{|b}}
\newcommand{\sij}{s_{ij}}
\newcommand{\sIo}{s^i{}_0}
\newcommand{\sko}{s_{k0}}
\newcommand{\bsij}{\bar s_{ij}}
\newcommand{\si}{s_i}
\newcommand{\sj}{s_j}
\newcommand{\sk}{s_k}
\newcommand{\sI}{s^i}
\newcommand{\bsi}{\bar s_i}
\newcommand{\so}{s_0}
\newcommand{\bso}{\bar  s_0}
\newcommand{\sIohi}{s^i{}_{0|i}}
\newcommand{\sIohk}{s^i{}_{0|k}}
\newcommand{\sIkho}{s^i{}_{k|0}}
\newcommand{\sIoho}{s^i{}_{0|0}}
\newcommand{\sIhi}{s^i{}_{|i}}
\newcommand{\sIhk}{s^i{}_{|k}}
\newcommand{\sIho}{s^i{}_{|0}}
\newcommand{\soho}{s_{0|0}}
\newcommand{\sohb}{s_{0|b}}
\newcommand{\sohk}{s_{0|k}}
\newcommand{\skho}{s_{k|0}}
\newcommand{\poo}{p_{00}}
\newcommand{\pk}{p_k}
\newcommand{\po}{p_0}
\newcommand{\qk}{q_k}
\newcommand{\qI}{q^i}
\newcommand{\qoo}{q_{00}}
\newcommand{\qko}{q_{k0}}
\newcommand{\qok}{q_{0k}}
\newcommand{\qo}{q_0}
\newcommand{\qqk}{q^*{}_k}
\newcommand{\qqo}{q^*{}_0}
\newcommand{\too}{t_{00}}
\newcommand{\btoo}{\bar t_{00}}
\newcommand{\tIo}{t^i{}_{0}}
\newcommand{\tk}{t_k}
\newcommand{\tI}{t^i}
\renewcommand{\to}{t_0}
\newcommand{\tIk}{t^i{}_{k}}
\newcommand{\tIi}{t^i{}_{i}}
\newcommand{\dIk}{\delta^i{}_k}
\newcommand{\RIk}{{R}^i{}_k}
\newcommand{\bRIk}{{}^{\bar\alpha} \bar{R}^i{}_k}
\newcommand{\Rbb}{R_{bb}}
\newcommand{\bRbb}{\bar R_{\bar b\bar b}}
\newcommand{\Ricoo}{{Ric_{00}}}
\newcommand{\bRicoo}{{}^{\bar{\alpha}}{{Ric_{00}}}}
\newcommand{\Rico}{{Ric_{0}}}
\newcommand{\Ric}{{Ric}}
\newtheorem{lemma}{Lemma}[section]
\newtheorem{proposition}[lemma]{Proposition}
\newtheorem{theorem}[lemma]{Theorem}
\newtheorem{example}[lemma]{Example}
\newenvironment{remark}{\textbf{Remark}}{}
\numberwithin{equation}{section}
\begin{document}
\title{Deformations and Einstein metrics \Rmnum{1}}
\footnotetext{\emph{Keywords}:	Einstein metric, Killing field, $\b$-deformation.
\emph{Mathematics Subject Classification}: 53B21, 53C25, 53A45, 53A55.}

\author{Changtao Yu}
\date{}
\maketitle

\begin{abstract}
	This essay is about how to construct a new Einstein metric by an old one. Given an Einstein metric $\a$ and its Killing $1$-form $\b$, donote $b:=\|\b\|_{\a}$, we aim to determined the deformation factors $e^{\rho(b^2)}$ and $\kappa(b^2)$ such that $e^{\rho(b^2)}\sqrt{\a^2-\kappa(b^2)\b^2}$ becomes an Einstein metric. In face, it will depends critically on the peculiarities of the Killing $1$-form. As the first article in this series, we assume $\b$ satisfies two curcial conditions (\ref{owemigneigaindna}) and (\ref{tuabgengand}), which are simple, natural and occursing only on even-dimensional manifolds. In this essay, we just need to regard the metric as a quadratic form. Any other additional structure on manifolds, such as topological structure, complex structure, etc., are not used.
\end{abstract}

\section{Introduction}
The theme of this essay is not about Finsler geometry, but talking something about this subject may be useful to understand what are we going to do.  In fact, the crucial method in our discussion, namely a special kind of metrical deformations, arises from the resent research on Finsler geometry.

Some Finsler metrics are determined by a Riemannian metric $\a$ and a $1$-form $\b$. Such kind of Finsler metrics is much closeing to Riemannian metrics\cite{yct-zhm-pfga}. For example, take $\a=|y|$, $\b=\langle x,y\rangle$ and denote $b:=\|\b\|_{\a}$, then
$F=\f{\sqrt{(1-b^2)\a^2+\b^2}}{1-b^2}+\f{\b}{1-b^2}$ is the famous Funk metric on $\mathbb{B}^n$\cite{funk-uber,db-robl-szm-zerm}. Another two well-known metrics, which are found by L. Berwald\cite{berw-uber} and R. Bryant\cite{brya-some} respectively, are $F=\f{(\sqrt{(1-b^2)\a^2+\b^2}+\b)^2}{(1-b^2)^2\sqrt{(1-b^2)\a^2+\b^2}}$ defined on $\mathbb{B}^n$ and $F=\Re\f{\sqrt{(e^{ip}+b^2)\a^2+\b^2}-i\b}{e^{ip}+b^2}$ defined on $\mathbb{S}^n$. All of them are of excellent geometrical properties: complete or at least forward complete, locally projectively flat, and of constant flag curvature\cite{yct-zhm-pfga}.

From an algebraic point of view, a Riemannian metric is just a quadratic form. Hence, if you have a Riemannian metric $\a$ and a $1$-form $\b$, you can obtain infinity many Riemannian metrics by using $\b$ to deform $\a$ as below
\begin{eqnarray*}
	\ba=e^{\rho}\sqrt{\a^2-\kappa \b^2}.
\end{eqnarray*}
The factors $\kappa$ and $\rho$ could depend on the points of the given manifold. However, based on our earlier researches, we will just consider a simple case, namely they depend only on the length of $\b$. Such special kind of metrical deformations, introduced by the author, are called $\b$-deformations\cite{yct-dhfp}.

$\b$-deformations are used successfully to solve some problems in Finsler geometry. However, such technique relates only to Riemannian metrics. Hence, we aim to explore the effect of $\b$-deformations in Riemannian geometry, and this essay is just the beginning.

Many facts declare a simple phenomenon, that is, good data produce good structures. Our question is very simple. Given an Einstein metric $\a$ and a special $1$-form $\b$, can we obtain some new Einstein metrics by using data $\ab$ combining with $\b$-deformations?

The answer is positive, as we expected at the very beginning of this project. In fact, we will choose $\a$ as a Riemannian metric with constant sectional curvature or even an Euclidean metric, and $\b$ as simple as possible. Even so, we can obtain infinity many Einstein metrics by this new way, some of them are known, and some of them are new.

Otherwise, perhaps any Einstein metric can be rebuilt by a similar way. We need to explain it a little bit more.

Consider a Riemannian metric $\a$ and some $1$-forms $\b_1$, $\b_2$, $\cdots$, $\beta_m$. Let $b_{kl}=\langle\beta_k,\beta_l\rangle_\a$, $\rho$ and $\kappa_{ij}$ be functions of all the $b_{kl}$. Then
\begin{eqnarray*}
	\ba=e^{\rho}\sqrt{\a^2-\sum\kappa_{ij}\b_i\b_j}
\end{eqnarray*}
is a Riemannian metric when the factors $\kappa_{ij}$ satisfy some necessary restrictions to make sure the positive definiteness of $\ba$. Such kind of deformations can be regarded as {\it multiple $\b$-deformations}.

By this way, one can obtain many Riemannian metric of excellent geometrical properties beginning with some simple metric and some good $1$-forms. Notice that the multiple $\b$-deformation is reversible general. Hence, for any Riemannian metric of excellent geometrical properties, possibly exist some $1$-forms to deform the metric to be a simple one.

Hence, we propose the following question.

\begin{center}
	\begin{minipage}{0.86\textwidth}
		{\it Question: Given an Einstein metric $\ba$, are there some $1$-forms $\b_1$, $\b_2$, $\cdots$, $\b_m$, such that $\ba$ is the resulting metric under multiple $\b$-deformations globally or locally, in which $\a$ is a Riemannian metric with constant sectional curvature?}
	\end{minipage}
\end{center}

There is some relationship between multiple $\b$-deformations and warped product metrics, which are given in the form as below,
\begin{eqnarray*}
	\bar g=\psi\ud r^2+\phi_1\sigma_1^2+\phi_2\sigma_2^2+\cdots+\phi_{n-1}\sigma_{n-1}^2.
\end{eqnarray*}
If one aims to find some metric satisfying some particular conditions, one can try to determine the suitable factors $\psi$ and $\phi_i$ above. As a classical method in differential geometric, it is simple and computable. It is easy to see that if $\psi$ and $\phi_i$ depend only on the functions $\langle\sigma_i,\sigma_j\rangle_g$ where $g$ is another simper metric such as
\begin{eqnarray*}
	g=\ud r^2+\sigma_1^2+\sigma_2^2+\cdots+\sigma_{n-1}^2,
\end{eqnarray*}
then $\bar g$ can be regarded as a resulting metric under multiple $\b$-deformations by the metric $g$ and the $1$-forms $\sigma_1$, $\sigma_2$, $\cdots$, $\sigma_{n-1}$.

For example, the Euclidean metric can be expressed as a warped product,
\begin{eqnarray*}
	g=\ud r^2+r^2(\sigma_1^2+\sigma_2^2+\cdots+\sigma_{n-1}^2),
\end{eqnarray*}
where all the $1$-forms $\sigma_i$ have constant length $1$ and orthogonal to any other ones. Denote $\alpha=\sqrt{g}$ and $\b_i=r^2\sigma_i$. Then $\langle\b_i,\b_j\rangle_\a=\delta_{ij}r^2$ in which $\delta_{ij}$ is the Kronecker symbol. In this case, the resulting metrics under multiple $\b$-deformations are given by
\begin{eqnarray*}
	\bar g=\psi(r^2)\ud r^2+\sum\phi_{ij}(r^2)\sigma_i\sigma_j,
\end{eqnarray*}
or particularly given by
\begin{eqnarray*}
	\bar g=\psi(r^2)\ud r^2+\phi_1(r^2)\sigma_1^2+\phi_2(r^2)\sigma_2^2+\cdots+\phi_{n-1}(r^2)\sigma_{n-1}^2
\end{eqnarray*}
when all the $\phi_{ij}(i\neq j)$ vanish.

If we use only one $1$-form $\b$, $\b_1$ for instant, to deform $\a$, then the resulting metrics are given by
\begin{eqnarray}
	\bar g=\psi(r^2)\ud r^2+\phi(r^2)\sigma_1^2+\varphi(r^2)(\sigma_2^2+\cdots+\sigma_{n-1}^2).\label{domainnaidnanng}
\end{eqnarray}

Hence, if we begin with the Euclidean metric and such particular $1$-form, then our problem is equivalent to determine all the Einstein metrics given in the form (\ref{domainnaidnanng}). Actually, all the examples constructed in this essay are in this form locally.

However, it does not mean that the method of multiple $\b$-deformations is just a new ``face" of warped product metrics. It is clear that we don't need to assume $\a$ must be a warped product metric. In the world of multiple $\b$-deformations, we only need two objects, a metric $\a$ and several $1$-forms $\b_i$.

The idea of multiple $\b$-deformations is very simple. However, the computation is a little bit complicated. In this essay, we just discuss $\b$-deformation, that is to say, only one $1$-form will be used. Even so, the crucial formula, namely Proposition \ref{RIkunderdeformaion} in Appendix, is obviously formidable.

Our main purpose is to study the structure of Einstein metrics under $\b$-deformations. By the way, we hope to construct some new Einstein metrics by the known ones. More specifically, assume $\a$ is an Einstein metric and $\b$ is a $1$-form, we want to understand the phenomenon of $\a$ and $\b$ under $\b$-deformations.

There are many choices on $\b$, and we believe that Killing forms, namely the dually $1$-forms of Killing vector fields, are a great choice. However, Theorem \ref{dinqienginadng} shows that it is impossible to obtain any non-trivial Einstein metric if there is not any restriction on the Killing form $\b$. Hence, we will add some special restrictions on the Killing form $\b$. There are many restrictions can be chosen. As the first essay of this subject, we will assume the Killing form satisfies a natural additional condition. Perhaps such condition is the simplest one, at least for even dimensional spaces.

This essay is organized in the following way.

Section \ref{idmiaindinngngng} introduces some necessary abbreviations. Section \ref{idimaihdhnaug} includes the definition and some basis facts of $\b$-deformations, partial contents are tedious, which are listed in Appendix.

In Section \ref{diamdinaihgng}, the necessary and sufficient condition of $\ba$ to be an Einstein metric, with $\a$ being an Einstein and $\b$ satisfying the crucial condition (\ref{owemigneigaindn}), is provided. By solving the related equations, we find that there is only one non-trivial solution, which is Ricci-flat and can only happen in $4$-dimensional case. In particular, when $\a$ is an Euclidean metric, such unique solution is Hawking Taub-NUT Riemannian metric.

The result is scarce if there is only one condition is added. However, we find that the constraint on the deformation factors will be relaxed by adding one more condition. In Section \ref{diamidnngiidning}, another condition (\ref{tuabgengand}) on $\b$ will be added. Such condition is natural, because it holds automatically when $\a$ is of constant sectional curvature. Theorem \ref{uenuangandgng} and Lemma \ref{donienannggabdg} show that, in order to make sure $\ba$ being an Einstein metric, the deformation factors $\kappa$ and $\rho$ only need to satisfy two ODEs.

The solutions of these two ODEs with the restriction $\kappa=0$ are special, which are provided in Lemma \ref{wyeyyabndgagd}. Actually, the corresponding deformations generate a transformation group, and all the data $\ab$ satisfying the additional conditions (\ref{owemigneigaindna})(namely (\ref{owemigneigaindn})) and (\ref{tuabgengand}) are closed under the action of this group. In particular, such group acts transitively on the subset in which $\a$ has constant sectional curvature. Moreover, the related discussion shows that our crucial condition  (\ref{owemigneigaindn}) only happens on even dimensional spaces. These contents are stated in Section \ref{di4ufnang}.

In Section \ref{dimiunuang}, by solving the crucial ODEs in Lemma \ref{donienannggabdg}, we  obtain all the possible deformation factors expressed as a form of integral. Not all the solutions are elementary. However, we find fortunately that if $\a$ is of constant sectional curvature, then we can express all the Einstein metrics as a warped product form (\ref{domainnaidnanng}) explicitly.

Section \ref{sinainhhdngnnag} dedicates to the $4$-dimensional case. By which one can see that our metrics cover many well-known $4$-dimensional Einstein metrics, such as Fubini-Study metric, Bergmann metric, Hawking Taub-NUT metric, Eguchi-Hanson's metrics and Pedersen's metrics, etc. They appear in various forms in many literatures. However, our can't cover all the Belinskii-Gibbons-Pope -Page metrics\cite{BGPP}
\begin{eqnarray*}
	g=\omega^{-1}\ud r^2
	+\omega r^2\left\{\left(1-\frac{m_1}{r^4}\right)^{-1}\sigma_1^2
	+\left(1-\frac{m_2}{r^4}\right)^{-1}\sigma_2^2
	+\left(1-\frac{m_3}{r^4}\right)^{-1}\sigma_3^2\right\}
\end{eqnarray*}
in which $\omega:=\sqrt{(1-\frac{m_1}{r^4})(1-\frac{m_2}{r^4})(1-\frac{m_3}{r^4})}$, unless $(m_1-m_2)(m_2-m_3)(m_3-m_1)=0$, which belong to Eguchi-Hanson's metrics actually. Obviously, the complete BGPP metrics can be covered by multiple $\b$-deformations.

As we have pointed out before, the initial metric can not necessarily be expressed as a warped product. In Section \ref{adiannnndnnggbabbggg}, we would like to express the resulting Einstein metrics by $\a$ and $\b$ directly. The advantages of doing so are expressing the metric $\a$ independent of coordinate systems, and meanwhile, showing the effect of the $1$-form $\b$ clearly.

It is not easy, since the deformation factors are hard to be expressed analytically. We can only provide serval examples. As the simplest one, the first example is great, which can be regarded as the generalization of Eguchi-Hanson metrics on higher dimensional spaces. In the second example, we provide a simple expression for Fubini-Study metric or Bergmann metric as $\sqrt{\a^2-\mu\b^2}$, which has Ricci constant $\bar\mu=\frac{n+2}{n-1}\mu$. Other interesting metrics are discussed in Example \ref{dubaudvajdvgjkhbag}-Example \ref{dubavffaccddhgga}.

We should point out that the completeness of the Einstein metrics constructed in this essay will not be argument. Parts of them are complete or can be extended to be a complete metric.

On the other hand, perhaps the resulting metric after $\b$-deformation will not defined on the original manifold because of some latent singularity. Fubini-Study metric is a typical example. Consider a standard rough sphere $\mathbb S^{2n}$ and its $1$-form satisfying (\ref{owemigneigaindn}), then the metric $\sqrt{\a^2-\b^2}$ defines on $\mathbb{CP}^n$ other than $\mathbb S^{2n}$, since it degenerates along the equator. See Example \ref{doaieningangga} for details.

Finally, it is worth mentioning that the method of $\b$-deformations is appropriate for any other pseudo-Riemannian metrics, although we focus on the Riemannian case.

\section{Preliminaries}\label{idmiaindinngngng}
In this essay, we need some abbreviations, which are frequently-used in the literatures on Finsler geometry.

Let $\a=\sqrt{a_{ij}(x)y^iy^j}$ be a Riemannian metric and $\b=b_iy^i$ be a $1$-form, in which $(x,y)$ is the natural local coordinate system for a $n$-dimensional manifold. Denote
\begin{eqnarray*}
	\rij:=\f{1}{2}(\bij+b_{j|i}),\quad\sij:=\f{1}{2}(\bij-b_{j|i})
\end{eqnarray*}
be the symmetrization and antisymmetrization of the covariant derivative $\bij$ respectively, then we can define some related tensors as below:
\begin{eqnarray*}
	&r_{i0}:=r_{ij}y^j,\quad r^i{}_0:=a^{im}r_{m0},\quad r_{00}:=r_{i0}y^i,\quad r_i:=b^mr_{mi},\quad r^i:=a^{im}r_{m},\quad r_0:=r_iy^i,\\
	&r:=r_ib^i,\quad s_{i0}:=s_{ij}y^j,\quad s^i{}_0:=a^{im}s_{m0},\quad s_i:=b^ms_{mi},\quad s^i:=a^{im}s_{m},\quad s_0:=s_iy^i.
\end{eqnarray*}
Roughly speaking, indices are raised or lowered by $a^{ij}$ or $\aij$, vanished by contracted with $b^i$~(or~$\bi$) and changed to be '${}_0$' by contracted with $y^i$ (or $y_i:=a_{ij}y^j$).

At the same time, we need other three tensors
\begin{eqnarray*}
	p_{ij}:=r_{im}r^m{}_j,\qquad q_{ij}:=r_{im}s^m{}_j,\qquad t_{ij}:=s_{im}s^m{}_j
\end{eqnarray*}
and the related tensors determined by the above rules. Notice that both $p_{ij}$ and $t_{ij}$ are symmetric, but $q_{ij}$
is neither symmetric nor antisymmetric in general. So $b^jq_{ji}$ and $b^jq_{ij}$, denoted by $q_i$ and $q^\star_i$ respectively, are different. But $b^iq_i$, denoted by $q$, is equal to $b^iq^\star_i$.

Moreover, in order to avoid ambiguity, sometimes we have to use index $b$, which means contracting the corresponding index with $b^i$ or $b_i$. For example, $\sohb:=s_{0|k}b^k$.

Finally, the coefficients of the Riemann tensor are denoted by $\RIk$, which are quadratic homogeneous on $y$. $\a$ has constant sectional curvature $\mu$ means $\RIk=\mu(\a^2\dIk-\yI\yk)$. A special quantity $\Rbb$, denoted by $\RIk\bi\bK$ according to our rules, is required. The Ricci tensor is denoted by $Ric_{ij}:=\frac{1}{2}[R^k{}_k]_{y_iy_j}$, with some related quantities $\Ricoo:=Ric_{ij}y^iy^j$, $\Rico:=Ric_{ij}y^ib^j$, and $\Ric:=R_{ij}b^ib^j$. $\a$ has constant Ricci curvature $\mu$ means $\Ricoo=(n-1)\mu\a^2$.

\section{$\b$-deformation and some basic formulea}\label{idimaihdhnaug}
Given a Riemannian metric $\a$ and a $1$-form $\b$. Denote $b:=\|\b\|_{\a}$. $\b$-deformation is a special metrical deformation as below
\begin{eqnarray*}
	\ba=e^{\rho(b^2)}\sqrt{\a^2-\kappa(b^2)\b^2},\qquad\bb=\nu(b^2)\b,
\end{eqnarray*}
including stretch and conformal deformation for the Riemannian metric and length deformation for the $1$-form.
\begin{figure}[h]
	\centering
	\includegraphics[scale=0.6]{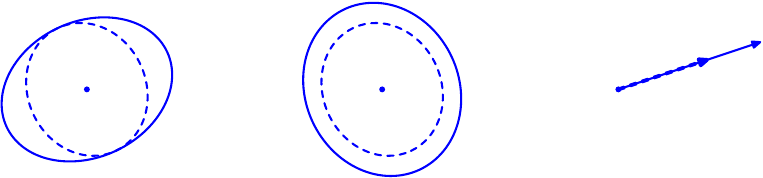}
	\caption{Stretch, Conformal and Length Deformation}
\end{figure}

Since $\baij=e^{2\rho}(\aij-\kappa\bi\bj)$, $\bbi=\nu\bi$, we have
\begin{eqnarray*}
	\bar a^{ij}=e^{-2\rho}\left(a^{ij}+\frac{\kappa}{1-b^2\kappa}b^ib^j\right),\qquad \bar b^i=\frac{e^{-2\rho}\nu}{1-b^2\kappa}b^i.
\end{eqnarray*}
As a result, the length of $\bb$ with respect to $\ba$ is determined by,
\begin{eqnarray*}
	\bar b^2=\frac{e^{-2\rho}\nu^2}{1-b^2\kappa}b^2.
\end{eqnarray*}

If we denote
\begin{eqnarray*}
	\Delta(b^2):=1-b^2\kappa,
\end{eqnarray*}
then $\ba$ can be determined by $\Delta$ and $\rho$ as below
\begin{eqnarray}\label{Drho}
	\ba={e^{\rho}\sqrt{\left(\a^2-\f{\b^2}{b^2}\right)+\Delta\f{\b^2}{b^2}}}.
\end{eqnarray}

Obviously that
\begin{eqnarray*}
	\ba^2-\f{\bb^2}{\bar b^2}=e^{2\rho}\left(\a^2-\f{\b^2}{b^2}\right),
\end{eqnarray*}
which indicates $\a^2-\frac{\b^2}{b^2}$ is conformal invariable under $\b$-deformations. Be attended that $\a^2-\frac{\b^2}{b^2}$ is positive semi-definite since it vanishes along the direction $\b^\sharp$. Hence, $\ba$ is a Riemannian metric if and only if $\Delta$ is positive due to (\ref{Drho}). By the way, if $\Delta$ is negative, one will obtain a Lorentzian metric with signature $(n-1,1)$. But $\Delta=0$ is forbidden.

Here are some basic facts about $\b$-deformations. Firstly, the composition of two $\b$-deformations is also a $\b$-deformation.
\begin{lemma}\label{bbchange}
	If $(\ba,\bb)$ is determined by $\b$-deformation with data $(\check\a,\check\b)$ and factors $(\check\Delta(\check b^2),\check\rho(\check b^2),\check\nu(\check b^2))$, and $(\check\a,\check\b)$ is determined by $\b$-deformation with data $(\a,\b)$ and factors $(\Delta(b^2),\rho(b^2),\nu(b^2))$, then $(\ba,\bb)$ can be determined by $\b$-deformation with data $(\a,\b)$ directly. The corresponding deformation factors are given by
	\begin{eqnarray*}
		\left(\check\Delta(\check b^2)\Delta(b^2),\check\rho(\check b^2)+\rho(b^2), \check\nu(\check b^2)\nu(b^2)\right).
	\end{eqnarray*}
\end{lemma}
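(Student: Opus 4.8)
The plan is to work entirely at the level of the coefficient matrix $a_{ij}$ of the metric and the components $b_i$ of the $1$-form, since a $\b$-deformation is completely determined by how it acts on the pair $(\aij,\bi)$. First I would translate the $\Delta$-description back into the factor $\kappa$ via $\kappa=(1-\Delta)/b^2$, so that the first deformation reads $\check a_{ij}=e^{2\rho}(\aij-\kappa\bi\bj)$, $\check b_i=\nu\bi$, and the second reads $\bar a_{ij}=e^{2\check\rho}(\check a_{ij}-\check\kappa\,\check b_i\check b_j)$, $\bar b_i=\check\nu\,\check b_i$, where $\check\kappa=(1-\check\Delta)/\check b^2$. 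The goal is then to substitute the first pair into the second and show that the result is a single deformation of $(\a,\b)$ whose three factors are the claimed composites.

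For the metric I would substitute directly, using $\check b_i=\nu\bi$ in the $b_ib_j$ term of the second deformation:
\begin{eqnarray*}
	\bar a_{ij}=e^{2\check\rho}\big(e^{2\rho}(\aij-\kappa\bi\bj)-\check\kappa\,\nu^2\bi\bj\big)
	=e^{2(\check\rho+\rho)}\aij-e^{2(\check\rho+\rho)}\big(\kappa+e^{-2\rho}\check\kappa\,\nu^2\big)\bi\bj.
\end{eqnarray*}
Thus the composite conformal factor is $e^{\check\rho+\rho}$ (additive in $\rho$), and the composite value of $\kappa$ is $\tilde\kappa:=\kappa+e^{-2\rho}\check\kappa\,\nu^2$. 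The one genuine computation is to simplify $\tilde\kappa$. Using the length formula $\check b^2=e^{-2\rho}\nu^2 b^2/\Delta$ recorded in the excerpt, one gets $\check\kappa=(1-\check\Delta)\Delta e^{2\rho}/(\nu^2 b^2)$, whence $e^{-2\rho}\check\kappa\,\nu^2=(1-\check\Delta)\Delta/b^2$. Combining with $\kappa=(1-\Delta)/b^2$ telescopes to
\begin{eqnarray*}
	\tilde\kappa=\f{(1-\Delta)+(1-\check\Delta)\Delta}{b^2}=\f{1-\check\Delta\Delta}{b^2},
\end{eqnarray*}
which is exactly the $\kappa$ associated to the composite factor $\tilde\Delta=\check\Delta\Delta$. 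The $1$-form is immediate: $\bar b_i=\check\nu\,\check b_i=\check\nu\nu\,\bi$, so $\nu$ composes multiplicatively, matching the third claimed factor.

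It remains to address the only real subtlety, namely the functional dependence, which I expect to be the main point requiring care rather than any computation. The quantities $\check\Delta$, $\check\rho$, $\check\nu$ are functions of $\check b^2$, not of $b^2$; for the composite to qualify as a genuine $\b$-deformation of $(\a,\b)$ its three factors must depend on $b^2$ alone. This holds precisely because the first deformation renders $\check b^2=e^{-2\rho(b^2)}\nu(b^2)^2 b^2/\Delta(b^2)$ a function of $b^2$, so $\check\Delta(\check b^2)$, $\check\rho(\check b^2)$, $\check\nu(\check b^2)$ are all composite functions of $b^2$ and the triple $\big(\check\Delta(\check b^2)\Delta(b^2),\ \check\rho(\check b^2)+\rho(b^2),\ \check\nu(\check b^2)\nu(b^2)\big)$ is well defined as a function of $b^2$; the product $\tilde\Delta=\check\Delta\Delta$ is moreover nonzero, as required. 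The essential care is to keep the two arguments $b^2$ and $\check b^2$ rigorously separate and to confirm that no residual dependence on $y$ or on the base point survives. As a consistency check I would also verify that the composite triple reproduces the correct final length through $\bar b^2=e^{-2\tilde\rho}\tilde\nu^2 b^2/\tilde\Delta$, which matches the iterated value $e^{-2\check\rho}\check\nu^2\check b^2/\check\Delta$; this is automatic once $\bar a_{ij}$ and $\bar b_i$ agree, but it provides a reassuring sanity check.
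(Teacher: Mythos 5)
Your proposal is correct: the telescoping computation $\tilde\kappa=\kappa+e^{-2\rho}\check\kappa\,\nu^2=(1-\check\Delta\Delta)/b^2$, together with the multiplicativity of $\nu$ and additivity of $\rho$, is exactly the direct verification behind this lemma, which the paper states without proof as a basic fact. Your attention to the functional-dependence point (that $\check b^2$ is itself a function of $b^2$, so the composite factors are genuine functions of $b^2$) is the one subtlety worth recording, and you handle it properly.
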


\begin{figure}[h]
	\centering
	$\xymatrix{
		& (\check\alpha,\check\beta) \ar[dr]^{(\check\Delta,\check\rho,\check\nu)}             &\\
		(\alpha,\beta) \ar[ur]^{(\Delta,\rho,\nu)} \ar[rr]^{(\Delta\check\Delta,\rho+\check\rho,\nu\check\nu)} & &     (\ba,\bb)}$
	\caption{Composition of two $\b$-deformations}
\end{figure}
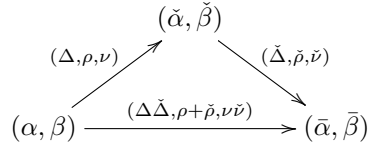

Next, there are three deformation formulae about non curvature tensors. The former two formulae are restatement of Lemma 2, 3, and 4 in \cite{yct-dhfp}, and the latter one is a direct inference of the former. More deformation formulae about curvature tensor $\RIk$, $\Rbb$ and $\Ricoo$ are listed in Appendix. All of them are very complicated. Any minor mistake will lead to disastrous results. In order to make sure the absolute accuracy, all the calculations are finished by Maple program and the results have been rigorously verified by using different known examples.

\begin{proposition}\label{relationunderdeformations}
	After $\b$-deformation,
	\begin{eqnarray*}
		\brij&=&\f{2\rho'\nu}{1-b^2\kappa}r(\aij-\kappa\bi\bj)-\f{\kappa'\nu}{1-b^2\kappa}r\bi\bj+\f{\nu}{1-b^2\kappa}\rij\\
		&&+\left(\f{b^2\kappa'\nu}{1-b^2\kappa}-2\rho'\nu+\nu'\right)\left\{\bi(\rj+\sj)+\bj(\ri+\si)\right\}
		+\f{\kappa\nu}{1-b^2\kappa}(\bi\sj+\bj\si).
	\end{eqnarray*}
\end{proposition}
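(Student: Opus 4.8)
The plan is to obtain $\brij$ directly from its definition as the symmetrization of the covariant derivative of $\bbi=\nu\bi$ with respect to the Levi-Civita connection of $\ba$, pushing everything back to tensors built from $\a$ and $\b$. Writing $\nabla$ for the connection of $\a$, and $\bGkij$ for the Christoffel symbols of $\ba$, the first step is to expand $\bbij=\partial_j(\nu\bi)-\bGkij\nu\bk$. Since $\nu$, $\rho$, $\kappa$ are functions of $b^2$ alone, the chain rule turns each of their derivatives into a multiple of $\partial_j(b^2)$, and from $b^2=\bi\bI$ together with $\bI\rij=\rj$, $\bI\sij=\sj$ one gets $\partial_j(b^2)=2\bI b_{i|j}=2(\rj+\sj)$. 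Separating off the undeformed connection through the difference tensor $C^k_{ij}:=\bGkij-\Gkij$, this yields $\bbij=2\nu'(\rj+\sj)\bi+\nu\bij-\nu C^k_{ij}\bk$, so the whole problem reduces to computing the single contraction $C^k_{ij}\bk$.

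For that I would use the Koszul-type identity $C^k_{ij}=\frac12\bar a^{kl}\big(\nabla_i\bar a_{jl}+\nabla_j\bar a_{il}-\nabla_l\bar a_{ij}\big)$, contract it with $\bk$, and exploit the identity $\bar a^{kl}\bk=\frac{e^{-2\rho}}{1-b^2\kappa}b^l$, which follows at once from the formula for $\bar a^{ij}$ recorded above; the two conformal factors $e^{\pm2\rho}$ then cancel. The three contractions $b^l\nabla_i\bar a_{jl}$, $b^l\nabla_j\bar a_{il}$, $b^l\nabla_l\bar a_{ij}$ are computed from $\bar a_{jl}=e^{2\rho}(a_{jl}-\kappa\bj b_l)$, using $\nabla_i a_{jl}=0$ and $\nabla_i(\bj b_l)=b_{j|i}b_l+\bj b_{l|i}$. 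The only delicate point here is the sign bookkeeping of the contractions against $b^l$: the symmetric part gives $b^l\rij=\ri$ and $b^l r_l=r$, while the antisymmetric part gives $b^l s_{il}=-\si$ and $b^l s_l=0$, so the $\si$-terms enter with precisely the signs that ultimately produce the surviving mixed term.

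Finally I would symmetrize, $\brij=\frac12(\bbij+\bar b_{j|i})$, noting that $C^k_{ij}\bk$ is already symmetric in $i,j$ so that the antisymmetric $\sij$ coming from $\nu\bij$ drops out of the $\rij$-coefficient, leaving $\nu\rij$; assembling the remaining pieces over the common denominator $1-b^2\kappa$ then gives the claimed expression. The main obstacle is purely organizational: the intermediate computation produces a large number of terms, and the result hinges on several coefficient collapses — most notably that the coefficient of $\rij$, namely $\nu\big(1+\frac{\kappa b^2}{1-b^2\kappa}\big)$, simplifies to $\frac{\nu}{1-b^2\kappa}$, that the fully contracted quantity $r$ appears only through the conformal block $\frac{2\rho'\nu}{1-b^2\kappa}r(\aij-\kappa\bi\bj)$, and that all the $\bi\sj$-type contributions conspire to leave exactly $\frac{\kappa\nu}{1-b^2\kappa}(\bi\sj+\bj\si)$.
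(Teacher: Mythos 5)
Your proposal is correct: I checked the computation it outlines and it does reproduce the stated formula. Writing $C^k_{ij}:=\bGkij-\Gkij$, the Koszul-type identity together with $\bar a^{kl}\bk=\f{e^{-2\rho}}{1-b^2\kappa}b^l$ gives, after the contractions you describe,
\begin{eqnarray*}
	C^k_{ij}\bk&=&\f{1}{1-b^2\kappa}\Big[\big(2\rho'(1-b^2\kappa)-b^2\kappa'\big)\big\{\bi(\rj+\sj)+\bj(\ri+\si)\big\}
	-\kappa b^2\rij\\
	&&-\kappa(\bi\sj+\bj\si)-2\rho' r(\aij-\kappa\bi\bj)+\kappa' r\,\bi\bj\Big],
\end{eqnarray*}
and substituting this into $\brij=\nu'\big\{\bi(\rj+\sj)+\bj(\ri+\si)\big\}+\nu\rij-\nu C^k_{ij}\bk$ yields exactly the coefficients of the Proposition; in particular the three collapses you flag (the $\rij$-coefficient $\nu+\f{\nu\kappa b^2}{1-b^2\kappa}=\f{\nu}{1-b^2\kappa}$, the sign-sensitive cancellation $-\kappa\big\{\bi(\rj+\sj)+\bj(\ri+\si)\big\}+\kappa\big\{(\ri-\si)\bj+(\rj-\sj)\bi\big\}=-2\kappa(\bi\sj+\bj\si)$, and the conformal block in $r$) all occur as claimed. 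However, your route is genuinely different from the paper's: the paper gives no proof of this Proposition at all, but quotes it as a restatement of Lemmas 2--4 of the author's earlier work \cite{yct-dhfp}, where the formula is obtained by factoring the $\b$-deformation into three elementary steps (stretch, conformal, and length deformation), computing the change of $\bij$ under each step separately, and composing via what is here Lemma \ref{bbchange}. That decomposition keeps each individual computation short and reusable (the same elementary formulae feed the curvature computations in the Appendix), at the price of requiring the composition machinery; your direct one-shot computation via the difference tensor is self-contained, uses nothing beyond the Levi-Civita Koszul formula, and makes the origin of each term in the final expression transparent, at the price of a single larger bookkeeping step.
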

\begin{remark}
	Notice that $\brij$ is the related tensor about the resulting data $(\ba,\bb)$. The same for the tensors $\bsij$, $\bar t_{ij}$, etc. below.
\end{remark}

\begin{proposition}\label{sounderdeformations}
	After $\b$-deformation,
	\begin{eqnarray*}
		\bsij&=&\nu\sij+\nu'\left\{\bi(\rj+\sj)-\bj(\ri+\si)\right\},\\
		\bsi&=&-\f{e^{-2\rho}\nu}{1-b^2\kappa}\left\{\nu'r\bi-b^2\nu'\ri-(\nu+b^2\nu')\si\right\}.
	\end{eqnarray*}
\end{proposition}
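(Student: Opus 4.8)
The plan is to treat the two identities separately: the formula for $\bsij$ is the antisymmetric companion of the expression for $\brij$ in Proposition \ref{relationunderdeformations}, while the formula for $\bsi$ is then a purely algebraic consequence obtained from it by contraction. Throughout I would take the general $\b$-deformation machinery as given, in particular the expressions for $\baij$, $\bar a^{ij}$ and $\bar b^i$ displayed above.

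For the first identity, the key observation is that $\bsij$ is the antisymmetric part of the $\ba$-covariant derivative of $\bb=\nu\b$, and that the antisymmetric part of the covariant derivative of any $1$-form coincides (up to sign) with its exterior derivative, which is connection-independent. Hence the Christoffel correction $\bGkij\,\bar b_k$ — the genuinely painful object in the symmetric case — drops out entirely, leaving $2\bsij=\partial_j(\nu\bi)-\partial_i(\nu\bj)$. Applying the Leibniz rule, I would then use two elementary facts: since $b^2$ is a scalar, $\partial_j\nu=\nu'\,\partial_j(b^2)=2\nu'(\rj+\sj)$, because $\nabla_j b^2=2b^m b_{m|j}=2(\rj+\sj)$; and $\partial_j\bi-\partial_i\bj=\bij-b_{j|i}=2\sij$. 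Substituting and collecting yields $\bsij=\nu\sij+\nu'\{\bi(\rj+\sj)-\bj(\ri+\si)\}$ at once.

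The second identity follows by contracting the first with $\bar b^m$, using $\bsi=\bar b^m\bar s_{mi}$ together with $\bar b^m=\frac{e^{-2\rho}\nu}{1-b^2\kappa}b^m$. The required contraction identities are $b^m s_{mi}=\si$, $b^m b_m=b^2$, $b^m r_m=r$, and crucially $b^m s_m=0$ (the last because $s_m=b^k s_{km}$ while $\sij$ is antisymmetric). After these substitutions the coefficient of $\bi$ becomes $-\nu' r$, that of $\ri$ becomes $b^2\nu'$, and that of $\si$ becomes $\nu+b^2\nu'$, which reproduces the stated formula after factoring out $\frac{e^{-2\rho}\nu}{1-b^2\kappa}$ and an overall sign.

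Because antisymmetrization eliminates the Christoffel difference, there is no genuine analytic obstacle in this proposition: unlike $\brij$, whose derivation needs the full connection-deformation computation, the whole of Proposition \ref{sounderdeformations} reduces to careful index bookkeeping. The only point demanding real attention is the asymmetry between $r$- and $s$-type contractions — specifically remembering that $b^m s_m=0$ whereas $b^m r_m=r\neq 0$ — which is exactly what leaves the single surviving $\nu' r\,\bi$ term.
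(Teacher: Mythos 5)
Your proof is correct, but it is worth noting that the paper itself offers no proof of this proposition at all: it is stated as a restatement of Lemmas 2--4 of \cite{yct-dhfp}, with the surrounding text explaining that all such deformation formulae were generated and verified by a Maple program. So your argument is a genuinely different (and self-contained) route. Its two key steps both check out against the paper's conventions: since the Levi-Civita connection of $\ba$ is symmetric, the Christoffel terms cancel in the antisymmetrization, so $2\bsij=\partial_j\bar b_i-\partial_i\bar b_j$; then the Leibniz rule together with $\partial_j(b^2)=2b^mb_{m|j}=2(\rj+\sj)$ and $\partial_j\bi-\partial_i\bj=\bij-b_{j|i}=2\sij$ gives exactly $\bsij=\nu\sij+\nu'\{\bi(\rj+\sj)-\bj(\ri+\si)\}$. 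The contraction step is also sound: with $\bar b^m=\frac{e^{-2\rho}\nu}{1-b^2\kappa}b^m$ and the identities $b^ms_{mi}=\si$, $b^mb_m=b^2$, $b^m\rk[m]$-type contractions $b^mr_m=r$ and $b^ms_m=0$ (antisymmetry of $\sij$), one gets $\bsi=\frac{e^{-2\rho}\nu}{1-b^2\kappa}\{(\nu+b^2\nu')\si+b^2\nu'\ri-\nu' r\bi\}$, which is the stated formula up to factoring out the overall sign. What your approach buys is transparency: it isolates exactly why this proposition is elementary while Proposition \ref{relationunderdeformations} is not --- the symmetric part $\brij$ requires the full deformed connection, whereas the antisymmetric part is connection-independent --- something the paper's citation-plus-computer-algebra treatment leaves invisible. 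What the paper's route buys is uniformity: the same machine-verified pipeline produces all the curvature-level formulae (Propositions \ref{RIkunderdeformaion}--\ref{Ricoounderdeformation}), for which no such shortcut exists. (One small slip in your write-up: the phrase ``$b^m\rk[m]$-type'' above should simply read $b^mr_m=r$; your text states this correctly.)
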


\begin{proposition}\label{toounderdeformaion}
	After $\b$-deformation,
	\begin{eqnarray*}
		\bar t_{00}&=&c_1\ro^2+c_2\ro\so+c_3\so^2+c_4\too+c_5\b\ro r+c_6\b\so r+c_7\b(\qo+\to)+c_8\b^2r^2\\
		&&+c_9\b^2(p-2q-t),
	\end{eqnarray*}
	where
	\begin{eqnarray*}
		&c_{1}=-\f{b^2e^{-2\rho}}{1-b^2\kappa}\nu'^2,\quad c_{2}=-\f{2e^{-2\rho}}{1-b^2\kappa}(\nu+b^2\nu')\nu',\quad
		c_{3}=-\f{e^{-2\rho}}{1-b^2\kappa}(\kappa\nu^2+2\nu\nu'+b^2\nu'^2),\\
		&c_{4}=e^{-2\rho}\nu^2,\quad c_{5}=\f{2e^{-2\rho}}{1-b^2\kappa}\nu'^2,\quad
		c_{6}=\f{2e^{-2\rho}}{1-b^2\kappa}(\kappa\nu+\nu')\nu',\\
		&c_{7}=2e^{-2\rho}\nu\nu',\quad
		c_{8}=-\f{\kappa e^{-2\rho}}{1-b^2\kappa}\nu'^2,\quad
		c_{9}=-e^{-2\rho}\nu'^2.
	\end{eqnarray*}
\end{proposition}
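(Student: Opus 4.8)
The plan is to derive everything from the single formula for $\bsij$ in Proposition \ref{sounderdeformations}, since the definition $t_{ij}=s_{im}s^m{}_j$ gives, after contracting both free indices with $y$,
\begin{eqnarray*}
	\btoo=\bar s_{0m}\,\bar s^m{}_0=\bar a^{mp}\,\bar s_{0m}\,\bar s_{p0}.
\end{eqnarray*}
First I would contract Proposition \ref{sounderdeformations} with $y^j$ (and separately with $y^i$, using the antisymmetry) to obtain
\begin{eqnarray*}
	\bar s_{0m}&=&\nu\,s_{0m}+\nu'\b(r_m+s_m)-\nu' b_m(\ro+\so),\\
	\bar s_{p0}&=&\nu\,s_{p0}+\nu' b_p(\ro+\so)-\nu'\b(r_p+s_p),
\end{eqnarray*}
and then substitute these together with the inverse-metric expansion $\bar a^{mp}=e^{-2\rho}\big(a^{mp}+\f{\kappa}{1-b^2\kappa}b^mb^p\big)$ recorded at the start of this section. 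From here the whole computation is just expanding the product and reducing each contraction.

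Next I would split $\bar a^{mp}$ into its two pieces and treat them separately. For the $a^{mp}$-piece every contraction collapses to one of the previously named scalars by the raising/lowering rules, $a^{mp}s_{0m}s_{p0}=\too$, $a^{mp}b_p=b^m$, $b^mb_m=b^2$, combined with the antisymmetry of $\sij$, which forces $b^m s_{0m}=-\so$, $a^{mp}s_{0m}r_p=-\qo$, $a^{mp}s_{0m}s_p=-\to$, and $b^m(r_m+s_m)=r$. For the $b^mb^p$-piece one only needs the single contracted quantity $b^m\bar s_{0m}=-\nu\so+\nu'\b r-\nu' b^2(\ro+\so)$, whose square (up to a sign, since $b^p\bar s_{p0}=-b^m\bar s_{0m}$) supplies all of its contributions. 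Carrying this out produces exactly the nine monomials $\ro^2,\ \ro\so,\ \so^2,\ \too,\ \b\ro r,\ \b\so r,\ \b(\qo+\to),\ \b^2r^2,\ \b^2(p-2q-t)$ in the statement.

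Collecting coefficients, the $a^{mp}$-piece and the $b^mb^p$-piece combine through the elementary identity $1+\f{\kappa b^2}{1-b^2\kappa}=\f{1}{1-b^2\kappa}$, which is precisely what converts the bare factors coming from $a^{mp}$ into the denominators $1-b^2\kappa$ displayed in $c_1,\dots,c_8$; the lone term $c_4\too$ receives no $b^mb^p$-contribution and keeps the clean coefficient $e^{-2\rho}\nu^2$. The hard part will not be tallying the nine coefficients but the relentless sign discipline forced by the antisymmetry of $\sij$: it is exactly these signs that turn $a^{mp}(r_m+s_m)(r_p+s_p)$ into $p-2q-t$ rather than $p+2q+t$ (because $a^{mp}r_ms_p=-q$ and $a^{mp}s_ms_p=-t$ while $a^{mp}r_mr_p=p$), and that decide whether the mixed terms assemble into $\qo+\to$ or cancel. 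Once each contraction is signed correctly the result follows purely mechanically, which is why it is a direct inference of Proposition \ref{sounderdeformations}.
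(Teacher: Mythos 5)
Your proposal is correct and is essentially the paper's own route: the paper explicitly treats Proposition \ref{toounderdeformaion} as ``a direct inference'' of Proposition \ref{sounderdeformations} (together with the inverse-metric formula $\bar a^{ij}=e^{-2\rho}\big(a^{ij}+\f{\kappa}{1-b^2\kappa}b^ib^j\big)$), which is exactly the contraction $\btoo=\bar a^{mp}\bar s_{0m}\bar s_{p0}$ you carry out, and all of your individual contraction identities and signs (e.g. $b^ms_{0m}=-\so$, $a^{mp}s_{0m}r_p=-\qo$, $a^{mp}(r_m+s_m)(r_p+s_p)=p-2q-t$, and $1+\f{\kappa b^2}{1-b^2\kappa}=\f{1}{1-b^2\kappa}$) check out and reproduce the nine coefficients $c_1,\dots,c_9$ exactly.
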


A $1$-form will be called a {\it Killing form} or {\it conformal form} of a given Riemannian metric if its dual vector field is a Killing or conformal vector field respectively. A conclusion about Killing forms under $\b$-deformations is provided below.

\begin{theorem}\label{KillingtoKillingu}
	Assume $\b$ is a Killing form of $\a$ with non-constant length. Then after $\b$-deformation, $\bb$ is conformal with respect to $\ba$ if and only if
	\begin{eqnarray*}
		\nu=k(1-b^2\kappa)e^{2\rho},
	\end{eqnarray*}
	where $k$ is a non-zero constant. Moreover, $\bb$ must be also a Killing form of $\ba$.
\end{theorem}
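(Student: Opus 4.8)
The plan is to push the Killing hypothesis through Proposition \ref{relationunderdeformations}, collapse $\brij$ to a single scalar function of $b^2$ times a fixed tensor, and then observe that the conformal condition forces that scalar to obey a separable first-order ODE whose general solution is exactly the claimed relation. Throughout, $'$ denotes differentiation in $b^2$.

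First I would record the consequences of $\b$ being a Killing form: $\rij=0$, whence $\ri=b^m\rij=0$ and $r=\ri\bI=0$. Feeding these into Proposition \ref{relationunderdeformations} annihilates the first three summands and reduces each $(\rj+\sj)$, $(\ri+\si)$ to $\sj$, $\si$, leaving
\begin{eqnarray*}
\brij=\left(\f{b^2\kappa'\nu}{1-b^2\kappa}+\f{\kappa\nu}{1-b^2\kappa}-2\rho'\nu+\nu'\right)(\bi\sj+\bj\si).
\end{eqnarray*}
Setting $\Delta=1-b^2\kappa$ and using $b^2\kappa'+\kappa=(b^2\kappa)'=-\Delta'$, the scalar coefficient simplifies to $C:=\nu'-2\rho'\nu-\tfrac{\Delta'}{\Delta}\nu$, so that $\brij=C\,(\bi\sj+\bj\si)$. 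The entire deformation of $\rij$ is thus carried by the single function $C(b^2)$ multiplying the rank-$\le2$ tensor $\bi\sj+\bj\si$.

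Next I would exploit two algebraic facts peculiar to a Killing form. By antisymmetry of $\sij$ one has $\bI\si=\bI\bK s_{ki}=0$, and since $\si=\tfrac12(b^2)_{|i}$ the field $\si$ is not identically zero precisely because $\|\b\|_\a$ is non-constant. The conformal condition for $\bb$ reads $\brij=\bar c\,\baij$ for some function $\bar c$; contracting with $\bar a^{ij}=e^{-2\rho}\!\left(a^{ij}+\tfrac{\kappa}{\Delta}\bI\bK\right)$ and invoking $\bI\si=0$ gives $n\bar c=2C\,\bar a^{ij}\bi\sj=0$, hence $\bar c=0$. Therefore the conformal condition is equivalent to $\brij=0$, i.e. $\bb$ is in fact Killing — which already settles the ``Moreover'' clause. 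Since $\bI(\bi\sj+\bj\si)=b^2\si\neq0$ the tensor $\bi\sj+\bj\si$ does not vanish, so $\brij=0$ forces $C=0$.

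Finally I would integrate $C=0$: dividing by $\nu$ turns it into $\tfrac{\nu'}{\nu}=2\rho'+\tfrac{\Delta'}{\Delta}$, with primitive $\log\nu=2\rho+\log\Delta+\text{const}$, i.e. $\nu=k(1-b^2\kappa)e^{2\rho}$ with $k\neq0$ (nonzero since $\nu$ must be nonzero for a genuine deformation). The converse is immediate: this relation makes $C=0$, hence $\brij=0$, so $\bb$ is Killing and a fortiori conformal. I expect the only delicate point to be the passage from ``conformal'' to $C=0$: one must verify that the conformal factor genuinely vanishes (this is exactly where $\bI\si=0$ is used) and that $\bi\sj+\bj\si\neq0$ (this is where the non-constant length of $\b$ is essential); granting these, the remaining ODE integration is routine.
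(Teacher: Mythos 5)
Your proposal is correct and follows essentially the same route as the paper's own proof: both feed the Killing hypothesis ($\rij=0$, $\ri=0$, $r=0$) into Proposition \ref{relationunderdeformations} to collapse $\brij$ to a single scalar function of $b^2$ times $\bi\sj+\bj\si$, use the non-constant length of $\b$ to ensure this tensor does not vanish, and integrate the resulting first-order ODE to obtain $\nu=k(1-b^2\kappa)e^{2\rho}$. Your explicit trace argument --- contracting the conformal condition with $\bar a^{ij}$ and using $b^i\si=0$ to show the conformal factor must vanish, so that conformal is equivalent to Killing --- is a step the paper's proof passes over silently, so your write-up is if anything slightly more complete than the original.
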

\begin{proof}
	Since $\b$ is a Killing form, by Proposition \ref{relationunderdeformations} we have
	\begin{eqnarray*}
		\brij=\left\{\f{\nu(\kappa+b^2\kappa')}{1-b^2\kappa}-2\nu\rho'+\nu'\right\}(\bi\sj+\bj\si).
	\end{eqnarray*}
	It is easy to see that $(b^2)_{|i}=2(\ri+\si)$, which indicates that $\so\neq0$ since $\b$ has non-constant length. Hence, $\bb$ is a Killing form, namely $\brij=0$, if and only if
	\begin{eqnarray*}
		\f{\nu(\kappa+b^2\kappa')}{1-b^2\kappa}-2\nu\rho'+\nu'=0,
	\end{eqnarray*}
	by which we have $\nu=k(1-b^2\kappa)e^{2\rho}$ for some constant $k$.\qed
\end{proof}

\section{Deformation with one additional condition on the Killing form}\label{diamdinaihgng}
Due to Ricci identity, we have a formula between the curvature tensor and some non curvature tensors about the covariant derivation of an arbitrary $1$-form as below,
\begin{eqnarray}\label{relation1}
	s_{ij|k}=-b^m R_{kmij}+r_{ik|j}-r_{jk|i},
\end{eqnarray}
Such formula is priori and plays a crucial role in the research of classifying Randers metrics with constant flag curvature\cite{db-robl-orso,db-robl-szm-zerm}(see also \cite{xy} for a summarization and simplification).

If $\b$ is a Killing form, (\ref{relation1}) implies the following facts,
\begin{eqnarray}
	\sohb=-t_{0},\qquad\sIohi=\Rico,\qquad\sIhi=-\Ric-\tIi.\label{relation}
\end{eqnarray}
As a reslut, combining (\ref{relation}) with Proposition \ref{Ricoounderdeformation} in Appendix, we can understand the behavior of Ricci curvature under $\b$-deformations when $\b$ is a Killing form.
\begin{lemma}\label{RicoounderdeformationKilling}
	Assume $\b$ is a Killing form of $\a$. Then after $\b$-deformation,
	\begin{eqnarray*}
		\bRicoo&=&\big\{C_1(\Ric+\tIi)+C_2 t\big\}(\a^2-\kappa\b^2)+(C_3\Ric+C_4\tIi+C_5 t)\b^2+C_6\so^2\\
		&&+C_7\too+C_8\b\to+C_9\soho+\Ricoo+C_{10}\b\Rico,
	\end{eqnarray*}
	where
	\begin{eqnarray*}
		C_1&=&\textstyle -\f{1}{b^2}+\f{1}{b^2}(1+2b^2\rho'),\nonumber\\
		C_2&=&\textstyle \f{n}{b^4}-\f{\Delta'}{b^2\Delta}-\f{1}{b^4\Delta}\big\{2(n-1)\Delta-b^2\Delta'\big\}(1+2b^2\rho')+\f{n-2}{b^4}(1+2b^2\rho')^2+\f{2}{b^2}(1+2b^2\rho')',\nonumber\\
		C_3&=&\textstyle -\f{1}{b^4}(\Delta-1-b^2\Delta'),\nonumber\\
		C_4&=&\textstyle -\f{1}{b^4}(\Delta^2-\Delta-b^2\Delta'),\nonumber\\
		C_5&=&\textstyle \f{1}{b^6}(\Delta-1)(2\Delta+n)-\f{1}{b^4\Delta}\big\{(n+3)\Delta-1\big\}\Delta'
		-\f{\Delta'^2}{b^2\Delta}+\f{2\Delta''}{b^2}-\f{n-2}{b^6}(\Delta-1-b^2\Delta')\\
		&&\textstyle\cdot(1+2b^2\rho'),\nonumber\\
		C_6&=&\textstyle -\f{n}{b^4}+\f{2\Delta}{b^4}-\f{2\Delta'}{b^2\Delta}+\f{\Delta'^2}{\Delta^2}-\f{2\Delta''}{\Delta}
		+\f{n-2}{b^4}(1+2b^2\rho')^2-\f{2(n-2)}{b^2}(1+2b^2\rho')',\nonumber\\
		C_7&=&\textstyle \f{2}{b^2}(\Delta-1),\nonumber\\
		C_8&=&\textstyle -\f{2n}{b^4}(\Delta-1)+\f{2(3\Delta-1)\Delta'}{b^2\Delta}+\f{2(n-2)}{b^4}(\Delta-1)(1+2b^2\rho'),\nonumber\\
		C_9&=&\textstyle \f{n-2}{b^2}-\f{\Delta'}{\Delta}-\f{n-2}{b^2}(1+2b^2\rho'),\nonumber\\
		C_{10}&=&\textstyle \f{2}{b^2}(\Delta-1).
	\end{eqnarray*}
\end{lemma}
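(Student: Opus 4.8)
The plan is to specialize the general deformation formula for $\Ricoo$, namely Proposition \ref{Ricoounderdeformation} in the Appendix, to a Killing form and then reorganize the outcome into the three structural blocks displayed in the statement. Since a Killing form satisfies $b_{i|j}+b_{j|i}=0$, the symmetric part of its covariant derivative vanishes, i.e. $\rij=0$. Consequently every tensor manufactured from $r$ collapses: $\ro=0$, $\roo=0$, $r=0$, and also $\poo=0$ (because $p_{ij}=r_{im}r^m{}_j$) together with $\qoo=0$, $\qo=0$, $p=0$, $q=0$ (because $q_{ij}=r_{im}s^m{}_j$). This single observation annihilates the overwhelming majority of the terms in the formidable Appendix formula, leaving only contributions built from $s$ and from the deformation factors.

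A quick sanity remark is that the length factor $\nu$ cannot survive in the answer: $\bRicoo$ is a curvature invariant of the metric $\ba=e^{\rho}\sqrt{\a^2-\kappa\b^2}$ alone, which depends only on $\rho$ and $\kappa$ (equivalently on $\Delta$), never on how we rescale $\b$. This is consistent with the absence of $\nu$ among the coefficients $C_1,\dots,C_{10}$, and it explains why, unlike the tensor $\bar t_{00}$ of Proposition \ref{toounderdeformaion}, no $\nu$-dependence appears here.

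Next I would dispose of the surviving covariant-derivative terms. After setting $\rij=0$ the only derivatives remaining stem from $s$; the contracted and $b$-contracted ones are exactly those governed by the Ricci identity (\ref{relation1}), and I would replace them through (\ref{relation}): $\sohb=-\to$, $\sIohi=\Rico$, and $\sIhi=-\Ric-\tIi$. This is the step at which the curvature of $\a$ enters the picture, converting second covariant derivatives of $\b$ into the Ricci data $\Ric$, $\Rico$ and the tensor $\tIi$, and producing the terms carried by $C_3$, $C_4$, $C_8$ and $C_{10}$. The one fully $y$-contracted derivative $\soho$ admits no such reduction and is left intact as the $C_9$ term.

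The final step is purely organizational: change the variable from $\kappa$ to $\Delta=1-b^2\kappa$, so that $\kappa$, $\kappa'$, $\kappa''$ are re-expressed through $\Delta$, $\Delta'$, $\Delta''$, and then sort every monomial into the three groups — the part proportional to $\a^2-\kappa\b^2=e^{-2\rho}\ba^2$, the part proportional to $\b^2$, and the remainder $\so^2,\too,\b\to,\soho,\Ricoo,\b\Rico$. Reading off the coefficient of each monomial yields $C_1,\dots,C_{10}$ in the stated forms. The grouping is not arbitrary: the coefficient of $\a^2-\kappa\b^2$ is precisely the combination that the Einstein condition of Section \ref{diamdinaihgng} forces to be proportional to $(n-1)\bar\mu$, so it is worth isolating from the start. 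The genuine difficulty throughout is not conceptual but the sheer size and bookkeeping of the Appendix expression, where the passage to the $\Delta$-variable reshuffles the derivatives of $\kappa$ into the intricate combinations visible in $C_2$, $C_5$, $C_6$ and $C_8$; I would guard against slips by checking the identity deformation $\Delta\equiv1$, $\rho\equiv0$, which must force every $C_i$ to vanish and return $\bRicoo=\Ricoo$, and by re-deriving a known constant-curvature case, exactly the kind of cross-check the paper reports performing by Maple.
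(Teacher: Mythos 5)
Your proposal is correct and takes essentially the same route as the paper: the paper derives this lemma precisely by specializing Proposition \ref{Ricoounderdeformation} to the Killing case $\rij=0$ (which kills all $r$-, $p$-, $q$-built terms), substituting the Ricci-identity consequences (\ref{relation}) for $\sohb$, $\sIohi$, $\sIhi$, and rewriting $\kappa$, $\kappa'$, $\kappa''$ in terms of $\Delta=1-b^2\kappa$ before regrouping along $\a^2-\kappa\b^2$ and $\b^2$. Your extra sanity checks ($\nu$-independence and the identity deformation $\Delta\equiv1$, $\rho\equiv0$) are sound additions, not deviations.
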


Assume $\a$ has constant Ricci curvature $\mu$, then
\begin{eqnarray*}
	\Ricoo=(n-1)\mu\a^2,\qquad\Rico=(n-1)\mu\b,\qquad\Ric=(n-1)\mu b^2.
\end{eqnarray*}
After $\b$-deformation, the metric $\ba$ also has constant Ricci curvature $\bar\mu$ if and only if $\bRicoo=(n-1)\bar\mu\ba^2$. Combining with Lemma \ref{RicoounderdeformationKilling}, we immediately have
\begin{theorem}\label{dinqienginadng}
	Assume $\a$ is an Einstein metric with Ricci constant $\mu$, $\b$ is a Killing form of $\a$. Then after $\b$-deformation, $\ba$ is an Einstein metric with Ricci constant $\bar\mu$ if and only if
	\begin{eqnarray}\label{ybaeugald}
		E_1\a^2+E_2\b^2+E_3\so^2+E_4\too+E_5\b\to+E_6\soho=0,
	\end{eqnarray}
	where
	\begin{eqnarray*}
		E_1&=&\textstyle 2\rho'\tIi+2\left\{\f{\Delta'}{\Delta}\rho'+2(n-2)\rho'^2+2\rho''\right\}t+(n-1)\mu(1+2b^2\rho')-(n-1)\bar\mu e^{2\rho},\nonumber\\
		E_2&=&\textstyle -\f{1}{b^4}\left\{\Delta^2-1-b^2\Delta'-(\Delta-1)(1+2b^2\rho')\right\}\tIi
		+\f{1}{b^6\Delta}\big\{2\Delta(\Delta^2-1)-b^2(5\Delta\nonumber\\
		&&\textstyle -1)\Delta'-b^4\Delta'^2+2b^4\Delta\Delta''-2(n-2)b^2\Delta(\Delta-1)\rho'
		+2b^4[(n-1)\Delta-1]\Delta'\rho'\nonumber\\
		&&\textstyle +4(n-2)b^4\Delta(\Delta-1)\rho'^2+4b^4\Delta(\Delta-1)\rho''\big\}t+\f{n-1}{b^2}\mu\big\{b^2\Delta'+(\Delta-1)(1\nonumber\\
		&&\textstyle +2b^2\rho')\big\}-\f{n-1}{b^2}\bar\mu(\Delta-1)e^{2\rho},\nonumber\\
		E_3&=&\textstyle \f{2}{b^4}(\Delta-1)-\f{2\Delta'}{b^2\Delta}+\f{\Delta'^2}{\Delta^2}-\f{2}{\Delta}\Delta''+4(n-2)(\rho'^2-\rho''),\nonumber\\
		E_4&=&\textstyle \f{2}{b^2}(\Delta-1),\nonumber\\
		E_5&=&\textstyle -\f{4}{b^4}(\Delta-1)+\f{2}{b^2\Delta}(3\Delta-1)\Delta'+\f{4}{b^2}(n-2)(\Delta-1)\rho',\nonumber\\
		E_6&=&\textstyle -\f{\Delta'}{\Delta}-2(n-2)\rho'.
	\end{eqnarray*}
\end{theorem}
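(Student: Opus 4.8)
The plan is to read the result directly off Lemma~\ref{RicoounderdeformationKilling}, which already expresses $\bRicoo$ for a Killing form $\b$ as a combination of the $C_i$ with the scalars $\Ric$, $\tIi$, $t$, $\so^2$, $\too$, $\b\to$, $\soho$, $\Ricoo$ and $\b\Rico$. The only extra inputs are the Einstein hypothesis on $\a$ and the Einstein condition on $\ba$; after these are imposed the statement reduces to collecting coefficients of the quadratic forms in $y$.

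First I would substitute the Einstein data of $\a$, namely $\Ric=(n-1)\mu b^2$, $\Rico=(n-1)\mu\b$ and $\Ricoo=(n-1)\mu\a^2$, into the formula of Lemma~\ref{RicoounderdeformationKilling}. This turns the three curvature scalars of $\a$ into explicit multiples of $\mu$; in particular $C_{10}\b\Rico=(n-1)\mu C_{10}\b^2$, while $\Ricoo$ contributes the bare term $(n-1)\mu\a^2$. Next I would impose that $\ba$ is Einstein with Ricci constant $\bar\mu$, i.e.\ $\bRicoo=(n-1)\bar\mu\ba^2=(n-1)\bar\mu e^{2\rho}(\a^2-\kappa\b^2)$, and move everything to one side. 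Expanding the two occurrences of $(\a^2-\kappa\b^2)$ (the one multiplying $\{C_1(\Ric+\tIi)+C_2 t\}$ and the one coming from $\ba^2$) splits each such term into a part proportional to $\a^2$ and a part proportional to $\b^2$, while $\so^2$, $\too$, $\b\to$ and $\soho$ survive untouched.

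The remaining work is to collect the coefficients of $\a^2$, $\b^2$, $\so^2$, $\too$, $\b\to$ and $\soho$, and to re-express them through $\Delta=1-b^2\kappa$ (equivalently $\kappa=(1-\Delta)/b^2$). For the last four quantities this is immediate: each appears only once in Lemma~\ref{RicoounderdeformationKilling}, so $E_3=C_6$, $E_4=C_7$, $E_5=C_8$ and $E_6=C_9$, and a short simplification puts them into the stated form (e.g.\ $C_1=2\rho'$ after cancelling $-1/b^2$ against $1/b^2$). The coefficient $E_1$ of $\a^2$ assembles the $\a^2$-parts of $C_1(\Ric+\tIi)+C_2 t$ with $(n-1)\mu\a^2$ and $-(n-1)\bar\mu e^{2\rho}\a^2$; the crucial point here is the collapse of the unwieldy $C_2$ to $2\{\frac{\Delta'}{\Delta}\rho'+2(n-2)\rho'^2+2\rho''\}$, in which the $b^{-4}$ and $b^{-2}$ contributions cancel completely.

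The main obstacle will be the coefficient $E_2$ of $\b^2$. It gathers the $-\kappa\b^2$-parts of $C_1(\Ric+\tIi)+C_2 t$, the explicit terms $C_3\Ric+C_4\tIi+C_5 t$, the Killing contribution $(n-1)\mu C_{10}$, and the term $+\kappa(n-1)\bar\mu e^{2\rho}$ coming from $\ba^2$; substituting $\kappa=(1-\Delta)/b^2$ together with its derivatives and simplifying is by far the heaviest bookkeeping and exactly where an arithmetic slip would be costly. Once this is carried out and matched against the claimed $E_2$, the equivalence is complete: after the substitutions above, the Einstein condition $\bRicoo=(n-1)\bar\mu\ba^2$ is literally the vanishing of the quadratic form on the left of~(\ref{ybaeugald}), so $\ba$ is Einstein with constant $\bar\mu$ if and only if~(\ref{ybaeugald}) holds.
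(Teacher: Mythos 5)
Your proposal is correct and is exactly the paper's argument: the paper also derives the theorem by substituting $\Ricoo=(n-1)\mu\a^2$, $\Rico=(n-1)\mu\b$, $\Ric=(n-1)\mu b^2$ into Lemma \ref{RicoounderdeformationKilling}, imposing $\bRicoo=(n-1)\bar\mu e^{2\rho}(\a^2-\kappa\b^2)$, and collecting the coefficients of $\a^2$, $\b^2$, $\so^2$, $\too$, $\b\to$, $\soho$ in terms of $\Delta=1-b^2\kappa$ (the paper states this as an immediate consequence without writing out the bookkeeping). Your identifications $E_3=C_6$, $E_4=C_7$, $E_5=C_8$, $E_6=C_9$, the collapse $C_2=2\{\f{\Delta'}{\Delta}\rho'+2(n-2)\rho'^2+2\rho''\}$, and the assembly of $E_1$ and $E_2$ all check out.
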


If there is not any inner relationship between the tensors $\a^2$, $\b^2$, $\so$, $\too$, $\to$ and $\soho$, then (\ref{ybaeugald}) holds if and only if all the coefficients $E_i$ vanish, which implies $\kappa=0$ and $\rho$ is a constant. That is to say, $\ba$ is just a re-scaling of $\a$. It is trivial.

However, such tensors may be of some inner relationship for some particular Killing forms. For example, denote $x:=(x_1,x_2,\cdots,x_{2m})^T$, $y:=(y_1,y_2,\cdots,y_{2m})^T$, and a $2m\times 2m$ symplectic matrix $J$ as below
\begin{eqnarray*}
	J:=\begin{bmatrix}
		0 & 1 &  &  &  \\
		-1 & 0 &  &  &  \\
		&  & \ddots &  &  \\
		&  &  & 0 & 1 \\
		&  &  & -1 & 0
	\end{bmatrix},
\end{eqnarray*}
then
\begin{eqnarray}
	\a=|y|,\qquad \b=\langle Jx,y\rangle\label{inbadbgamgd}
\end{eqnarray}
satisfies
\begin{eqnarray*}
	\too=-\a^2.
\end{eqnarray*}

More generally, take
\begin{eqnarray}
	\a&=&\f{2|y|}{1+\mu|x|^2},\qquad\b=\f{4a\langle Jx,y\rangle}{(1+\mu|x|^2)^2}\label{inbadbgamgddanda},
\end{eqnarray}
where $a$ is a non-zero constant, then $\a$ has constant sectional curvature $\mu$ and $\b$ is a Killing form of $\a$ satisfying a special equality as below
\begin{eqnarray}
	\too+(a^2-\mu b^2)\a^2+\mu(b^2\a^2-\b^2)-\f{\mu}{a^2-\mu b^2}\so^2=0.\label{owemigneigaindn}
\end{eqnarray}
It is worth to remark that (\ref{inbadbgamgddanda}) can be reexpressed in a projective coordinate system as
\begin{eqnarray}
	\a&=&\f{\sqrt{(1+\mu|x|^2)|y|^2-\mu\langle x,y\rangle^2}}{1+\mu|x|^2},\qquad\b=\f{a\langle Jx,y\rangle}{1+\mu|x|^2}\label{inbadbgamgddandab},
\end{eqnarray}

\begin{remark}
	Be attend that $a^2-\mu b^2$ in (\ref{owemigneigaindn}) may be equal to $0$ at some points, but $\so$ is also equal to $0$ in this case. In particular, if $\a$ and $\b$ are given by ()\ref{inbadbgamgddanda}, then
	\begin{eqnarray*}
		a^2-\mu b^2=\f{a^2(1-\mu|x|^2)^2}{(1+\mu|x|^2)^2},\qquad\so=\f{4a^2(1-\mu|x|^2)\langle x,y\rangle}{(1+\mu|x|^2)^3}.
	\end{eqnarray*}
\end{remark}
When $\mu>0$, both $a^2-\mu b^2$ and $\so$ are equal to $0$ if $1-\mu|x|^2=0$(namely the equator on the sphere). In this case, the item $\f{\so^2}{a^2-\mu b^2}$ should be regard as its limit, i.e.,
\begin{eqnarray*}
	\f{\so^2}{a^2-\mu b^2}=\f{16a^2\langle x,y\rangle^2}{(1+\mu|x|^2)^4},
\end{eqnarray*}
which is well-defined on the whole sphere.

There are two facts should be pointed out before going any further.

Firstly, we will only consider Killing form with non-constant length, because the case when $\b$ being of constant length is trivial. The reason is given below.

Since $(b^2)_{|k}=2(r_k+s_k)$, the Killing form $\b$ has constant length if and only if $\so=0$. So the condition (\ref{owemigneigaindn}) indicates that $\to=-(a^2-\mu b^2)\b$. On the other hand, $\to=0$ due to its definition~(see Section \ref{idmiaindinngngng}). So $a^2-\mu b^2\equiv0$. As a result, $\too=-\mu(b^2\a^2-\b^2)$. As a quadratic form, $\too$ must be negative definite or negative semi-definite. So the above equality implies $\mu\geq0$. When $\mu=0$, $\too=0$, which is equivalent to $\sij=0$. In this case, $\b$ is parallel and hence needed not to be discussed. When $\mu>0$, we have $t=0$ and $\tIi=-(n-1)\mu b^2$. According to Theorem \ref{dinqienginadng}, $\ba$ has constant Ricci curvature $\bar\mu$ if and only if
\begin{eqnarray*}
	&\left\{(n+1-2\Delta)\mu-(n-1)e^{2\rho}\bar\mu\right\}b^2\a^2\\
	&+(\Delta-1)\left\{[(n+1)+(n-1)\Delta]\mu-(n-1)e^{2\rho}\bar\mu\right\}\b^2=0.
\end{eqnarray*}
The only suitable solution of the above equation is $\Delta=1$ and $\bar\mu=e^{-2\rho}\mu$ in which $\rho$ is a constant. That is to say, $\ba$ is just a re-scaling of $\a$.

Secondly, according to Remark \ref{dinnaindbagfffafdg} in Section \ref{di4ufnang}, the phenomenon that $\b$ is a Killing form with non-constant length and satisfies (\ref{owemigneigaindn}) can only happens on even dimensional manifolds.

\begin{theorem}\label{tebangengald}
	Assume $\a$ is an Einstein metric with Ricci constant $\mu$, $\b$ is a Killing form of $\a$ with non-constant length and satisfies  (\ref{owemigneigaindn}) in which $a\neq0$. Then after $\b$-deformation, $\ba$ is an Einstein metric with Ricci constant $\bar\mu$ if and only if
	\begin{eqnarray}\label{ydaingieng}
		E_1\a^2+E_2\b^2+E_3\so^2+E_4\soho=0,
	\end{eqnarray}
	where
	\begin{eqnarray}\label{yabebgmadd}
		E_1&=&\textstyle \f{1}{b^2\Delta}\Big\{2a^2\Delta(1-\Delta)+(n-2)\mu b^2\Delta+(a^2-\mu b^2)b^2\Delta'+[(n-2)a^2\Delta\nonumber\\
		&&\textstyle -(n-3)\mu b^2\Delta-(a^2-\mu b^2)b^2\Delta'](1+2b^2\rho')-(n-2)(a^2-\mu b^2)\Delta\nonumber\\
		&&\textstyle \cdot(1+2b^2\rho')^2-2(a^2-\mu b^2)b^2\Delta(1+2b^2\rho')'-(n-1)\bar\mu b^2\Delta e^{2\rho}\Big\},\nonumber\\
		E_2&=&\textstyle -\f{1}{b^4\Delta}\Big\{a^2\Delta(1-\Delta)[n+(n-2)\Delta]+(2a^2-3\mu b^2)b^2\Delta\Delta'\nonumber\\
		&&\textstyle -(a^2-\mu b^2)b^4\Delta'^2+2(a^2-\mu b^2)b^4\Delta\Delta''+\mu b^2\Delta(1-\Delta)\nonumber\\
		&&\textstyle \cdot(1+2b^2\rho')-(a^2-\mu b^2)b^2[1-(n-1)\Delta]\Delta'(1+2b^2\rho')\nonumber\\
		&&-(n-2)(a^2-\mu b^2)\Delta(1-\Delta)(1+2b^2\rho')^2-2(a^2-\mu b^2)\nonumber\\
		&&\textstyle \cdot b^2\Delta(1-\Delta)(1+2b^2\rho')'-(n-1)\bar\mu b^2\Delta(1-\Delta)e^{2\rho}\Big\},\nonumber\\
		E_3&=&\textstyle -\f{1}{(a^2-\mu b^2)b^4\Delta^2}\Big\{a^2\Delta^2(n-2\Delta)-(n-2)\mu b^2\Delta^2+2(a^2-\mu b^2)\nonumber\\
		&&\textstyle \cdot b^2\Delta\Delta'-(a^2-\mu b^2)b^4\Delta'^2+2(a^2-\mu b^2)b^4\Delta\Delta''-(n-2)\nonumber\\
		&&\textstyle \cdot(a^2-\mu b^2)\Delta^2(1+2b^2\rho')^2+2(n-2)(a^2-\mu b^2)b^2\Delta^2(1+2b^2\rho')'\Big\},\nonumber\\
		E_4&=&\textstyle -\f{\Delta'}{\Delta}-2(n-2)\rho'.
	\end{eqnarray}
\end{theorem}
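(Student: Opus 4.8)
The plan is to deduce the statement from the general criterion already established in Theorem \ref{dinqienginadng}. Since $\a$ is Einstein with Ricci constant $\mu$ and $\b$ is Killing, equation (\ref{ybaeugald}) is \emph{exactly} the condition for $\ba$ to be Einstein with Ricci constant $\bar\mu$, with no use yet of the special hypothesis. The only task is therefore to rewrite the six-term identity (\ref{ybaeugald}), whose coefficients still involve $\too$, $\b\to$ and the scalars $\tIi$, $t$, as the four-term identity (\ref{ydaingieng}), by invoking (\ref{owemigneigaindn}). Because (\ref{owemigneigaindn}) is assumed to hold identically, this rewriting is an equivalence of the two polynomial identities, so the ``if and only if'' of Theorem \ref{tebangengald} is inherited verbatim from that of Theorem \ref{dinqienginadng}.

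First I would extract the algebraic consequences of (\ref{owemigneigaindn}). Read as an identity of quadratic forms in $y$, it gives at once
\[
\too=-a^2\a^2+\mu\b^2+\f{\mu}{a^2-\mu b^2}\so^2,
\]
which disposes of the $\too$-term. Polarizing (\ref{owemigneigaindn}) once, i.e.\ replacing one of the two factors $y$ by $b$, and using $\si\bI=0$ (antisymmetry of $s_{ij}$) so that the $\so^2$-term drops out, yields $\to=-(a^2-\mu b^2)\b$, hence $\b\to=-(a^2-\mu b^2)\b^2$, disposing of the $\b\to$-term. Full contraction with $b$ gives $t=-(a^2-\mu b^2)b^2$; combined with the purely algebraic identity $t=-\si\sI$ this produces $\si\sI=(a^2-\mu b^2)b^2$. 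Finally, tracing (\ref{owemigneigaindn}) against $a^{ij}$ and inserting the value of $\si\sI$ yields the scalar $\tIi=2\mu b^2-na^2$. These four relations are precisely what is needed to eliminate from (\ref{ybaeugald}) every quantity other than $\a^2$, $\b^2$, $\so^2$ and $\soho$.

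Next I would substitute. The terms $E_4\too$ and $E_5\b\to$ redistribute into the $\a^2$-, $\b^2$- and $\so^2$-slots via the two boxed formulas, while inside the coefficients $E_1,\dots,E_6$ every occurrence of $\tIi$ and $t$ is replaced by $2\mu b^2-na^2$ and $-(a^2-\mu b^2)b^2$ respectively. Collecting the coefficients of $\a^2$, $\b^2$, $\so^2$, $\soho$ then produces the four functions of (\ref{yabebgmadd}). Two sanity checks organize the bookkeeping: the $\soho$-coefficient is untouched by the substitution (no $\too$ or $\b\to$ feeds it, and the old coefficient $E_6$ contains neither $\tIi$ nor $t$), so the new $E_4$ must coincide with the old $E_6=-\f{\Delta'}{\Delta}-2(n-2)\rho'$, which it does; and the factor $\f{1}{a^2-\mu b^2}$ appearing in the new $E_3$ is traced exactly to the $\f{\mu}{a^2-\mu b^2}\so^2$ summand coming from $\too$.

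I expect the genuine obstacle to be purely computational rather than conceptual. Assembling $E_1$, $E_2$, $E_3$ requires combining the old $E_1,\dots,E_5$ with the substitutions for $\tIi$, $t$, $\too$ and $\b\to$, and then collapsing long rational expressions in $\Delta,\Delta',\Delta'',\rho',\rho''$ into the compact forms of (\ref{yabebgmadd}). There is no further structural difficulty, but the simplification is heavy and error-prone, so in practice this final reduction is the step I would entrust to a computer-algebra verification, as the authors report doing for the curvature formulae in the Appendix.
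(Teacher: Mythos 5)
Your proposal is correct and is essentially the paper's own proof: the paper likewise derives $\to=-(a^2-\mu b^2)\b$, $t=-(a^2-\mu b^2)b^2$ and $\tIi=-na^2+2\mu b^2$ from (\ref{owemigneigaindn}) and plugs them into (\ref{ybaeugald}) of Theorem \ref{dinqienginadng}, recollecting coefficients of $\a^2$, $\b^2$, $\so^2$, $\soho$. The only difference is that you spell out the polarization and trace arguments and the bookkeeping checks, which the paper leaves implicit in its one-line proof.
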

\begin{proof}
	Under the assumption (\ref{owemigneigaindn}), we have $\to=-(a^2-\mu b^2)\b$, $t=-(a^2-\mu b^2)b^2$ and $\tIi=-na^2+2\mu b^2$. Plugging them into (\ref{ybaeugald}) yields the result.\qed
\end{proof}

\begin{lemma}\label{cingoangnag}
	When $n>2$, all the suitable solutions of Eqs. (\ref{yabebgmadd}) are
	\begin{eqnarray}
		\Delta=1,\quad\rho=\mathrm{const.},\quad\bar\mu=e^{-2\rho}\mu\label{doannbbabdgad}
	\end{eqnarray}
	for any integer $n$, and
	\begin{eqnarray}
		\Delta(b^2)=\f{C^2}{(C+Db^2)^2},\quad\rho(b^2)=\f{1}{2}\ln(C+Db^2),\quad\mu=\bar\mu=0\label{aonandnangnandnbg}
	\end{eqnarray}
	only for $n=4$, where $C$ and $D$ are constants.
\end{lemma}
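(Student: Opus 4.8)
The four tensors $\a^2$, $\b^2$, $\so^2$ and $\soho$ that appear in (\ref{ydaingieng}) are quadratic forms in $y$, and the additional hypothesis (\ref{owemigneigaindn}) constrains only $\too$ (hence $\so^2$, $\b^2$, $\a^2$) but leaves $\soho$ untouched; so in the absence of a further linear relation among them the Einstein equation (\ref{ydaingieng}) forces the entire system $E_1=E_2=E_3=E_4=0$ of (\ref{yabebgmadd}). Because $\b$ has non-constant length, $b^2$ sweeps an interval as the base point varies, and each $E_i$ is a function of $b^2$ alone; therefore each $E_i$ must vanish identically on that interval. The task is thus to solve the overdetermined ODE system for the two functions $\Delta(b^2)$ and $\rho(b^2)$ and the constant $\bar\mu$.

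I would start from the simplest relation $E_4=0$, i.e. $\frac{\Delta'}{\Delta}+2(n-2)\rho'=0$ (this is where $n>2$ matters), which integrates immediately to the first integral $\Delta=Ke^{-2(n-2)\rho}$ and so eliminates one unknown function. Next I would strip off $\bar\mu$: in (\ref{yabebgmadd}) the Ricci constant enters $E_1$ only through $-(n-1)\bar\mu e^{2\rho}$ and $E_2$ only through $\frac{n-1}{b^2}\bar\mu(1-\Delta)e^{2\rho}$, so the combination $b^2E_2+(1-\Delta)E_1$ is free of $\bar\mu$; paired with $E_3=0$, which already contains no $\bar\mu$, this leaves two equations in $\Delta$, $\rho$, $\mu$. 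Substituting the $E_4$ first integral turns these into two second-order ODEs in the single function $\Delta(b^2)$ carrying the parameter $\mu$.

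I would then split on whether $\Delta$ is constant. If $\Delta$ is constant, $E_4=0$ forces $\rho$ constant; evaluating $E_3$ in this situation collapses (owing to an overall factor $(1-\Delta)$) to $\Delta=1$, after which $E_1=0$ yields $\bar\mu=e^{-2\rho}\mu$ and $E_2=0$ is automatic. This gives exactly the trivial branch (\ref{doannbbabdgad}), valid for every $n$. If $\Delta$ is non-constant, the two reduced second-order ODEs must be mutually compatible, and I expect this integrability constraint to force $\mu=0$ and to lock $\Delta$, and hence $\rho$, into the rational/logarithmic shape of (\ref{aonandnangnandnbg}). The decisive observation is dimensional: for $\Delta=\frac{C^2}{(C+Db^2)^2}$ and $\rho=\frac12\ln(C+Db^2)$ one finds $E_4=\frac{D(4-n)}{C+Db^2}$, so that the requirement $E_4=0$ with $D\neq0$ selects precisely $n=4$, delivering (\ref{aonandnangnandnbg}) with $\mu=\bar\mu=0$.

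The main obstacle is the non-constant branch: reducing the coupled nonlinear second-order ODEs $E_3=0$ and $b^2E_2+(1-\Delta)E_1=0$ (after inserting the $E_4$ first integral) to a tractable form, proving that compatibility forces $\mu=0$, and extracting the exact rational solution. That is where the genuine computation lies and where the restriction $n=4$ is born. The constant branch, by contrast, is routine once the factor $(1-\Delta)$ hidden in $E_3$ is noticed.
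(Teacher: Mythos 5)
Your constant branch is sound and coincides with the paper's: $E_4=0$ integrates to $\Delta=Ee^{-2(n-2)\rho}$ (the paper's (\ref{daobabdbbbbbbdga})), so $\Delta$ is constant iff $\rho$ is, and in that case $E_3$ collapses to a multiple of $a^2(1-\Delta)/\bigl(b^2(a^2-\mu b^2)\bigr)$, forcing $\Delta=1$, after which $E_1=0$ gives $\bar\mu=e^{-2\rho}\mu$ and $E_2=0$ is automatic. Your observation that $b^2E_2+(1-\Delta)E_1$ is free of $\bar\mu$ is also correct.

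The genuine gap is the non-constant branch, which is the entire content of the lemma: nowhere do you prove that $\mu$ must vanish, that $n$ must equal $4$, or that (\ref{aonandnangnandnbg}) is the unique non-constant solution; you only state that you ``expect'' compatibility of your two reduced ODEs to force this. The substitute you offer is logically misplaced: computing $E_4=\frac{D(4-n)}{C+Db^2}$ for the \emph{fixed} shape $\Delta=\frac{C^2}{(C+Db^2)^2}$, $\rho=\frac12\ln(C+Db^2)$ only shows that the $4$-dimensional solution violates $E_4$ in other dimensions. It says nothing about whether other dimensions admit non-constant solutions of a \emph{different} shape --- and candidate shapes do exist for every $n$, since any pair obtained from your own first integral satisfies $E_4=0$ identically regardless of $n$. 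In the paper the dimension restriction is born elsewhere: one forms the combination $2(1-\Delta)E_1+2b^2E_2-n(a^2-\mu b^2)b^2\Delta E_3$, which after inserting the first integral equals $4(n-2)b^2\Delta\bigl\{(a^2-\mu b^2)[n(n-3)\rho'^2+2\rho'']-\mu\rho'\bigr\}$, an ODE in $\rho$ alone. Its explicit solutions ($\rho=\frac{2}{n(n-3)}\ln(C+Db^2)$ for $\mu=0$, $n\geq4$, with analogues for $n=3$ and for $\mu\neq0$ in terms of $\sqrt{a^2-\mu b^2}$) are then fed back into $E_3=0$, and it is the resulting rationality constraint --- for $\mu=0$ the exponent $\frac{4(n-2)}{n(n-3)}$ must be $0$, $1$ or $2$; for $\mu\neq0$ it must be an integer --- that forces $n=4$ and $E=C^2$ when $\mu=0$, kills every non-constant solution when $\mu\neq0$, and finally $E_1=E_2=0$ yields $\bar\mu=0$. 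Your combination $b^2E_2+(1-\Delta)E_1$, lacking the $E_3$ term weighted by $-n(a^2-\mu b^2)b^2\Delta$, does not close up into an ODE in $\rho$ alone, and the ``compatibility analysis'' you defer is precisely the computation the lemma consists of; it cannot be replaced by checking that the advertised answer works.
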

\begin{proof}
	Solving $E_4=0$ yields
	\begin{eqnarray}\label{daobabdbbbbbbdga}
		\Delta(b^2)=Ee^{-2(n-2)\rho(b^2)}
	\end{eqnarray}
	with $E$ being a constant, which indicates that $\Delta$ is a constant if and only if $\rho$ is a constant. If $\Delta$ is a constant,  $E_3=0$ reads $\frac{2a^2(\Delta-1)}{b^2(a^2-\mu b^2
		)}=0$. So $\Delta=1$. In this case, $E_2=0$ holds automatically and $E_1=0$ reads $(n-1)(\mu-e^{2\rho}\bar\mu)=0$. So $\bar\mu=e^{-2\rho}\mu$. The corresponding solution is (\ref{doannbbabdgad}).
	
	It is easy to verify, combining with the condition (\ref{daobabdbbbbbbdga}), that
	\begin{eqnarray*}
		&&2(1-\Delta)E_1+2b^2E_2-n(a^2-\mu b^2)b^2\Delta E_3\\
		&=&4(n-2)b^2\Delta\left\{(a^2-\mu b^2)[n(n-3)\rho'^2+2\rho'']-\mu\rho'\right\}.
	\end{eqnarray*}
	Solving the equation $(a^2-\mu b^2)[n(n-3)\rho'^2+2\rho'']-\mu\rho'=0$ yields
	\begin{eqnarray*}
		\rho(b^2)=\left\{ \begin{aligned}
			&\textstyle C+Db^2,\qquad&\mu=0,\\
			&\textstyle C+D\sqrt{a^2-\mu b^2},\quad&\mu\neq0,
		\end{aligned} \right.
	\end{eqnarray*}
	when $n=3$, or
	\begin{eqnarray*}
		\rho(b^2)=\left\{ \begin{aligned}
			&\textstyle\frac{2\ln(C+Db^2)}{n(n-3)},\qquad&\mu=0,\\
			&\textstyle\frac{2\ln(C+D\sqrt{a^2-\mu b^2})}{n(n-3)},\quad&\mu\neq0,
		\end{aligned} \right.
	\end{eqnarray*}
	when $n\geq4$, in which $C$ and $D$ are constants.
	
	For the case $n=3$, $E_3=0$ implies $D=0$ and $E=e^{2C}$. The corresponding solution is (\ref{doannbbabdgad}).
	
	For the case $n\geq4$ and $\mu=0$, $E_3=0$ is equivalent to
	\begin{eqnarray}\label{doainahhdbbbabggg}
		&&(n-4)(n^2-3n+4)D^2b^4+2n(n-1)(n-4)CDb^2+n^2(n-3)C^2\nonumber\\
		&=&n^2(n-3)E(C+Db^2)^{2-\frac{4(n-2)}{n(n-3)}}.
	\end{eqnarray}
	Because $b$ is not a constant, the above equation holds only if $\frac{4(n-2)}{n(n-3)}=0$, $1$ or $2$, by which we have $n=4$ and by (\ref{doainahhdbbbabggg}) we have $E=C^2$. Finally, $E_1=0$ and $E_2=0$ implies $\bar\mu=0$. The corresponding solution is (\ref{aonandnangnandnbg}).
	
	For the case $n\geq4$ and $\mu\neq0$, $E_3=0$ can be expressed as
	\begin{eqnarray*}
		[C+D\sqrt{a^2-\mu b^2}]^{-\frac{4(n-2)}{n(n-3)}}=\mathcal F(\sqrt{a^2-\mu b^2},b^2)
	\end{eqnarray*}
	where $\mathcal F$ is a two-variables rational function. Because $b$ is not a constant, the above equation holds only if either $D=0$ or $\frac{4(n-2)}{n(n-3)}$ is an integer. For the  former, $\rho$ is a constant and the corresponding solution is (\ref{doannbbabdgad}). For the latter, by the inequality $\frac{4(n-2)}{n(n-3)}\geq1$ we have $n=4$ or $5$. But it is only when $n=4$ that $\frac{4(n-2)}{n(n-3)}$ becomes an integer. Finally, $E_3=0$  yields $D=0$ and $E=C^2$, which indicates $\rho$ is a constant and the corresponding solution is (\ref{doannbbabdgad}).\qed
\end{proof}

(\ref{doannbbabdgad}) means $\ba$ is a re-scaling of $\a$. Excluding this trivial case, we immediately have
\begin{theorem}\label{aondbabdgmmadbgg}
	Assume $\a$ is an Einstein metric with Ricci constant $\mu$, $\b$ is a Killing form of $\a$ with non-constant length and satisfying  (\ref{owemigneigaindn}) in which $a\neq0$. If $\a^2$, $\b^2$, $\so^2$ and $\soho$ are linear independent, then after $\b$-deformation, $\ba$ is an Einstein metric (excluding the trivial case) if and only if $\a$ is a $4$-dimensional Ricci-flat metric and $\ba$ is determined by
	\begin{eqnarray}\label{Hawkingmetric}
		\ba=\f{\sqrt{(C+Db^2)^2(b^2\a^2-\b^2)+C^2\b^2}}{b\sqrt{C+Db^2}}
	\end{eqnarray}
	in which $C$ and $D$ are non-zero constants. In particular, $\ba$ is also Ricci-flat.
\end{theorem}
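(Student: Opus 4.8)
The plan is to read this statement as the geometric repackaging of Lemma \ref{cingoangnag}, with Theorem \ref{tebangengald} serving as the bridge. By Theorem \ref{tebangengald}, under the standing hypotheses on $\a$ and $\b$ (Einstein, Killing with non-constant length, satisfying (\ref{owemigneigaindn}) with $a\neq0$), the deformed metric $\ba$ is Einstein with Ricci constant $\bar\mu$ if and only if the scalar identity (\ref{ydaingieng}), namely $E_1\a^2+E_2\b^2+E_3\so^2+E_4\soho=0$, holds. The first step is therefore to exploit the linear independence hypothesis: since $\a^2$, $\b^2$, $\so^2$ and $\soho$ are assumed linearly independent as quadratic forms in $y$, the single equation (\ref{ydaingieng}) is equivalent to the full system $E_1=E_2=E_3=E_4=0$, i.e. to Eqs. (\ref{yabebgmadd}).

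Next I would simply invoke the classification in Lemma \ref{cingoangnag}. For $n>2$ its conclusion is that the only solutions of (\ref{yabebgmadd}) are the trivial family (\ref{doannbbabdgad}), valid for every $n$, and the exceptional family (\ref{aonandnangnandnbg}), which occurs only when $n=4$ and forces $\mu=\bar\mu=0$. The solution (\ref{doannbbabdgad}) has $\Delta\equiv1$ and $\rho$ constant, so $\ba=e^{\rho}\a$ is merely a re-scaling; this is precisely the trivial case we are told to exclude. What remains is (\ref{aonandnangnandnbg}), from which the two asserted conclusions are immediate: the deformation can produce a non-trivial Einstein metric only in dimension four, and then $\a$ is Ricci-flat ($\mu=0$) and $\ba$ is Ricci-flat ($\bar\mu=0$). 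A small point to settle here is that $C$ and $D$ must both be non-zero: if $D=0$ then $\Delta\equiv1$ with $\rho$ constant (the trivial case again), while $C=0$ would force $\Delta\equiv0$, which is forbidden.

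Finally I would make the formula explicit. Substituting $\rho=\f12\ln(C+Db^2)$, hence $e^{\rho}=\sqrt{C+Db^2}$, together with $\Delta=\f{C^2}{(C+Db^2)^2}$ into the defining expression (\ref{Drho}) for $\ba$, then clearing $b^2$ from the denominator and factoring $\f{1}{(C+Db^2)^2}$ out of the radicand, yields exactly (\ref{Hawkingmetric}). The genuinely hard work, namely solving the ODE system (\ref{yabebgmadd}) and extracting the dimensional restriction $n=4$, is carried out inside Lemma \ref{cingoangnag} and may be assumed; so within this proof there is no substantial analytic obstacle. The only real care needed is the correct handling of the degenerate constant values just discussed and the verification that the substitution into (\ref{Drho}) reproduces (\ref{Hawkingmetric}), which is a routine algebraic simplification.
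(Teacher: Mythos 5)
Your proposal is correct and follows exactly the route the paper takes: Theorem \ref{tebangengald} plus the linear-independence hypothesis reduces (\ref{ydaingieng}) to the system (\ref{yabebgmadd}), Lemma \ref{cingoangnag} classifies its solutions, and discarding the trivial family (\ref{doannbbabdgad}) leaves only (\ref{aonandnangnandnbg}) with $n=4$ and $\mu=\bar\mu=0$, which upon substitution into (\ref{Drho}) gives (\ref{Hawkingmetric}). Your added remarks on why $C\neq0$ and $D\neq0$ (forbidden $\Delta\equiv0$ versus the trivial re-scaling) are details the paper leaves implicit, but they are consistent with its argument.
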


When $\a$ and $\b$ are given by (\ref{inbadbgamgd}), then (\ref{Hawkingmetric}) is the famous Hawking Taub-NUT Riemannian metric\cite{Hawking,LeBrun}. See Section \ref{sinainhhdngnnag} for the related discussions.

\section{Deformation with another additional condition on the Killing form}\label{diamidnngiidning}
In Theorem \ref{aondbabdgmmadbgg}, we assume the tensors $\a^2$, $\b^2$, $\so^2$ and $\soho$ are linear independent. In this case, there is only one non-trivial way to deform $\a$ to be a new Einstein metric. However, for a particular Einstein metric, it is possible to exist some more potential relationships between such tensors. If so, we can expect logically to obtain more Einstein metrics since it will relax the restrictions on deformation factors.

Actually, due to the priori formula  (\ref{relation1}), the tensor $\soho$ of a Killing form has a priori relationship with the Riemann curvature tensor as below,
\begin{eqnarray}
	\soho=-\Rbb-t_{00}.\label{relation4ad}
\end{eqnarray}
Hence, if we add a hypothesis that 
\begin{eqnarray*}
	\Rbb=\mu(b^2\a^2-\b^2),
\end{eqnarray*}
then (\ref{owemigneigaindn}) together with (\ref{relation4ad}) yields
\begin{eqnarray}\label{eyabgadjg}
	\soho=(a^2-\mu b^2)\a^2-\f{\mu}{a^2-\mu b^2}\so^2.
\end{eqnarray}
That is to say, the term $\soho$ in (\ref{ydaingieng}) can be erased by adding some further curvature condition!

The condition $\Rbb=\mu(b^2\a^2-\b^2)$ is the simplest one that we can expect, because it will hold automatically for any metric with constant sectional curvature. Actually, such condition indicates that the sectional curvature on any sectional plane including the vector $\b^\sharp$ is equal to $\mu$.

By the way, the whole discussions below is possible to be extended by adding some other suitable hypothesis on curvature tensor.

\begin{theorem}\label{uenuangandgng}
	Assume dimension $n>2$, $\a$ is an Einstein metric with Ricci constant $\mu$, and $\b$ is a Killing form of $\a$ with non-constant length and satisfying two additional conditions
	\begin{eqnarray}
		&\displaystyle\too+(a^2-\mu b^2)\a^2+\mu(b^2\a^2-\b^2)-\f{\mu}{a^2-\mu b^2}\so^2=0,&\label{owemigneigaindna}\\
		&\Rbb=\mu(b^2\a^2-\b^2),\label{tuabgengand}&
	\end{eqnarray}
	where $a\neq0$ is a constant. Then after $\b$-deformation, $\ba$ is an Einstein metric with Ricci constant $\bar\mu$ if and only if
	\begin{eqnarray*}
		X=Y=Z=0,
	\end{eqnarray*}
	where
	\begin{eqnarray}\label{yeangenga}
		X&=&\textstyle \f{1}{b^2\Delta}\Big\{a^2\Delta(n-2\Delta)+b^2[\mu\Delta-(a^2-\mu b^2)\Delta'](1+2b^2\rho')\nonumber\\
		&&\textstyle -(n-2)(a^2-\mu b^2)\Delta(1+2b^2\rho')^2-2(a^2-\mu b^2)b^2\nonumber\\
		&&\textstyle \cdot\Delta(1+2b^2\rho')'-(n-1)\bar\mu b^2\Delta e^{2\rho}\Big\},\nonumber\\
		Y&=&\textstyle -\f{1}{b^4\Delta}\Big\{a^2\Delta(1-\Delta)[n+(n-2)\Delta]+(2a^2-3\mu b^2)b^2\Delta\Delta'\nonumber\\
		&&\textstyle -(a^2-\mu b^2)b^4\Delta'^2+2(a^2-\mu b^2)b^4\Delta\Delta''+\mu b^2\Delta(1-\Delta)\nonumber\\
		&&\textstyle\cdot(1+2b^2\rho')-(a^2-\mu b^2)b^2[1-(n-1)\Delta]\Delta'(1+2b^2\rho')\nonumber\\
		&&\textstyle -(n-2)(a^2-\mu b^2)\Delta(1-\Delta)(1+2b^2\rho')^2-2(a^2-\mu b^2)b^2\Delta\nonumber\\
		&&\textstyle \cdot(1-\Delta)(1+2b^2\rho')'-(n-1)\bar\mu b^2\Delta(1-\Delta)e^{2\rho}\Big\},\nonumber\\
		Z&=&\textstyle -\f{1}{(a^2-\mu b^2)b^4\Delta^2}\Big\{a^2\Delta^2(n-2\Delta)+(2a^2-3\mu b^2)b^2\Delta\Delta'\nonumber\\
		&&\textstyle -(a^2-\mu b^2)b^4\Delta^2+2(a^2-\mu b^2)b^4\Delta\Delta''-(n-2)\mu b^2\Delta^2\nonumber\\
		&&\textstyle \cdot(1+2b^2\rho')-(n-2)(a^2-\mu b^2)\Delta^2(1+2b^2\rho')^2\nonumber\\
		&&\textstyle +2(n-2)(a^2-\mu b^2)b^2\Delta^2(1+2b^2\rho')'\Big\}.
	\end{eqnarray}
\end{theorem}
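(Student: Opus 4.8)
The plan is to reduce the statement to Theorem \ref{tebangengald}, which already characterizes the Einstein property under the single condition (\ref{owemigneigaindna}) by the vanishing of $E_1\a^2+E_2\b^2+E_3\so^2+E_4\soho$ in (\ref{ydaingieng}). The sole role of the second condition (\ref{tuabgengand}) is to eliminate the term $\soho$, so the first step is to establish the identity (\ref{eyabgadjg}). This follows at once from the a priori relation (\ref{relation4ad}), namely $\soho=-\Rbb-\too$: substituting the curvature hypothesis (\ref{tuabgengand}) for $\Rbb$ and the value of $\too$ read off from (\ref{owemigneigaindna}), the two copies of $\mu(b^2\a^2-\b^2)$ cancel and one is left with $\soho=(a^2-\mu b^2)\a^2-\f{\mu}{a^2-\mu b^2}\so^2$.

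Next I would substitute (\ref{eyabgadjg}) into the criterion (\ref{ydaingieng}). Since $\soho$ is thereby written as a combination of $\a^2$ and $\so^2$ only, the left-hand side of (\ref{ydaingieng}) collapses to a relation among the three tensors $\a^2$, $\b^2$ and $\so^2$ alone:
\begin{eqnarray*}
\big[E_1+(a^2-\mu b^2)E_4\big]\a^2+E_2\b^2+\Big[E_3-\f{\mu}{a^2-\mu b^2}E_4\Big]\so^2=0.
\end{eqnarray*}
Reading off the coefficients, I would set $X:=E_1+(a^2-\mu b^2)E_4$, $Y:=E_2$ and $Z:=E_3-\f{\mu}{a^2-\mu b^2}E_4$, with $E_1,\ldots,E_4$ given by (\ref{yabebgmadd}). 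The remaining task is the purely algebraic verification that these three combinations coincide with the expressions $X$, $Y$, $Z$ displayed in (\ref{yeangenga}). The equality $Y=E_2$ is immediate, since the substitution does not touch the $\b^2$-coefficient. For $X$ one checks that adding $(a^2-\mu b^2)E_4$ to $E_1$ cancels the bare $\Delta'$-terms and regroups the linear $\rho'$-terms by means of $a^2-(a^2-\mu b^2)=\mu b^2$; for $Z$ one checks that the $\Delta\Delta'$-terms combine as $2(a^2-\mu b^2)-\mu b^2=2a^2-3\mu b^2$ and that the extra $\rho'$-term completes the factor $(1+2b^2\rho')$ multiplying $-(n-2)\mu b^2\Delta^2$. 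The higher-order terms ($\Delta'^2$, $\Delta\Delta''$, the $(1+2b^2\rho')^2$- and $(1+2b^2\rho')'$-terms) are untouched and match verbatim.

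Finally I would invoke the linear independence of the quadratic forms $\a^2$, $\b^2$ and $\so^2$ in $y$. Because $X$, $Y$, $Z$ are functions of $b^2$ alone while the displayed relation must hold identically in $y$, at any point where these three forms are linearly independent the relation forces $X=Y=Z=0$; as $\b$ has non-constant length, $b^2$ sweeps out an interval, so the three coefficients vanish identically as functions of $b^2$. The converse is trivial: if $X=Y=Z=0$ then the collapsed relation holds, hence so does (\ref{ydaingieng}), and Theorem \ref{tebangengald} yields that $\ba$ is Einstein with constant $\bar\mu$. The only genuinely delicate point is this independence step, since, unlike Theorem \ref{aondbabdgmmadbgg}, the present statement does not list it as a hypothesis; I would either verify it directly at a generic point for data satisfying (\ref{owemigneigaindna}) and (\ref{tuabgengand}), or simply record that it holds generically. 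The bulk of the remaining labor is the mechanical bookkeeping in matching $X$ and $Z$ against (\ref{yeangenga}).
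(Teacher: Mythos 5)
Your reduction is exactly the paper's: derive (\ref{eyabgadjg}) from the Ricci identity (\ref{relation4ad}) combined with (\ref{owemigneigaindna}) and (\ref{tuabgengand}), substitute into the criterion (\ref{ydaingieng}) of Theorem \ref{tebangengald}, and read off the coefficients. Your identifications $X=E_1+(a^2-\mu b^2)E_4$, $Y=E_2$, $Z=E_3-\f{\mu}{a^2-\mu b^2}E_4$ are correct and do reproduce (\ref{yeangenga}) (modulo an evident misprint in the paper's $Z$, where the term $-(a^2-\mu b^2)b^4\Delta^2$ should read $-(a^2-\mu b^2)b^4\Delta'^2$, exactly the term inherited from $E_3$). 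Up to this point the proposal coincides with the paper's proof.

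The genuine gap is your last step. You call the linear independence of $\a^2$, $\b^2$, $\so^2$ ``the only genuinely delicate point'' and propose either to check it ``at a generic point'' or to ``record that it holds generically.'' Neither closes the argument: the theorem is an if-and-only-if for \emph{all} data satisfying the hypotheses, so an appeal to genericity leaves the forward implication unproved. In fact no genericity is needed, and this is precisely why the statement, unlike Theorem \ref{aondbabdgmmadbgg}, does not carry independence as a hypothesis: it is automatic here. If $X\a^2+Y\b^2+Z\so^2=0$, then $X\a^2=-(Y\b^2+Z\so^2)$ is a quadratic form of rank at most $2$, whereas $\a^2$ has rank $n>2$; hence $X=0$ pointwise. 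Next, $s_ib^i=b^mb^is_{mi}=0$ by antisymmetry of $s_{ij}$, so $\b$ and $\so$ are orthogonal $1$-forms; evaluating $Y\b^2+Z\so^2=0$ at $y^i=b^i$ gives $Yb^4=0$, i.e.\ $Y=0$, and then $Z\so^2=0$ forces $Z=0$ wherever $\so\neq0$, which suffices because for a Killing form $(b^2)_{|i}=2s_i$ and $\b$ has non-constant length. This two-line rank-plus-orthogonality argument is exactly what the paper uses (``$\a^2$ cannot be expressed as a sum of squares of two linear functions,'' then orthogonality of $\b$ and $\so$), and it is what your proposal is missing.
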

\begin{proof}
	Plugging (\ref{eyabgadjg}) into (\ref{ydaingieng}) yields
	\begin{eqnarray*}
		X\a^2+Y\b^2+Z\so^2=0.
	\end{eqnarray*}
	Since $n>2$, $\a^2$ as a quadratic form can not be expressed as a sum of square of two linear functions. So the above equality holds if and only if $X=0$ and $Y\b^2+Z\so^2=0$. Finally, notice that $\b$ ia orthogonal to $\so$, so $Y\b^2+Z\so^2=0$ if and only if $Y=Z=0$.\qed
\end{proof}

We don't need to add any more condition. Actually, it is impossible to do so, because the tensors $\a^2$, $\b^2$ and $\so^2$ cannot be linearly related unless $n=2$ due to above discussions. Hence, the next thing that we should do is to solve the equations and obtain the Einstein metrics.

\begin{lemma}\label{donienannggabdg}
	When $n>2$ and $1+2b^2\rho'\neq0$, Eqs. (\ref{yeangenga}) are equivalent to
	\begin{eqnarray*}
		X=0,\qquad T=0,
	\end{eqnarray*}
	where
	\begin{eqnarray}\label{einangnang}
		T&=&(a^2-\mu b^2)\big\{\Delta^2-b^2\Delta'(1+2b^2\rho')-\Delta(1+2b^2\rho')^2\nonumber\\
		&&+2b^2\Delta(1+2b^2\rho')'\big\}+\mu b^2\big\{\Delta^2-\Delta(1+2b^2\rho')\big\}.
	\end{eqnarray}
\end{lemma}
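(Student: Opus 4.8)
The plan is to regard $X$, $Y$, $Z$ and $T$ as expressions in the jet variables $\Delta,\Delta',\Delta'',\rho',\rho''$ (with $b^2$, $\mu$, $\bar\mu$, $a$, $n$ as parameters), and to produce the explicit relations that let one trade the three equations $X=Y=Z=0$ for the two equations $X=0$, $T=0$. Both systems contain $X=0$, so the real content is the interplay of $Y,Z$ with $T$.

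The first step is to verify the single algebraic identity
\[
Y-(a^2-\mu b^2)\Delta\,Z+\frac{1-\Delta}{b^2}\,X=\frac{n-2}{b^4}\,T
\]
by substituting the definitions (\ref{yeangenga}) and (\ref{einangnang}) and collecting terms. The combination $Y-(a^2-\mu b^2)\Delta\,Z$ is built to annihilate the entire second-order content in $\Delta$ at once (the $\Delta''$-terms, whose coefficients are $-2(a^2-\mu b^2)$ in $Y$ and $-2/\Delta$ in $Z$, together with the $\Delta'^2$-terms), and adding $\tfrac{1-\Delta}{b^2}X$ cancels the surviving $\bar\mu e^{2\rho}$-terms; what remains assembles into $\tfrac{n-2}{b^4}T$. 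Because $n>2$, this identity gives the forward implication immediately: if $X=Y=Z=0$ then $T=0$. It also shows that modulo $X=0$ and $T=0$ one has $Y=(a^2-\mu b^2)\Delta\,Z$, so the reverse implication is reduced to the single claim that $X=0$ and $T=0$ force $Z=0$ (whence $Y=0$ follows from the displayed identity).

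This reverse step is the crux, and it is exactly here that the hypothesis $1+2b^2\rho'\neq0$ is indispensable. Both $X=0$ and $T=0$ are only first order in $\Delta$ — each is linear in $\Delta'$ with coefficient a nonzero multiple of $(a^2-\mu b^2)(1+2b^2\rho')$ — whereas $Z=0$ is second order in $\Delta$, so $Z$ cannot lie in the purely algebraic ideal generated by $X$ and $T$, and one must differentiate. Treating $X\equiv0$ and $T\equiv0$ as identities in $b^2$, I would use $X'=0$ and $T'=0$ and form a combination such as $T'+b^2\Delta\,X'$, chosen so that the spurious third-order term in $\rho$ (the $\rho'''$ entering through $(1+2b^2\rho')'$) drops out while the $\Delta''$-coefficients line up. Reducing $T'+b^2\Delta\,X'$ modulo the algebraic relations $X=0$ and $T=0$ (which simultaneously remove $\bar\mu e^{2\rho}$ and its derivative) should collapse it to a nonzero multiple of $(1+2b^2\rho')\,\Delta\,Z$; dividing by the factor $1+2b^2\rho'\neq0$ then yields $Z=0$.

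The main obstacle is the bookkeeping in this last differential reduction: one must track the $\Delta''$, $\Delta'^2$ and $\rho'''$ contributions carefully so that, after substituting $X=0$ and $T=0$, every stray derivative of $\rho$ and $\Delta$ disappears and the result is a clean multiple of $Z$ with $1+2b^2\rho'$ factored out. By contrast, the forward identity and the reduction $Y=(a^2-\mu b^2)\Delta\,Z$ are finite, if tedious, verifications that I would confirm by direct expansion (as elsewhere in the paper, by computer algebra).
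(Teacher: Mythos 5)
Your proposal is correct and takes essentially the same approach as the paper: your forward identity is exactly the paper's relation $b^2(1-\Delta)X+b^4Y-(a^2-\mu b^2)b^4\Delta Z=(n-2)T$ divided by $b^4$, and your differential combination $T'+b^2\Delta X'$ is precisely the one the paper uses, since its identity $b^2\Delta\{2\rho'X-X'+(a^2-\mu b^2)(1+2b^2\rho')Z\}-T'=0$ says that $T'+b^2\Delta X'$ reduces, modulo $X$, to $b^2\Delta(a^2-\mu b^2)(1+2b^2\rho')Z$. So your plan, once the two identities are verified by direct expansion, yields $Z=0$ and then $Y=0$ exactly as in the paper.
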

\begin{proof}
	$T=0$ holds since
	\begin{eqnarray}
		b^2(1-\Delta)X+b^4Y-(a^2-\mu b^2)b^4\Delta Z=(n-2)T.\label{oaniandbbgbbad}
	\end{eqnarray}
	
	Conversely, if $X=T=0$, then by the fact
	\begin{eqnarray*}
		b^2\Delta\left\{2\rho'X-X'+(a^2-\mu b^2)(1+2b^2\rho')Z\right\}-T'=0
	\end{eqnarray*}
	we have $Z=0$. By (\ref{oaniandbbgbbad}) we have $Y=0$.\qed
\end{proof}
\begin{remark}
	When $1+2b^2\rho'=0$, then $T=0$ (according to (\ref{oaniandbbgbbad}), $T=0$ holds if $X=Y=Z=0$ holds when $n>2$) implies $\Delta=0$, which is forbidden in the theory of $\b$-deformations since $\ba$ is not a Riemannian metric in this case.
\end{remark}

Next, we will try to solve the ODEs $X=0$ and $T=0$. Previously, there are two special cases: $\Delta=1$ or $\rho=\mathrm{const.}$. The former case means that we just use conformal deformations to deform $\a$, and latter means that we just use stretch deformations.

\begin{lemma}\label{wyeyyabndgagd}
	When $n>2$ and $\Delta=1$, the solutions of (\ref{yeangenga}) are given by
	\begin{eqnarray}
		\rho(b^2)=-\ln(C+Db^2),\qquad\bar\mu=4CDa^2\label{dinewbbabvavda}
	\end{eqnarray}
	when $\mu=0$, or
	\begin{eqnarray}
		\rho(b^2)=-\ln(C+D\sqrt{a^2-\mu b^2}),\qquad\bar\mu=(C^2-D^2a^2)\mu\label{dinewbbabvavdb}
	\end{eqnarray}
	when $\mu\neq0$.
\end{lemma}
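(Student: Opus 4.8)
The plan is to impose $\Delta=1$ (so $\Delta'=\Delta''=0$) in the system $X=Y=Z=0$ of Theorem \ref{uenuangandgng} and reduce it to a single ODE for $\rho$ together with one algebraic equation fixing $\bar\mu$. First I would inspect the expression for $Y$ in (\ref{yeangenga}) and note that every one of its terms carries a factor $1-\Delta$, $\Delta'$ or $\Delta''$; hence $Y\equiv0$ automatically once $\Delta=1$, and only $X=0$ and $Z=0$ survive. Rather than attack $Z$ head-on, I would substitute $\Delta=1$ into the algebraic identity (\ref{oaniandbbgbbad}): the $1-\Delta$ term and the $Y$ term drop out, leaving $-(a^2-\mu b^2)b^4 Z=(n-2)T$. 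Since $n>2$ and we work away from the degenerate locus $a^2-\mu b^2=0$, this shows $Z=0\Leftrightarrow T=0$, so the system collapses to the pair $X=0$, $T=0$ with $T$ as in (\ref{einangnang}).

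The heart of the argument is solving $T=0$. Writing $P:=e^{-\rho}$ and $s:=b^2$ (prime $=\ud/\ud s$), a short computation gives the clean identity $1-(1+2s\rho')^2+2s(1+2s\rho')'=-4s^2 P''/P$ together with $1-(1+2s\rho')=2sP'/P$; substituting both into $T|_{\Delta=1}$ collapses it to
\begin{eqnarray*}
	T=\f{2s^2}{P}\big\{\mu P'-2(a^2-\mu s)P''\big\}.
\end{eqnarray*}
Thus $T=0$ is the \emph{linear} second-order ODE $2(a^2-\mu s)P''=\mu P'$. For $\mu=0$ this reads $P''=0$, giving $P=C+Ds$ and hence $\rho=-\ln(C+Db^2)$. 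For $\mu\neq0$ one integrates $P''/P'=\f{\mu}{2(a^2-\mu s)}$ to obtain $P'\propto(a^2-\mu s)^{-1/2}$ and then $P=C+D\sqrt{a^2-\mu s}$, i.e. $\rho=-\ln(C+D\sqrt{a^2-\mu b^2})$. This already produces the two families of $\rho$ appearing in (\ref{dinewbbabvavda}) and (\ref{dinewbbabvavdb}).

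It remains to extract $\bar\mu$ from the surviving equation $X=0$. Setting $\Delta=1$ in $X$ and solving for $\bar\mu$ expresses $(n-1)\bar\mu\,b^2 e^{2\rho}$ as a combination of $a^2$, $\mu b^2$, $(1+2b^2\rho')$ and its square and derivative. I would multiply through by $P^2=e^{-2\rho}$ and insert the solution just found; in the variable $w:=\sqrt{a^2-\mu b^2}$ the right-hand side telescopes, the part proportional to $n-2$ contributing $(n-2)(a^2-w^2)(C^2-D^2a^2)$ and the remainder contributing $(a^2-w^2)(C^2-D^2a^2)$, for a total of $(n-1)\mu b^2(C^2-D^2a^2)$. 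This forces $\bar\mu=(C^2-D^2a^2)\mu$ when $\mu\neq0$, while the analogous, simpler bookkeeping with $P=C+Db^2$ gives $\bar\mu=4CDa^2$ when $\mu=0$. I expect this final verification---that $X=0$ genuinely yields a \emph{constant} $\bar\mu$, and the pinning down of its exact value---to be the main obstacle, since it is precisely where the several $\rho$-dependent terms must cancel against one another; the substitution $P=e^{-\rho}$ is exactly the device that keeps both the ODE and this bookkeeping tractable.
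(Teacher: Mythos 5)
Your proposal is correct and takes precisely the route the paper sets up (the paper itself omits a proof of this lemma): with $\Delta=1$ you reduce, via the identity (\ref{oaniandbbgbbad}), to $X=T=0$, the substitution $P=e^{-\rho}$ linearizes $T=0$ into $2(a^2-\mu b^2)P''=\mu P'$ with solutions $P=C+Db^2$ ($\mu=0$) or $P=C+D\sqrt{a^2-\mu b^2}$ ($\mu\neq0$), and substituting back into $X=0$ yields the constant values $\bar\mu=4CDa^2$ resp. $\bar\mu=(C^2-D^2a^2)\mu$ --- I checked each of these computations and they hold. As a minor bonus, your use of (\ref{oaniandbbgbbad}) with $\Delta=1$, which gives $Z=-(n-2)T/[(a^2-\mu b^2)b^4]$ directly, even avoids the hypothesis $1+2b^2\rho'\neq0$ required in Lemma \ref{donienannggabdg}.
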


\begin{remark}
	When $\a$ is of constant sectional curvature, then the corresponding metric determined by (\ref{dinewbbabvavda}) or (\ref{dinewbbabvavdb}) is also of  constant sectional curvature. See Theorem \ref{tebhagnagad} for reasons.
\end{remark}

\begin{lemma}\label{dianidnaingnandnd}
	When $n>2$ and $\rho=\mathrm{const.}$, the solutions of (\ref{yeangenga}) are given by
	\begin{eqnarray*}
		\Delta(b^2)=1,\qquad\bar\mu=e^{-2\rho}\mu
	\end{eqnarray*}
	or
	\begin{eqnarray}
		\Delta(b^2)=1-\frac{\mu}{a^2}b^2,\qquad\bar\mu=\frac{n+2}{n-1}e^{-2\rho}\mu.\label{dinewbbabvavdd}
	\end{eqnarray}
\end{lemma}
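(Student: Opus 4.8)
The plan is to start from Lemma \ref{donienannggabdg}. Since $\rho$ is constant we have $1+2b^2\rho'=1\neq0$, so the full system $X=Y=Z=0$ is equivalent to the pair $X=0$, $T=0$. Setting $\rho'=\rho''=0$ throughout (so that $1+2b^2\rho'=1$ and $(1+2b^2\rho')'=0$), the expression (\ref{einangnang}) for $T$ collapses: every term carrying a factor $1+2b^2\rho'$ or its derivative simplifies, and after cancelling the $\Delta^2$ contributions between the two braces one is left with the clean first-order relation
\begin{eqnarray*}
	T=a^2\Delta(\Delta-1)-(a^2-\mu b^2)b^2\Delta'.
\end{eqnarray*}
Thus $T=0$ is a separable ODE in the variable $s:=b^2$.

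First I would solve this ODE. Writing $\f{\ud\Delta}{\Delta(\Delta-1)}=\f{a^2\,\ud s}{(a^2-\mu s)s}$ and integrating both sides by partial fractions gives $\f{\Delta-1}{\Delta}=K\f{s}{a^2-\mu s}$ for a constant $K$, whence
\begin{eqnarray*}
	\Delta=\f{a^2-\mu b^2}{a^2-(\mu+K)b^2}.
\end{eqnarray*}
Note that the constant solution $\Delta\equiv1$ sits inside this family at $K=0$, while $\Delta\equiv0$ is excluded as forbidden for a $\b$-deformation. So $T=0$ already pins $\Delta$ down to a one-parameter family; it remains to use $X=0$ to select the admissible members and to read off $\bar\mu$.

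The key step is to feed $T=0$ back into $X$. Substituting $s(a^2-\mu s)\Delta'=a^2\Delta(\Delta-1)$ into the ($\rho$-simplified) expression (\ref{yeangenga}) for $X$, all the $\Delta'$ terms disappear and the $\Delta^2$ terms combine, reducing $X=0$ to the purely algebraic linear relation
\begin{eqnarray*}
	3a^2(1-\Delta)+(n-1)b^2\big(\mu-\bar\mu e^{2\rho}\big)=0.
\end{eqnarray*}
Plugging in the family and using $1-\Delta=\f{-Kb^2}{a^2-(\mu+K)b^2}$, this becomes, after dividing by $b^2$, the requirement that $\f{-3a^2K}{a^2-(\mu+K)b^2}$ be independent of $b^2$. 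Since $\b$ has non-constant length, $b^2$ ranges over an interval, so this forces either $K=0$ or $\mu+K=0$. The first gives $\Delta\equiv1$ and $\bar\mu=e^{-2\rho}\mu$; the second gives $\Delta=1-\f{\mu}{a^2}b^2$, and substituting the now-constant value $\f{-3a^2K}{a^2}=3\mu$ into the linear relation yields $(n+2)\mu=(n-1)\bar\mu e^{2\rho}$, i.e. $\bar\mu=\f{n+2}{n-1}e^{-2\rho}\mu$. These are precisely the two solutions, the latter being (\ref{dinewbbabvavdd}).

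The main obstacle I anticipate is purely computational: verifying that the messy $X$ in (\ref{yeangenga}), once the $\rho$-derivatives are killed and $T=0$ is substituted, really does collapse to the single linear relation displayed above. The remaining work—solving a separable ODE and a case split forced by the non-constancy of $b$—is routine. One should also record that the two solutions coincide when $\mu=0$ (both give $\Delta\equiv1$, $\bar\mu=0$), so nothing is lost in that degenerate case.
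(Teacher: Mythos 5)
Your proof is correct: with $\rho$ constant, $T$ from Lemma \ref{donienannggabdg} does collapse to $a^2\Delta(\Delta-1)-(a^2-\mu b^2)b^2\Delta'$, the separable ODE yields the family $\Delta=\frac{a^2-\mu b^2}{a^2-(\mu+K)b^2}$, and substituting $T=0$ back into $X$ gives exactly the linear relation $3a^2(1-\Delta)+(n-1)b^2(\mu-\bar\mu e^{2\rho})=0$, which (since $b^2$ is non-constant) forces $K=0$ or $K=-\mu$, producing the two stated solutions. The paper states this lemma without proof, but your route --- reduce to $X=T=0$, solve $T=0$ first, then substitute into $X=0$ --- is precisely the constant-$\rho$ specialization of the paper's general method in Lemma \ref{doadvvvvadgmanbg}, your integration constant $K$ corresponding to its $E$ via $K=-\mu-Ea^2e^{2\rho}$, so the approach is essentially the paper's own.
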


\begin{remark}
	When $\a$ is a metric with constant sectional curvature, then the corresponding metric determined by (\ref{dinewbbabvavdd}) is actually Fubini-Study metric when $\mu>0$ or Bergmann metric when $\mu<0$. See Example \ref{doaieningangga}  in Section \ref{adiannnndnnggbabbggg}  for details.
\end{remark}

\section{Some related discussions}\label{di4ufnang}
\begin{theorem}\label{doandnabbgbadfg}
	Suppose $\a$ and $\b$ satisfy all the assumptions in Theorem \ref{uenuangandgng}. Then after $\b$-deformation, $\ba$ has constant Ricci curvature $\bar\mu$, and meanwhile $\bb$ satisfies
	\begin{eqnarray}
		\bRbb=\bar\mu(\bar b^2\ba^2-\bb^2)\label{ianeingangngaagg}
	\end{eqnarray}
	if and only if $\Delta$ and $\rho$ are determined by Lemma \ref{wyeyyabndgagd}. Moreover, when $\nu=k(1-b^2\kappa)e^{2\rho}$, $\bb$ is a Killing form of $\ba$ and satisfies
	\begin{eqnarray}
		\btoo+(\bar a^2-\bar\mu \bar b^2)\ba^2+\bar\mu(\bar b^2\ba^2-\bb^2)
		-\f{\bar\mu}{\bar a^2-\bar\mu \bar b^2}\bso^2=0,\label{owemigneigaindnab}
	\end{eqnarray}
	in which $\bar a=\pm k a$.
\end{theorem}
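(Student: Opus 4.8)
The plan is to establish Theorem \ref{doandnabbgbadfg} in two stages corresponding to its two assertions. First I would characterize when $\bb$ has constant Ricci curvature $\bar\mu$ together with the curvature condition (\ref{ianeingangngaagg}); second, under the additional normalization $\nu=k(1-b^2\kappa)e^{2\rho}$ coming from Theorem \ref{KillingtoKillingu}, I would verify that $\bb$ becomes a Killing form of $\ba$ satisfying the deformed version (\ref{owemigneigaindnab}) of condition (\ref{owemigneigaindna}). The overall strategy is to show the whole package of hypotheses (\ref{owemigneigaindna}), (\ref{tuabgengand}) and constant Ricci curvature is preserved under $\b$-deformation, but only for the special solutions singled out in Lemma \ref{wyeyyabndgagd}.

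For the first assertion I would argue as follows. By Theorem \ref{uenuangandgng}, $\ba$ is Einstein with constant $\bar\mu$ exactly when $X=Y=Z=0$, and by Lemma \ref{donienannggabdg} this is equivalent to $X=T=0$ (using $1+2b^2\rho'\neq0$). The extra requirement is that $\bb$ again verify the curvature condition $\bRbb=\bar\mu(\bar b^2\ba^2-\bb^2)$; to impose this I would feed the deformation formula for $\Rbb$ from the Appendix into the expression $\bRbb-\bar\mu(\bar b^2\ba^2-\bb^2)$ and rewrite everything in terms of $\Delta$, $\rho$ and the basic invariants $\a^2$, $\b^2$, $\so^2$, exploiting the assumed identities (\ref{owemigneigaindna}), (\ref{tuabgengand}) and the priori relation (\ref{relation4ad}) to eliminate $\too$ and $\soho$. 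The resulting vanishing condition should, after combining with $X=T=0$, force an \emph{additional} ODE relating $\Delta$ and $\rho$. I expect this extra ODE to collapse to $\Delta\equiv1$: intuitively, only the pure conformal deformations (the $\Delta=1$ branch of Lemma \ref{wyeyyabndgagd}) preserve the strong pointwise curvature condition (\ref{tuabgengand}), whereas the genuinely stretched solutions change the sectional curvatures in planes containing $\b^\sharp$. Once $\Delta=1$ is established, Lemma \ref{wyeyyabndgagd} directly supplies $\rho$ and $\bar\mu$ in the two cases $\mu=0$ and $\mu\neq0$.

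For the second assertion, the normalization $\nu=k(1-b^2\kappa)e^{2\rho}$ is exactly the condition in Theorem \ref{KillingtoKillingu} guaranteeing that $\bb$ is again a Killing form of $\ba$, so that part is immediate. It then remains to check the transformed identity (\ref{owemigneigaindnab}), i.e.\ that the pair $(\ba,\bb)$ satisfies the same structural condition (\ref{owemigneigaindna}) with $\mu$ replaced by $\bar\mu$ and $a$ replaced by $\bar a=\pm ka$. The plan is to compute $\btoo$ from Proposition \ref{toounderdeformaion}, compute $\bso$ and $\bar b^2$ from Proposition \ref{sounderdeformations} and the length formula for $\bar b^2$, and substitute all of these together with $\Delta\equiv1$, $\nu=ke^{2\rho}$ into the left side of (\ref{owemigneigaindnab}). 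Using the original condition (\ref{owemigneigaindna}) to rewrite $\too$ in terms of $\a^2$, $\b^2$ and $\so^2$, everything should reduce to an algebraic identity in the invariants, pinning down $\bar a^2=k^2a^2$.

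The main obstacle will be the first assertion's bookkeeping: the deformation formula for $\Rbb$ in the Appendix is, by the paper's own admission, extremely long, and isolating from $\bRbb-\bar\mu(\bar b^2\ba^2-\bb^2)=0$ the single clean ODE that forces $\Delta\equiv1$ requires careful use of the constraints $X=T=0$ to suppress the many terms. The risk is that the curvature condition is not independent of $X=T=0$ but rather equivalent to a specific reduction of them; so the delicate point is to verify that the $\Delta\equiv1$ branch is genuinely the \emph{only} common solution, ruling out the $n=4$ Ricci-flat family of Lemma \ref{cingoangnag} and any stretched solution, rather than merely a possible one. Establishing that uniqueness, as opposed to the routine substitutions in the second part, is where the real work lies.
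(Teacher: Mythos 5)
Your outline follows the same route as the paper's proof (Appendix formula for $\Rbb$, reduction to the Einstein system of Theorem \ref{uenuangandgng}/Lemma \ref{donienannggabdg}, conclusion via Lemma \ref{wyeyyabndgagd}, and Theorem \ref{KillingtoKillingu} plus Propositions \ref{sounderdeformations} and \ref{toounderdeformaion} for the second assertion), but it has a genuine gap exactly at the point you yourself flag: you never supply the argument that forces $\Delta\equiv1$, you only \emph{expect} it, and the heuristic you offer (``only conformal deformations preserve the pointwise curvature condition'') is circular — it assumes the conclusion. The paper closes this gap with two concrete devices. First, after substituting Proposition \ref{Rbbunderdeformaion} and the hypotheses, the condition (\ref{ianeingangngaagg}) is put in the form $P(b^2\a^2-\b^2)+\frac{Q\Delta}{a^2-\mu b^2}\so^2=0$, and since $b^2\a^2-\b^2$ has rank $n-1$ while $\so^2$ has rank $1$ (and $n>2$), this splits into the \emph{two} separate scalar equations $P=0$ and $Q=0$ — your plan of ``rewriting in the invariants'' needs this rank/independence step to extract more than one equation. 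Second, and decisively, the paper exhibits an explicit algebraic identity expressing $3na^4\Delta^3(\Delta-1)$ as a linear combination of $X$, $Z$, $T$, $P$, $P'$ and $Q$ (with coefficient functions of $\Delta$, $\rho$, $b^2$); once all of $X=Z=T=P=Q=0$ hold, this identity gives $\Delta^3(\Delta-1)=0$, hence $\Delta=1$ because $\Delta=0$ is forbidden. This single identity is what rules out the stretched solutions and the $n=4$ Ricci-flat family of Lemma \ref{cingoangnag}; without it (or an equivalent elimination) your proof of necessity does not go through.

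Two smaller points. For sufficiency you must still verify that the $\Delta=1$ solutions of Lemma \ref{wyeyyabndgagd} actually satisfy $P=Q=0$ — it is not automatic from $X=T=0$, and the paper does this check explicitly. Your second assertion is handled essentially as in the paper and is fine: Theorem \ref{KillingtoKillingu} gives the Killing property, and (\ref{owemigneigaindnab}) with $\bar a=\pm ka$ follows by direct substitution into Propositions \ref{sounderdeformations} and \ref{toounderdeformaion}.
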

\begin{proof}
	Clearly, the property (\ref{ianeingangngaagg}) does not depend on the choice of $\nu$. By Proposition \ref{Rbbunderdeformaion} in Appendix, (\ref{ianeingangngaagg}) is equivalent to
	\begin{eqnarray}\label{donaidnnngabbgg}
		P(b^2\a^2-\b^2)+\f{Q\Delta}{(a^2-\mu b^2)}\so^2=0,
	\end{eqnarray}
	where
	\begin{eqnarray*}
		P&=&a^2\Delta^2-(a^2-\mu b^2)\left\{b^2\Delta'+\Delta(1+2b^2\rho')\right\}(1+2b^2\rho')-\bar\mu b^2\Delta e^{2\rho},\\
		Q&=&(a^2-\mu b^2)b^4Z-3a^2(\Delta-1)+2(n-3)b^4\left\{2(a^2-\mu b^2)(\rho''-\rho'^2)-\mu\rho'\right\}.
	\end{eqnarray*}
	Notice that $b^2\a^2-\b^2$ as a matrix is of rank $n-1$ but $\so^2$ is of rank $1$, so (\ref{donaidnnngabbgg}) holds if and only if $P=Q=0$ since $n>2$. On the other hand, $X=Y=Z=0$ by Theorem \ref{uenuangandgng} and $T=0$ by Lemma \ref{donienannggabdg}.
	
	As a result, we conclude $\Delta=1$. It is true due to the following fact
	\begin{eqnarray*}
		&&3na^4\Delta^3(\Delta-1)\\
		&=&(n-3)a^2b^2\Delta^2X-na^2\Delta^3(a^2-\mu b^2)b^4Z+(n-3)\big\{(n+1)a^2\Delta^2+(a^2-\mu b^2)\\
		&&\cdot(1+2b^2\rho')[\Delta(1+2b^2\rho')+b^2\Delta']\big\}T+(n-3)\big\{(n-1)a^2\Delta^2+2(a^2-\mu b^2)\\
		&&\cdot b^2\Delta'(1+2b^2\rho')\big\}P+2(n-3)(a^2-\mu b^2)b^2\Delta(1+2b^2\rho')P'-\Delta^2\big\{na^2\Delta\\
		&&+(n-3)(a^2-\mu b^2)(1+2b^2\rho')^2\big\}Q.
	\end{eqnarray*}
	
	Hence, the necessity holds by Lemma \ref{wyeyyabndgagd}. And the sufficiency holds by verifying that $\Delta$ and $\rho$ satisfy $P=Q=0$.
	
	Finally, according to Theorem \ref{KillingtoKillingu}, $\bb$ becomes a Killing form if and only if $\nu=k(1-b^2\kappa)e^{2\rho}$. In this case, (\ref{owemigneigaindnab}) holds due to Proposition \ref{sounderdeformations} and Proposition \ref{toounderdeformaion}.\qed
\end{proof}

\begin{theorem}\label{tebhagnagad}
	Assume dimension $n>2$, $\a$ has constant sectional curvature $\mu$, and $\b$ is a Killing form of $\a$ with non-constant length and satisfying  (\ref{owemigneigaindn}) in which $a\neq0$. Then after $\b$-deformation, $\ba$ has constant sectional curvature $\bar\mu$ if and only if $\Delta$ and $\rho$ are determined by Lemma \ref{wyeyyabndgagd}. Moreover, when $\nu=k(1-b^2\kappa)e^{2\rho}$, $\bb$ is a Killing form of $\ba$ satisfying (\ref{owemigneigaindnab}) in which $\bar a=\pm k a$.
\end{theorem}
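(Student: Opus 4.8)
The plan is to derive Theorem~\ref{tebhagnagad} from Theorem~\ref{doandnabbgbadfg}, exploiting the fact that constant sectional curvature is strictly stronger than the hypotheses ``Einstein plus condition (\ref{tuabgengand})'' already assumed there. First I would record the elementary observation that if $\a$ has constant sectional curvature $\mu$, then writing $\RIk=\mu(\a^2\dIk-\yI\yk)$ and contracting with $\bi\bK$ gives $\Rbb=\mu(b^2\a^2-\b^2)$, so condition (\ref{tuabgengand}) holds automatically; of course $\a$ is then Einstein with Ricci constant $\mu$. Thus the full hypotheses of Theorem~\ref{uenuangandgng} and Theorem~\ref{doandnabbgbadfg} are in force and may be quoted freely.

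For the necessity direction, suppose $\ba$ has constant sectional curvature $\bar\mu$. Then $\ba$ is in particular Einstein with Ricci constant $\bar\mu$, and applying the same contraction to the curvature tensor of $\ba$ shows $\bRbb=\bar\mu(\bar b^2\ba^2-\bb^2)$, which is precisely condition (\ref{ianeingangngaagg}). Hence Theorem~\ref{doandnabbgbadfg} applies and forces $\Delta$ and $\rho$ to be of the form given in Lemma~\ref{wyeyyabndgagd}.

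The sufficiency direction is where the genuine work lies, and it is the step I expect to be the main obstacle. Theorem~\ref{doandnabbgbadfg} only guarantees that the Lemma~\ref{wyeyyabndgagd} data make $\ba$ Einstein and pin down the single component $\bRbb$; since $n$ may exceed $3$, this falls short of full constant sectional curvature. To close the gap I would use that every solution in Lemma~\ref{wyeyyabndgagd} has $\Delta=1$, so the deformation is purely conformal, $\ba=e^{\rho(b^2)}\a$. I would then pass to the explicit projective model (\ref{inbadbgamgddandab}) for $\ab$, in which $b^2=\frac{4a^2|x|^2}{(1+\mu|x|^2)^2}$, substitute the explicit $\rho$ from (\ref{dinewbbabvavda}) or (\ref{dinewbbabvavdb}), and simplify $e^{\rho(b^2)}\a$. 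The claim is that the result is again a metric of the standard constant-curvature shape $\frac{2|y|}{1+\bar\mu|x|^2}$ up to an overall constant rescaling, from which constant sectional curvature $\bar\mu$ (with the value recorded in Lemma~\ref{wyeyyabndgagd}) can be read off directly; a cleaner but equivalent phrasing is that the deformations with $\Delta=1$ generate the conformal group of the model and hence act transitively within the class of constant-curvature metrics, so they preserve that class. Checking that the curvature constant matches the stated $\bar\mu$ is then a short computation once the normalization is fixed.

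Finally, the ``moreover'' assertion is immediate. Since the hypotheses of Theorem~\ref{doandnabbgbadfg} hold and $\Delta,\rho$ are of Lemma~\ref{wyeyyabndgagd} type, its moreover clause transfers verbatim: by Theorem~\ref{KillingtoKillingu} the choice $\nu=k(1-b^2\kappa)e^{2\rho}$ makes $\bb$ a Killing form of $\ba$, and (\ref{owemigneigaindnab}) holds with $\bar a=\pm ka$. No separate computation is needed beyond invoking that earlier result.
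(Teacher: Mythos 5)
Your necessity argument and your treatment of the ``moreover'' clause are exactly the paper's: constant sectional curvature of $\ba$ yields both the Einstein condition and (\ref{ianeingangngaagg}), so Theorem \ref{doandnabbgbadfg} forces $\Delta$ and $\rho$ to be as in Lemma \ref{wyeyyabndgagd}, and the Killing property of $\bb$ together with (\ref{owemigneigaindnab}) transfers from that theorem via Theorem \ref{KillingtoKillingu}. The gap is in sufficiency. The theorem quantifies over \emph{all} pairs $(\a,\b)$ with $\a$ of constant curvature $\mu$ and $\b$ a Killing form of non-constant length satisfying (\ref{owemigneigaindn}); your proof verifies the conclusion only for the concrete pair (\ref{inbadbgamgddandab}). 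To ``pass to the explicit projective model for $(\a,\b)$'' you need a normal-form lemma: every admissible pair is locally isometric to (\ref{inbadbgamgddandab}). Neither your proposal nor the paper establishes this (the paper never claims it; its classification Theorem \ref{ainnandnngnadg} \emph{assumes} $\b=ar^2\sigma_1$ rather than deriving that form from (\ref{owemigneigaindn})), and it is not a formality: it amounts to classifying, up to isometry, the Killing forms on a space form whose $\sij$ satisfies the algebraic constraint (\ref{owemigneigaindn}). Your fallback phrasing, that the $\Delta=1$ deformations ``generate the conformal group of the model and hence act transitively within the class of constant-curvature metrics, so they preserve that class,'' is circular: a generic conformal rescaling destroys constant curvature, and the claim that these particular factors preserve it is precisely what has to be proved.

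By contrast, the paper's sufficiency proof is model-free: it plugs $\Delta=1$, the $\rho$ of Lemma \ref{wyeyyabndgagd}, the Killing identities, (\ref{owemigneigaindn}) and $\RIk=\mu(\a^2\dIk-\yI\yk)$ into the Riemann-tensor deformation formula of Proposition \ref{RIkunderdeformaion} and reads off $\bRIk=\bar\mu(\ba^2\dIk-\yI\bar y_k)$. If you want to keep your conformal viewpoint (which is more conceptual than the appendix computation), the gap can be closed intrinsically rather than through the model: since $\b$ is Killing, $(b^2)_{|i}=2s_i$, and polarizing (\ref{relation4ad}) with $\RIk=\mu(\a^2\dIk-\yI\yk)$ and then inserting (\ref{owemigneigaindn}) gives
\begin{eqnarray*}
	s_{i|j}=-t_{ij}-\mu(b^2\aij-\bi\bj)=(a^2-\mu b^2)\aij-\f{\mu}{a^2-\mu b^2}s_is_j,
\end{eqnarray*}
from which a two-line computation shows $\mathrm{Hess}\bigl(\sqrt{a^2-\mu b^2}\bigr)=-\mu\sqrt{a^2-\mu b^2}\,\aij$ when $\mu\neq0$, and $\mathrm{Hess}(b^2)=2a^2\aij$ when $\mu=0$. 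Hence $u:=C+D\sqrt{a^2-\mu b^2}$ (resp. $u:=C+Db^2$) is a concircular function on the space form, and the classical fact that $\a^2/u^2$ then again has constant sectional curvature, equal to $u^2\mu-\|\nabla u\|^2_{\a}+2u\psi$ where $\mathrm{Hess}\,u=\psi\,\aij$, gives exactly $\bar\mu=(C^2-D^2a^2)\mu$ (resp. $\bar\mu=4CDa^2$) for $\ba=e^{\rho}\a=\a/u$, for \emph{every} admissible $(\a,\b)$. With that lemma inserted in place of the passage to the model, your argument becomes a complete and genuinely different proof of the sufficiency half.
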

\begin{proof}
	Sufficiency:~Combining the given data $\Delta$, $\rho$ and $\bar\mu$ with the conditions on $(\a,\b)$, by Proposition \ref{RIkunderdeformaion} in Appendix we have $\bRIk=\bar\mu(\ba^2\dIk-\yI\bar y_k)$ in which $\bar y_k:=\bar a_{kl}y^l$.
	
	Necessity:~It is true due to Theorem \ref{doandnabbgbadfg}.\qed
\end{proof}

Consider the set, denoted by $\Lambda$, of the pairs $(\a,\b)$ where $\a$ is an Einstein metric and $\b$ is a Killing form of $\a$ satisfying all the restrictions in Theorem \ref{uenuangandgng}. It is clearly that the related deformations in Lemma \ref{wyeyyabndgagd} generate a transformation group $G$ acting on $\Lambda$. In particular, all the non Ricci-flat Einstein metrics and their Killing forms satisfying such conditions can be obtained by Ricci-flat Einstein metrics  and their Killing forms satisfying the same conditions, and vice versa.

On the other hand, the subset of $\Lambda$ with $\a$ being of constant sectional curvature is closed and transitive under the action of $G$. We don't know whether exist data $(\a,\b)$ in which $\a$ is of non-constant sectional curvature in $\Lambda$.
Perhaps there is only one orbit.

Our purpose is to construct new Einstein metrics using some known ones. Combining with Lemma \ref{bbchange}, it is easy to see that we can only consider those metrics which are Ricci flat if we want to do it locally. Meanwhile, we can, without loss of generality, always take $a=1$. In this case, the condition (\ref{owemigneigaindnab}) is simplified as
\begin{eqnarray}\label{doandnnanngnnga}
	\too=-\a^2.
\end{eqnarray}
However, sometimes maybe it is more natural to take the deformation beginning with a non Ricci-flat metric instead of a Ricci-flat metric. See Example \ref{doaieningangga} in Section \ref{adiannnndnnggbabbggg} for instance.

\begin{remark}\label{dinnaindbagfffafdg}
	Notice that $\{s_{ij}\}$ is an antisymmetric matrix, (\ref{doandnnanngnnga}) implies that the rank of $\{a_{ij}\}$ must be even. Combining with the about discussion, we know that the crucial condition (\ref{owemigneigaindn}) can only happen on even dimensional manifolds. Moreover, (\ref{doandnnanngnnga}) implies that $\{s_{ij}\}$ owns maximal rank.
\end{remark}

\section{General solutions of equations $X=T=0$ and a classification result}\label{dimiunuang}
\begin{lemma}\label{doadvvvvadgmanbg}
	Assume $n=2m$, $m\geq2$ is an integer. The solutions of $X=0$ and $T=0$ with a restriction $1+2b^2\rho'\neq0$ can be determined by
	\begin{eqnarray}
		\Delta(b^2)=\f{(a^2-\mu b^2)(1+2b^2\rho')^2}{a^2(1+Eb^2e^{2\rho})}\label{duaudbabdbgadm}
	\end{eqnarray}
	and
	\begin{eqnarray}
		\ln(b^2)=\int f(\varrho)\,\ud\varrho~(\mu=0)\label{iadnnanadgnada}
	\end{eqnarray}
	or
	\begin{eqnarray}
		\ln\left|\frac{a-\sqrt{a^2-\mu b^2}}{a+\sqrt{a^2-\mu b^2}}\right|=\int f(\varrho)\,\ud\varrho~(\mu\neq0),\label{iadnnanadgnadb}
	\end{eqnarray}
	where $E$ is a constant, $\varrho:=b^2e^{2\rho}$. The function $f$ is difined by 
	\begin{eqnarray*}
		f(\varrho)=\pm\frac{1}{\sqrt{F\varrho^{2-m}+\varrho^2-\frac{2m-1}{2m+2}\cdot\frac{\bar\mu}{a^2}\varrho^3}}
	\end{eqnarray*}
	when $E=0$, or
	\begin{eqnarray*}
		f(\varrho)=\pm\frac{\varrho^\frac{m-2}{2}}{\sqrt{F\iota^3-E^{-m-1}\iota^2\left\{\frac{\bar\mu}{a^2}\iota^{2m}
				+\sum\limits_{k=0}^{m-1}\frac{(-1)^{m-k}}{2k-1}\binom{m}{k}\left[2(m-k)E+(2m-1)\frac{\bar\mu}{a^2}\right]
				\iota^{2k}\right\}}}
	\end{eqnarray*}
	when $E\neq0$, in which $F$ is a constant, $\iota:=\sqrt{1+E\varrho}$.
\end{lemma}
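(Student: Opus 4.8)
The plan is to solve the coupled ODE system $X=0$ and $T=0$ from Lemma \ref{donienannggabdg} by first extracting an algebraic expression for $\Delta$ from one equation and then reducing the remaining equation to a single integrable first-order ODE via a clever change of variables. The key observation is that the natural variable here is $\varrho:=b^2e^{2\rho}$ rather than $b^2$ itself, since the conformal factor $e^{2\rho}$ and the stretch factor enter the curvature formulae in the combination $1+2b^2\rho'$ (which, up to a factor, is $\varrho'/e^{2\rho}$ times elementary terms). First I would manipulate the equation $X=0$ in \eqref{yeangenga} algebraically to isolate $\Delta$ in closed form: grouping the terms involving $\Delta$, $\Delta'$, and $(1+2b^2\rho')$, I expect $X=0$ to be solvable for $\Delta$ as a function of $b^2$, $\rho$, and $\rho'$, yielding precisely the expression \eqref{duaudbabdbgadm}, namely
\begin{eqnarray*}
	\Delta=\f{(a^2-\mu b^2)(1+2b^2\rho')^2}{a^2(1+Eb^2e^{2\rho})},
\end{eqnarray*}
where the constant $E$ arises as a constant of integration. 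The cleanest route is probably to treat $X=0$ as a linear first-order ODE in an auxiliary unknown built from $\Delta$ and $(1+2b^2\rho')$, integrate it once, and absorb the integration constant as $E$.

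Next I would substitute this formula for $\Delta$ into $T=0$ from \eqref{einangnang}. Since $\Delta$ is now expressed through $\rho$ and $\rho'$, the equation $T=0$ becomes a second-order ODE purely in $\rho(b^2)$. To linearize and integrate it, I would pass to the variable $\varrho=b^2e^{2\rho}$; this is natural because $\Delta$ in \eqref{duaudbabdbgadm} depends on $\rho$ only through the combination $1+Eb^2e^{2\rho}=1+E\varrho$ and through $1+2b^2\rho'$, and the latter is expressible via $d\varrho/db^2$. Computing $\tfrac{d\varrho}{db^2}=e^{2\rho}(1+2b^2\rho')$ shows that $1+2b^2\rho'=\varrho'/e^{2\rho}=b^2\varrho'/\varrho$, so all the building blocks of $T$ become rational expressions in $\varrho$ and $b^2\varrho'$. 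After this substitution I expect $T=0$ to reduce to a separable or exact first-order ODE relating $b^2$ (or $\sqrt{a^2-\mu b^2}$) to $\varrho$, which upon separation produces the integral representations \eqref{iadnnanadgnada} and \eqref{iadnnanadgnadb}, with the left-hand sides $\ln(b^2)$ (when $\mu=0$) or $\ln\left|\frac{a-\sqrt{a^2-\mu b^2}}{a+\sqrt{a^2-\mu b^2}}\right|$ (when $\mu\neq0$) being exactly the antiderivatives of the $b$-dependent coefficient.

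The remaining work is to identify the integrand $f(\varrho)$ explicitly and to split into the cases $E=0$ and $E\neq0$. Once $T=0$ is written as $\left(\tfrac{d\varrho}{d(\text{left-hand side})}\right)$ equals some function of $\varrho$, I would recognize that $(d\varrho/ds)^2$ equals a rational-times-power function of $\varrho$; taking the square root gives $f(\varrho)=\pm(\cdots)^{-1/2}$. When $E=0$ the denominator under the root collapses to $F\varrho^{2-m}+\varrho^2-\tfrac{2m-1}{2m+2}\tfrac{\bar\mu}{a^2}\varrho^3$, and when $E\neq0$ the substitution $\iota=\sqrt{1+E\varrho}$ rationalizes the expression into the displayed binomial sum. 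The main obstacle I anticipate is precisely this last bookkeeping step: verifying that integrating the second-order equation $T=0$ once yields a \emph{single} integration constant $F$ entering as a first integral, and that the resulting algebraic combination organizes into the stated binomial coefficients $\binom{m}{k}$ with the factors $\frac{(-1)^{m-k}}{2k-1}$ and $2(m-k)E+(2m-1)\tfrac{\bar\mu}{a^2}$. This requires carefully tracking how powers of $\varrho$ produced by differentiating $\Delta$ (which carries the factor $(1+E\varrho)^{-1}$) recombine after the $\iota$-substitution, and confirming that the dimension $n=2m$ enters only through these exponents and coefficients. I would verify the final integrand by differentiating the claimed first integral back and checking it reproduces $T=0$, rather than integrating forward, since the backward check is mechanical and less error-prone.
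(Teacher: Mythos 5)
Your overall architecture matches the paper's proof --- solve one of the two equations in closed form for $\Delta$, substitute into the other to obtain an ODE for $\varrho=b^2e^{2\rho}$, and integrate it once with $F$ as a first integral --- but you have swapped the roles of the two equations, and this swap is a genuine error, not a matter of presentation. The closed-form expression (\ref{duaudbabdbgadm}) is the general solution of $T=0$, not of $X=0$. Writing $w:=1+2b^2\rho'$ and dividing $T$ by $\Delta$, the equation $T=0$ reads
\begin{eqnarray*}
	a^2\Delta-(a^2-\mu b^2)b^2w\,(\ln\Delta)'-(a^2-\mu b^2)(w^2-2b^2w')-\mu b^2w=0,
\end{eqnarray*}
which is linear in $1/\Delta$, is free of $\bar\mu$, and has an elementary integrating factor; its general solution is exactly (\ref{duaudbabdbgadm}), with $E$ arising as the integration constant. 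One can check directly that (\ref{duaudbabdbgadm}) satisfies $T=0$ identically for \emph{every} $\rho$, using $b^2\varrho'=\varrho(1+2b^2\rho')$. By contrast, $X$ contains the term $-(n-1)\bar\mu b^2\Delta e^{2\rho}$, which survives division by $\Delta$, so any formula for $\Delta$ obtained by solving $X=0$ must involve $\bar\mu$ (and non-elementary integrals of $\rho$-dependent coefficients); it cannot be the $\bar\mu$-free expression (\ref{duaudbabdbgadm}). Indeed, if (\ref{duaudbabdbgadm}) satisfied $X=0$ identically, the system $X=T=0$ would impose no condition on $\rho$ at all, contradicting Lemma \ref{cingoangnag}. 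In the paper it is precisely the substitution of (\ref{duaudbabdbgadm}) into $X=0$ that produces the remaining constraint (\ref{yetangebg}), equivalently (\ref{uabebbgagad}).

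A secondary point: your expectation that the remaining equation becomes ``separable or exact'' misses the key trick. Equation (\ref{uabebbgagad}) is second order in $\varrho(b^2)$; the paper exchanges the dependent and independent variables, taking $u=\ln(b^2)$ (or the logarithmic expression in (\ref{iadnnanadgnadb}) when $\mu\neq0$) as a function of $v=\varrho$, whereupon the equation becomes a \emph{linear} first-order ODE in the unknown $1/f^2$, where $f=\ud u/\ud v$. It is this linearity that guarantees the first integral $F$ enters affinely under the square root and that the solution organizes into the stated elementary form (and, after the substitution $\iota=\sqrt{1+E\varrho}$, into the displayed binomial sum). Your fallback of verifying the claimed first integral by back-differentiation is sound methodology, but as written your first step produces the wrong $\Delta$, so that verification would fail rather than close the argument.
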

\begin{proof}
	Solving the equation $T=0$ yields (\ref{duaudbabdbgadm}). As a result, the equation $X=0$ reads
	\begin{eqnarray}\label{yetangebg}
		&&na^2+2\mu b^2(1+2b^2\rho')-\left\{(n-3)(a^2-\mu b^2)+\f{3(a^2-\mu b^2)}{1+Eb^2e^{2\rho}}\right\}(1+2b^2\rho')^2\nonumber\\
		&&-4(a^2-\mu b^2)b^2(1+2b^2\rho')'-(n-1)\bar\mu b^2e^{2\rho}=0,
	\end{eqnarray}
	or equivalently,
	\begin{eqnarray}\label{uabebbgagad}
		&&4(a^2-\mu b^2)b^4\varrho\varrho''+(a^2-\mu b^2)\left(n-7+\f{3}{1+E\varrho}\right)b^4\varrho'^2\nonumber\\
		&&+2(2a^2-3\mu b^2)b^2\varrho\varrho'-na^2\varrho^2+(n-1)\bar\mu\varrho^3=0.
	\end{eqnarray}
	where $\varrho(b^2):=b^2e^{2\rho}$.
	
	Since $1+2b^2\rho'\neq0$, $\varrho$ is not a constant. Take $u=\ln(b^2)$, $v=\varrho>0$ when $\mu=0$ and $u=\ln\left|\frac{a-\sqrt{a^2-\mu b^2}}{a+\sqrt{a^2-\mu b^2}}\right|$, $v=\varrho>0$ when $\mu\neq0$, then
	\begin{eqnarray*}
		\varrho'(b^2)=\frac{\ud u}{\ud b^2}\cdot\left(\f{\ud u}{\ud v}\right)^{-1},\quad
		\varrho''(b^2)=\frac{\ud^2 u}{\ud(b^2)^2}\cdot\left(\f{\ud u}{\ud v}\right)^{-1}-\left(\frac{\ud u}{\ud b^2}\right)^2\cdot\f{\ud^2 u}{\ud v^2}\cdot\left(\f{\ud u}{\ud v}\right)^{-3}.
	\end{eqnarray*}
	As a result, (\ref{uabebbgagad}) turns to be a linear equation about $\frac{1}{f^2}$, where $f(v):=\frac{\ud u}{\ud v}$, as below
	\begin{eqnarray*}
		2a^2(1+Ev)v\frac{\ud\frac{1}{f^2}}{\ud v}+a^2[(n-7)(1+Ev)+3]\frac{1}{f^2}+(1+Ev)v^2[(n-1)\bar\mu v-na^2]=0.
	\end{eqnarray*}
	Its solutions are listed in Lemma \ref{doadvvvvadgmanbg}.\qed
\end{proof}
\begin{remark}\label{dinnabbbbgmamgbg}
	When $E\neq0$, $f(\varrho)$ can also be expressed as
	\begin{eqnarray*}
		f(\varrho)=\pm\frac{\varrho^\frac{m-2}{2}}{\sqrt{F\iota^3+\iota^2 \varrho^m-(2m-1)E^{-m-1}(\frac{\bar\mu}{a^2}+E)\iota^3\int\iota^{-2}(\iota^2-1)^m\,\ud\iota}}.
	\end{eqnarray*}
\end{remark}

For the Euclidean metric, there exist $n-1$ $1$-forms $\sigma_1$, $\sigma_2$, $\cdots$, $\sigma_{n-1}$ such that it can be expressed locally as a warped product
\begin{eqnarray*}
	\a^2=\ud r^2+r^2(\sigma_1^2+\sigma_2^2+\cdots+\sigma_{n-1}^2),
\end{eqnarray*}
and meanwhile the $1$-form $\b$ in (\ref{inbadbgamgd}) is given by $\b=r^2\sigma_1$. According to Theorem \ref{tebhagnagad}, the following metric
\begin{eqnarray*}
	\bar\a^2=\f{1}{(C+Dr^2)^2}\ud r^2+\f{r^2}{(C+Dr^2)^2}(\sigma_1^2+\sigma_2^2+\cdots+\sigma_{n-1}^2)
\end{eqnarray*}
has constant sectional curvature $4CD$ and the $1$-form $\bb=\f{ar^2}{(C+Dr^2)^2}\sigma_1$ satisfies our additional condition (\ref{owemigneigaindn}). The above metric can be normalized as
\begin{eqnarray*}
	\bar\a^2=\frac{1}{1-\bar\mu\bar r^2}\ud\bar r^2+\bar r^2(\sigma_1^2+\sigma_2^2+\cdots+\sigma_{n-1}^2)
\end{eqnarray*}
after reparameterization by setting $\bar r=\f{r}{C+Dr^2}$. In this case $\bb=a\bar r^2\sigma_1$. In particular, (\ref{inbadbgamgddanda}) can be expressed as a warped product locally. As a result, we obtain the following classification.

\begin{theorem}\label{ainnandnngnadg}
	Assume $\a$ has constant sectional curvature $\mu$ and is expressed locally as
	\begin{eqnarray*}
		\a^2=\frac{1}{1-\mu r^2}\ud r^2+r^2(\sigma_1^2+\sigma_2^2+\cdots+\sigma_{n-1}^2),
	\end{eqnarray*}
	and $\b=ar^2\sigma_1$ is a Killing form of $\a$ satisfying (\ref{owemigneigaindn}). Then after $\b$-deformation, $\ba$ has constant Ricci curvature $\bar\mu$ if and only if, after necessary renormalization, $\bar\a$ can be expressed as
	\begin{eqnarray}\label{soaidnagnnaongnga}
		\bar\a^2&=&a^4r^4f^2(a^2r^2)\ud r^2+\f{1}{1+Ea^2r^2}\cdot\f{1}{a^4r^2f^{2}(a^2r^2)}\sigma_1^2+r^2(\sigma_2^2+\cdots+\sigma_{n-1}^2),
	\end{eqnarray}
	where $f$ is defined in Lemma \ref{doadvvvvadgmanbg}.
\end{theorem}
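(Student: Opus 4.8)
The plan is to reduce the statement to a direct change-of-variables computation, since the Einstein condition has already been fully encoded analytically. By Theorem~\ref{uenuangandgng} together with Lemma~\ref{donienannggabdg}, the metric $\ba$ is Einstein with Ricci constant $\bar\mu$ exactly when $X=T=0$, and Lemma~\ref{doadvvvvadgmanbg} records the general solution of this system (under $1+2b^2\rho'\neq0$) through the function $f$ and the formula~(\ref{duaudbabdbgadm}) for $\Delta$. Thus the only remaining task is to substitute this solution into the deformed metric and rewrite it as a warped product.

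First I would compute $b^2$ for the given data. Since the coefficient of $\sigma_1^2$ in $\a^2$ is $r^2$, the squared $\a$-norm of $\sigma_1$ is $\f{1}{r^2}$, and hence $b^2=\|\b\|_\a^2=a^2r^4\cdot\f{1}{r^2}=a^2r^2$. Consequently $\f{\b^2}{b^2}=r^2\sigma_1^2$, so that $\a^2-\f{\b^2}{b^2}$ is exactly the warped product with the $\sigma_1^2$-term deleted. Feeding this into the conformally invariant expression~(\ref{Drho}) gives
\begin{eqnarray*}
	\ba^2=e^{2\rho}\left\{\f{1}{1-\mu r^2}\ud r^2+\Delta r^2\sigma_1^2+r^2(\sigma_2^2+\cdots+\sigma_{n-1}^2)\right\},
\end{eqnarray*}
where $\rho=\rho(a^2r^2)$ and $\Delta=\Delta(a^2r^2)$. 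This already has the shape~(\ref{domainnaidnanng}); what remains is a renormalization of the radial variable. Setting $\tilde r:=e^{\rho}r$ makes the coefficient of $\sigma_2^2+\cdots+\sigma_{n-1}^2$ equal to $\tilde r^2$, and the key identity $a^2\tilde r^2=b^2e^{2\rho}=\varrho$ links the new coordinate to the variable $\varrho$ appearing in Lemma~\ref{doadvvvvadgmanbg}.

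The heart of the computation is to express $\ud r$ in terms of $\ud\tilde r$ via the chain rule, using $f=\f{\ud u}{\ud\varrho}$ with $u=\ln(b^2)$ when $\mu=0$ and $u=\ln\left|\f{a-\sqrt{a^2-\mu b^2}}{a+\sqrt{a^2-\mu b^2}}\right|$ when $\mu\neq0$. A short calculation gives $\ud u=\f{2a}{rw}\ud r$ with $w:=\sqrt{a^2-\mu b^2}=a\sqrt{1-\mu r^2}$, which specializes correctly to the case $\mu=0$; hence $\ud r=arw\tilde r f\,\ud\tilde r$ and therefore $\f{e^{2\rho}}{1-\mu r^2}\ud r^2=a^4\tilde r^4 f^2\,\ud\tilde r^2$ after using $e^{2\rho}r^2=\tilde r^2$. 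This is precisely the desired $\ud r^2$-coefficient once $\varrho=a^2\tilde r^2$ is substituted. For the $\sigma_1^2$-coefficient the plan is to show $\Delta\tilde r^2=\f{1}{(1+Ea^2\tilde r^2)a^4\tilde r^2 f^2}$, i.e.\ $\Delta=\f{1}{(1+E\varrho)\varrho^2 f^2}$. The decisive algebraic facts are $1+2b^2\rho'=\f{b^2\varrho'}{\varrho}$, coming directly from $\varrho=b^2e^{2\rho}$, and $b^2w\varrho'=\f{a}{f}$, coming from the definition of $f$; together they give $(1+2b^2\rho')^2=\f{a^2}{(a^2-\mu b^2)\varrho^2 f^2}$, and plugging this into~(\ref{duaudbabdbgadm}) collapses it to the claimed clean form.

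The main obstacle I anticipate is precisely this bookkeeping of the chain rule under the implicit, piecewise definition of $u$ (and hence $f$): one must check that the $\mu=0$ and $\mu\neq0$ branches, together with the factors $w$, $e^{2\rho}$ and $\varrho'$, recombine into a single uniform expression, so that the final warped-product coefficients are free of $\mu$-dependent artefacts. Everything else is routine substitution. Finally, the degenerate possibilities are disposed of separately: $1+2b^2\rho'=0$ is forbidden since it forces $\Delta=0$, and the branch $\Delta\equiv1$ corresponds to the constant-curvature solutions already obtained in Lemma~\ref{wyeyyabndgagd}, so the formula~(\ref{soaidnagnnaongnga}) captures exactly the remaining non-trivial deformations.
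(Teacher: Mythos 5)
Your proposal is correct and follows essentially the same route as the paper's proof: both reduce the Einstein condition to $X=T=0$ via Theorem \ref{uenuangandgng} and Lemmas \ref{donienannggabdg}, \ref{doadvvvvadgmanbg}, introduce the same new radial coordinate $\tilde r=e^{\rho}r=\sqrt{\varrho(a^2r^2)}/a$, and use the defining relations of $f$ and (\ref{duaudbabdbgadm}) to convert the $\ud r^2$- and $\sigma_1^2$-coefficients into the form (\ref{soaidnagnnaongnga}). Your identities $1+2b^2\rho'=b^2\varrho'/\varrho$ and $b^2w\varrho'=a/f$ are exactly the algebra the paper carries out (there written as $\varrho'=\ud(\bar r^2)/\ud(r^2)$), so the two computations are equivalent.
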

\begin{proof}
	Notice that $b^2=a^2r^2$. By Theorem \ref{uenuangandgng}, after $\b$-deformation,
	\begin{eqnarray*}
		\ba^2&=&e^{2\rho(b^2)}(\alpha^2-\kappa(b^2)\b^2)\nonumber\\
		&=&\frac{e^{2\rho(a^2r^2)}}{1-\mu r^2}\ud r^2+e^{2\rho(a^2r^2)}\Delta(a^2r^2)r^2\sigma_1^2
		+e^{2\rho(a^2r^2)}r^2(\sigma_2^2+\cdots+\sigma_{n-1}^2)
	\end{eqnarray*}
	has constant Ricci curvature $\bar\mu$ if and only if $\Delta$ and $\rho$ satisfy Eqs. (\ref{yeangenga}).
	
	In general, $\rho$ is not an elementary function. However, $\ba$ can be expressed explicitly by some suitable variable substitution.
	
	Take $\bar r:=\sqrt{\varrho(a^2r^2)}/a$, then by (\ref{iadnnanadgnada})-(\ref{iadnnanadgnadb}) we have
	\begin{eqnarray}
		\ud r=ar\sqrt{a^2-\mu a^2r^2}\cdot \bar rf(a^2\bar r^2)\,\ud \bar r.\label{donnngggabbdbg}
	\end{eqnarray}
	On the other hand, (\ref{duaudbabdbgadm}) and (\ref{donnngggabbdbg}) yields
	\begin{eqnarray*}
		\Delta(a^2r^2)
		&=&\frac{1-\mu r^2}{1+Ea^2\bar r^2}\cdot\left[\f{r^2\varrho'(a^2r^2)}{\bar r^2}\right]^2\\
		&=&\frac{1-\mu r^2}{1+Ea^2\bar r^2}\cdot\left[\f{r^2\ud(\bar r^2)}{\bar r^2\ud(r^2)}\right]^2\\
		&=&\f{1}{1+Ea^2\bar r^2}\cdot\f{1}{a^4\bar r^{4}f^{2}(a^2\bar r^2)}.\label{donnngggabbdbg2}
	\end{eqnarray*}
	Hence,
	\begin{eqnarray*}
		\ba^2&=&\frac{e^{2\rho(a^2r^2)}}{1-\mu r^2}\ud r^2+e^{2\rho(a^2r^2)}\Delta(a^2r^2)r^2\sigma_1^2
		+e^{2\rho(a^2r^2)}r^2(\sigma_2^2+\cdots+\sigma_{n-1}^2)\\
		&=&a^4\bar r^4f^2(a^2\bar r^2)\ud \bar r^2+\f{1}{1+Ea^2\bar r^2}\cdot\f{1}{a^4\bar r^2f^{2}(a^2\bar r^2)}\sigma_1^2+\bar r^2(\sigma_2^2+\cdots+\sigma_{n-1}^2).
	\end{eqnarray*}\qed
\end{proof}
\begin{remark}
	After scaling $\dot{\ba}=\frac{1}{k}\ba$, $\dot{\ba}$ can be expressed as the form (\ref{soaidnagnnaongnga}) by reparameterization $r\rightarrow kr$, with the corresponding quantities determined by $\dot{\bar\mu}=k^2\bar\mu$, $\dot E=k^2E$ and $\dot F=k^{-n}F$. Hence, Theorem \ref{ainnandnngnadg} actually provides a two-parameters Einstein metric for any given dimension $n$ and Ricci constant $\bar\mu$.
\end{remark}

\section{$4$-dimensional Einstein metrics}\label{sinainhhdngnnag}
This is a special dimension not only in mathematics but also in physics. Here we just list all the Riemann-Einstein metrics that we can obtain by our way. But it is clear that our method is suitable for Lorentz metrics too.

Take the standard Cartan-Maurer forms on $\mathbb S^3$
\begin{eqnarray*}
	&\sigma_1=\f{1}{r^2}(x^2\,\ud x^1-x^1\,\ud x^2+x^4\,\ud x^3-x^3\,\ud x^4),&\\
	&\sigma_2=\f{1}{r^2}(x^3\,\ud x^1-x^4\,\ud x^2-x^1\,\ud x^3+x^2\,\ud x^4),&\\
	&\sigma_3=\f{1}{r^2}(x^4\,\ud x^1+x^3\,\ud x^2-x^2\,\ud x^3-x^1\,\ud x^4),&
\end{eqnarray*}
in which $r:=\sqrt{(x^1)^2+(x^2)^2+(x^3)^2+(x^4)^2}$. Then the standard Euclidean metric on $\mathbb R^4$ is given by
\begin{eqnarray*}
	g_{\mathbb R^4}=\ud r^2+r^2(\sigma_1^2+\sigma_2^2+\sigma_3^2).
\end{eqnarray*}
Obviously, all the $1$-forms $r^2\sigma_i~(i=1,2,3)$ satisfy (\ref{owemigneigaindn}) with $\mu=0$ and $a=1$. Hence, all the Einstein metrics constructed by $\b$-deformations combining with data $\a=\sqrt{g_{\mathbb R^4}}$ and $\b=r^2\sigma_1$ can be determined completely due to Theorem \ref{ainnandnngnadg}.

\begin{theorem}\label{iud7dgdbg}
	Assume $\sigma_1$, $\sigma_2$, $\sigma_3$ be the Cartan-Maurer forms on $\mathbb S^3$. Then any Einstein metric which can be expressed locally as the form
	\begin{eqnarray*}
		\psi(r^2)\ud r^2+\phi(r^2)\sigma_1^2+\varphi(r^2)(\sigma_2^2+\sigma_3^2)
	\end{eqnarray*}
	must be, after normalization, one of
	\begin{eqnarray}\label{aodmaindbgbbag}
		\ \ \ \bar\a^2=\left(1+Fr^{-4}-\frac{\bar\mu}{2}r^2\right)^{-1}\ud r^2+\left(1+Fr^{-4}-\frac{\bar\mu}{2}r^2\right)r^2\sigma_1^2+r^2(\sigma_2^2+\sigma_{3}^2)
	\end{eqnarray}
	or
	\begin{eqnarray}\label{aodmaindbgbbagb}
		\bar\a^2=\Omega\ud r^2+(1+Er^2)^{-1}\Omega^{-1}r^2\sigma_1^2+r^2(\sigma_2^2+\sigma_{3}^2),
	\end{eqnarray}
	where 
	\begin{eqnarray*}
		\Omega:=\frac{r^4}{(1+Er^2)\left\{F\sqrt{1+Er^2}-E^{-3}\left[\bar\mu(1+Er^2)^2-2(2E+3\bar\mu)(1+Er^2)-(4E+3\bar\mu)\right]\right\}}.
	\end{eqnarray*}
\end{theorem}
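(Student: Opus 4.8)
The plan is to recognize the given warped product as a $\b$-deformation of the flat metric $g_{\mathbb R^4}=\ud r^2+r^2(\sigma_1^2+\sigma_2^2+\sigma_3^2)$ carrying the Killing form $\b=r^2\sigma_1$, feed it into the classification of Theorem \ref{ainnandnngnadg}, and finally read off the two families by specializing the auxiliary function $f$ of Lemma \ref{doadvvvvadgmanbg} to $n=4$.

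First I would reduce an arbitrary warped Einstein metric to a $\b$-deformation. Given $\bar g=\psi(r^2)\ud r^2+\phi(r^2)\sigma_1^2+\varphi(r^2)(\sigma_2^2+\sigma_3^2)$ with positive coefficients, introduce a new radial coordinate $s$ by $\ud(\ln s)=\pm\sqrt{\psi/\varphi}\,\ud r$. In the $s$-coordinate the metric $\a:=\sqrt{\ud s^2+s^2(\sigma_1^2+\sigma_2^2+\sigma_3^2)}$ is flat and $\b:=s^2\sigma_1$ is a Killing form satisfying (\ref{owemigneigaindn}) with $\mu=0$ and $a=1$ (as already noted for the forms $r^2\sigma_i$, which also supplies (\ref{tuabgengand}) since the flat metric has constant sectional curvature). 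Matching the three blocks of $\bar g$ against those of the $\b$-deformation $e^{2\rho}\ud s^2+e^{2\rho}s^2\Delta\,\sigma_1^2+e^{2\rho}s^2(\sigma_2^2+\sigma_3^2)$ forces exactly the displayed ODE for $s$ (from the $\ud r^2$ and $\sigma_2^2+\sigma_3^2$ blocks) and identifies $e^{2\rho(s^2)}=\varphi/s^2$, $\Delta(s^2)=\phi/\varphi$, with $b^2=s^2$. The exceptional case $1+2b^2\rho'=0$ yields $\Delta=0$ and is excluded, since then $\ba$ fails to be Riemannian.

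Next, since $\bar g$ is Einstein, with Ricci constant $\bar\mu$ say, the factors $\rho,\Delta$ must solve the ODEs $X=T=0$ of Lemma \ref{donienannggabdg}; equivalently, all hypotheses of Theorem \ref{ainnandnngnadg} are met with $\mu=0$ and $a=1$. That theorem then asserts that, after renormalization, $\bar g$ takes the form (\ref{soaidnagnnaongnga}) with $f$ given by Lemma \ref{doadvvvvadgmanbg}. It remains only to specialize and substitute; note in passing that the constant-curvature subcase $\Delta\equiv1$ is not lost but reappears as a special parameter value (it corresponds to $E\neq0$, $F=0$, $\bar\mu=-E$, for which the bracket below collapses to $-E^3r^4$ and $\Omega=(1-\bar\mu r^2)^{-1}$).

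It remains to evaluate $f$ at $n=2m=4$, i.e. $m=2$, with $\mu=0$ and $a=1$, and to substitute into (\ref{soaidnagnnaongnga}). When $E=0$ one gets $f^2(\varrho)=(F+\varrho^2-\tfrac12\bar\mu\varrho^3)^{-1}$; putting $\varrho=r^2$, the $\ud r^2$-coefficient $r^4f^2$ collapses to $(1+Fr^{-4}-\tfrac{\bar\mu}{2}r^2)^{-1}$ and the $\sigma_1^2$-coefficient $\tfrac{1}{r^2f^2}$ to $(1+Fr^{-4}-\tfrac{\bar\mu}{2}r^2)r^2$, which is exactly (\ref{aodmaindbgbbag}). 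When $E\neq0$ the binomial sum in $f$ has only the two terms $k=0,1$, and it reorganizes into $f^2=\{\iota^2[F\iota-E^{-3}(\bar\mu\iota^4-2(2E+3\bar\mu)\iota^2-(4E+3\bar\mu))]\}^{-1}$ with $\iota=\sqrt{1+Er^2}$; substituting $\varrho=r^2$ turns the $\ud r^2$-coefficient into the stated $\Omega$ and the $\sigma_1^2$-coefficient into $(1+Er^2)^{-1}\Omega^{-1}r^2$, which is (\ref{aodmaindbgbbagb}). The genuinely laborious point is this last evaluation in the case $E\neq0$, where one must verify that the summed binomial terms compress into the compact polynomial appearing in $\Omega$; the conceptual crux, by contrast, is the first reduction step, which guarantees that no Einstein metric of the warped form escapes the $\b$-deformation classification.
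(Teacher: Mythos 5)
Your proposal is correct and takes essentially the same route as the paper: recognize the warped metric as a $\b$-deformation of the flat metric $g_{\mathbb R^4}$ with $\b=r^2\sigma_1$, invoke the classification of Theorem \ref{ainnandnngnadg}, and evaluate $f$ from Lemma \ref{doadvvvvadgmanbg} at $m=2$ separately for $E=0$ and $E\neq0$, which does yield exactly (\ref{aodmaindbgbbag}) and (\ref{aodmaindbgbbagb}). Your explicit radial coordinate change $\ud(\ln s)=\pm\sqrt{\psi/\varphi}\,\ud r$ and the observation that the constant-curvature case reappears at $F=0$, $E=-\bar\mu$ simply make explicit what the paper asserts (in the introduction and in the remark following Example \ref{dnianndnignangnagng}), so the two arguments coincide in substance.
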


(\ref{aodmaindbgbbag}) are Eguchi-Hanson's metrics (3.24) in \cite{EH2}. It is known that the Ricci-flat Eguchi-Hanson's metric($\bar\mu=0$) is
K\"{a}hler on the cotangent bundle of $\mathbb S^2$\cite{Dancer}.

Moreover, we have
\begin{itemize}
	\item When $F=0$ and $\bar\mu=4$, (\ref{aodmaindbgbbag}) is the Fubini-Study metric on $\mathbb{CP}^2$, which can be expressed as\cite{Hitchin}
	\begin{eqnarray*}
		g_{\mathrm{FS}}=\f{1}{2}\left\{\f{1}{(1+t^2)^2}(\ud t^2+t^2\sigma_1^2)+\f{t^2}{1+t^2}(\sigma_2^2+\sigma_3^2)\right\}
	\end{eqnarray*}
	by the reparameterization $r=\frac{1}{\sqrt{2}}\cdot\frac{t}{\sqrt{1+t^2}}$.
	\item When $F=0$ and $\bar\mu=-4$, (\ref{aodmaindbgbbag}) is the Bergmann metric on $\mathbb{B}^4$, which can be expressed as\cite{Hitchin}
	\begin{eqnarray*}
		g_{\mathrm{B}}=\f{1}{2}\left\{\f{\ud t^2}{(1-t^2)^2}+\f{t^2}{(1-t^2)^2}\sigma_1^3+\f{t^2}{1-t^2}(\sigma_2^2+\sigma_3^2)\right\}
	\end{eqnarray*}
	by the reparameterization $r=\frac{1}{\sqrt{2}}\cdot\frac{t}{\sqrt{1-t^2}}$.
	\item When $E=\frac{4}{m^2}$, $F=-\frac{m^4}{2}$ and $\bar\mu=0$, (\ref{aodmaindbgbbagb}) is the Hawking Taub-NUT metric on $\mathbb{R}^4$\cite{Hawking,LeBrun}. See Theorem \ref{aondbabdgmmadbgg} for related discussion. By the reparameterization $r=t\sqrt{t^2+m}$, it will turn to be a more familiar face
	\begin{eqnarray*}
		g_{\mathrm{TN}}=(t^2+m)\ud t^2+\f{m^2t^2}{t^2+m}\sigma_1^2+t^2(t^2+m)(\sigma_2^2+\sigma_3^2).
	\end{eqnarray*}
	\item When $E=4(m+1)$ and $F=\frac{m}{2(m+1)^3}$ and $\bar\mu=-4$, (\ref{aodmaindbgbbagb}) belong to Pedersen's metrics\cite{Pedersen2,Ymatsumoto}, the original expression of which is given by
	\begin{eqnarray*}
		g_{\mathrm{P}}=\f{1}{(1-t^2)^2}\left\{\f{1+mt^2}{1+mt^4}\ud t^2+\f{t^2(1+mt^4)}{1+mt^2}\sigma_1^2+t^2(1+mt^2)(\sigma_2^2+\sigma_3^2)\right\}.
	\end{eqnarray*}
	It is known that such a metric is complete on $\mathbb{B}^4$ when $m\geq-1$. One can obtain our expression by the reparameterization $t=\sqrt{\frac{2 r^2}{1-mr^4}}$.
\end{itemize}

\begin{remark}
	$n=8$ is also a special dimension. We have a global frame on $\mathbb{S}^7$
	\begin{eqnarray*}
		&\sigma_1:=\f{1}{r^2}(-x^2\,\ud x^1+x^1\,\ud x^2-x^4\,\ud x^3+x^3\,\ud x^4-x^6\,\ud x^5+x^5\,\ud x^6-x^8\,\ud x^7+x^7\,\ud x^8),&\\
		&\sigma_2:=\f{1}{r^2}(-x^3\,\ud x^1+x^4\,\ud x^2+x^1\,\ud x^3-x^2\,\ud x^4-x^7\,\ud x^5-x^8\,\ud x^6+x^5\,\ud x^7+x^6\,\ud x^8),&\\
		&\sigma_3:=\f{1}{r^2}(-x^4\,\ud x^1-x^3\,\ud x^2+x^2\,\ud x^3+x^1\,\ud x^4-x^8\,\ud x^5+x^7\,\ud x^6-x^6\,\ud x^7+x^5\,\ud x^8),&\\
		&\sigma_4:=\f{1}{r^2}(-x^5\,\ud x^1+x^6\,\ud x^2+x^7\,\ud x^3+x^8\,\ud x^4+x^1\,\ud x^5-x^2\,\ud x^6-x^3\,\ud x^7-x^4\,\ud x^8),&\\
		&\sigma_5:=\f{1}{r^2}(-x^6\,\ud x^1-x^5\,\ud x^2+x^8\,\ud x^3-x^7\,\ud x^4+x^2\,\ud x^5+x^1\,\ud x^6+x^4\,\ud x^7-x^3\,\ud x^8),&\\
		&\sigma_6:=\f{1}{r^2}(-x^7\,\ud x^1-x^8\,\ud x^2-x^5\,\ud x^3+x^6\,\ud x^4+x^3\,\ud x^5-x^4\,\ud x^6+x^1\,\ud x^7+x^2\,\ud x^8),&\\
		&\sigma_7:=\f{1}{r^2}(-x^8\,\ud x^1+x^7\,\ud x^2-x^6\,\ud x^3-x^5\,\ud x^4+x^4\,\ud x^5+x^3\,\ud x^6-x^2\,\ud x^7+x^1\,\ud x^8),&
	\end{eqnarray*}
	which is similar to the Cartan-Maurer forms on $\mathbb S^3$, and the $8$-dimensional standard Euclidian metric is
	\begin{eqnarray*}
		g_{\mathbb R^8}=\ud r^2+r^2(\sigma_1^2+\sigma_2^2+\sigma_3^2+\sigma_4^2+\sigma_5^2+\sigma_6^2+\sigma_7^2).
	\end{eqnarray*}
	Obviously, all the $1$-forms $r^2\sigma_i~(i=1,\cdots,7)$ satisfy (\ref{owemigneigaindn}) with $\mu=0$ and $a=1$. So we can obtain many Einstein metrics by Theorem \ref{ainnandnngnadg}. But for the other dimensions, it seems that there is not such a global frame on the sphere in general.
\end{remark}

\section{Some other typical examples}\label{adiannnndnnggbabbggg}
If a Riemannian metric can be expressed as $\psi(r^2)\ud r^2+\phi(r^2)\sigma_1^2+\varphi(r^2)(\sigma_2^2+\cdots+\sigma_{n-1}^2)$, then there are infinity many ways to express it as a similar form by taking any suitable variable substitution $r=r(\bar r)$. However, it may not necessarily exist a particular coordinate $\bar r$ such that all the functions $\psi$, $\phi$ and $\varphi$ are explicit.

In this sense, it is lucky, due to Theorem \ref{ainnandnngnadg}, that all the Einstein metrics under our assumptions can always be expressed as a warped product form with the corresponding coefficients being elementary functions.

In this section, we would like to express these metrics in a more concise form, namely the form (\ref{Drho}). Unfortunately, only a few part of deformation factors $\Delta$ and $\rho$ can be expressed explicitly. A summarization is listed below.

\begin{table}[htbp]
	\centering
	\label{table1}
	\begin{tabular}{|c|c|c|c|}
		\hline
		\scriptsize{Parameters}&\scriptsize{Dimension}&\scriptsize{Completeness}&\scriptsize{Corresponding metrics}\\
		\hline
		\scriptsize{$E=0$, $F=0$, $\bar\mu=0$}&\scriptsize{$n=n$}&\scriptsize{whole}&\scriptsize{Euclidean metrics}\\ 
		\hline
		\scriptsize{$E=0$, $F\neq0$, $\bar\mu=0$}&\scriptsize{$n=n$}&\scriptsize{whole}&\scriptsize{Example \ref{oanidnnanndgbbg}: general Eguchi-Hanson metric (\ref{generalEguchi-Hanson})}\\
		\hline
		\scriptsize{$E=0$, $F=0$, $\bar\mu\neq0$}&\scriptsize{$n=n$}&\scriptsize{whole}&\scriptsize{Example \ref{doaieningangga}: Fubini-Study or Bergmann metrics (\ref{danidnnabbgbbgag})}\\
		\hline
		\scriptsize{$E=0$, $F\neq0$, $\bar\mu\neq0$}&\scriptsize{$n=4$}&\scriptsize{whole}&\scriptsize{Example \ref{dubaudvajdvgjkhbag}: non Ricci-flat Eguchi-Hanson's metrics}\\
		\hline
		\scriptsize{$E\neq0$, $\bar\mu=0$}&\scriptsize{$n=4$}&\scriptsize{whole}&\scriptsize{Example \ref{dnianndnignangnagng}: 
			Generalized Hawking Taub-NUT metrics}\\
		\hline
		\scriptsize{$E\neq0$, $F=0$, $\bar\mu\neq0$}&\scriptsize{$n=n$}&\scriptsize{
			partial}&\scriptsize{Sphere or Hyperbolic metrics}\\
		\hline
		\scriptsize{$E\neq0$, $\bar\mu\neq0$}&\scriptsize{$n=4$}&\scriptsize{whole}&\scriptsize{Example \ref{dubavffaccddhgga}: include Pedersen metrics}\\
		\hline
	\end{tabular}
	\caption{Some Einstein metrics expressed explicitly in the form (\ref{Drho})}
\end{table}

Obviously that $E=0$ is simpler than $E\neq0$. When $E=F=0$ and $\bar\mu=0$, then $\ba$ is an Euclidean metric according to Theorem \ref{ainnandnngnadg}. Beyond this, three other cases are discussed below.

\begin{example}[$E=0$, $F\neq0$, $\bar\mu=0$]\label{oanidnnanndgbbg}
	Take $\mu=0$, $\bar\mu=0$ and $E=0$. By substitution $\rho=-\ln(b^2)+\frac{2}{n}\ln g(b^2)$, Eq. (\ref{yetangebg}) turns to be a linear equation about $g$,
	\begin{eqnarray*}
		2b^2g''-(n-2)g'=0.
	\end{eqnarray*}
	So
	\begin{eqnarray}\label{dniaggdbngnabvggg}
		\Delta(b^2)=\left(\f{C-Db^{n}}{C+Db^{n}}\right)^2,\quad\rho(b^2)=-\ln b^2+\frac{2}{n}\ln(C+Db^{n}),
	\end{eqnarray}
	in which $C$ and $D$ are constants. By (\ref{Drho}), the following metrics
	\begin{eqnarray}
		\ba=\f{\sqrt{(C+Db^n)^2(b^2\a^2-\b^2)+(C-Db^n)^2\b^2}}{b^3(C+Db^n)^{1-\frac{2}{n}}}\label{dnaggdbbgklfbabng}
	\end{eqnarray}
	are Ricci-flat when $\a$ is Ricci flat and $\b$ satisfies (\ref{owemigneigaindna})-(\ref{tuabgengand}).
	
	In particular, by taking $C=1$, $D=\mu$ and using data (\ref{inbadbgamgd}), we obtain the following Ricci-flat metrics
	\begin{eqnarray}\label{generalEguchi-Hanson}
		\ba=\f{\sqrt{(1+\mu|x|^n)^2|x|^2|y|^2-4\mu|x|^n\langle Jx,y\rangle^2}}{|x|^3(1+\mu|x|^n)^{1-\frac{2}{n}}}.
	\end{eqnarray}
	
	The congruent relationship between (\ref{dniaggdbngnabvggg}) and (\ref{iadnnanadgnada}) is determined by
	\begin{eqnarray*}
		f(\varrho)=\pm\frac{1}{\sqrt{\varrho^2-4CD\varrho^{2-m}}},
	\end{eqnarray*}
	namely $F=-4CD$. As a result, (\ref{generalEguchi-Hanson}) can be expressed locally as
	\begin{eqnarray}\label{E0bmu0a}
		\bar\a^2=\left(1+Ft^{-n}\right)^{-1}\ud t^2+\left(1+Ft^{-n}\right){t^2}\sigma_1^2+t^2(\sigma_2^2+\cdots+\sigma_{n-1}^2).
	\end{eqnarray}
	
	It is known that (\ref{E0bmu0a}) is the Ricci-flat Eguchi-Hanson metric when $n=4$\cite{EH1,EH2}~(See also \cite{Brendle} and \cite{Hitchin} for related discussions). Hence, (\ref{generalEguchi-Hanson}) can be regarded as the general Eguchi-Hanson metrics on higher dimensional spaces. In 2022, C. Corral et al. constructed these metrics independently from a physical point of view\cite{Corral}.
\end{example}
\begin{remark}
	According to Lemma \ref{bbchange}, the metrics (\ref{dnaggdbbgklfbabng}) can also be obtained by using a non-Ricci flat Einstein metric and the corresponding $1$-form combining with the following deformation factors
	\begin{eqnarray*}
		\Delta(b^2)&=&\left\{\f{C|a+\sqrt{a^2-\mu b^2}|^{-\frac{n}{2}}-D|a-\sqrt{a^2-\mu b^2}|^{-\frac{n}{2}}}{C|a+\sqrt{a^2-\mu b^2}|^{-\frac{n}{2}}+D|a-\sqrt{a^2-\mu b^2}|^{-\frac{n}{2}}}\right\}^2,\\
		\rho(b^2)&=&\frac{2}{n}\ln\left(C|a+\sqrt{a^2-\mu b^2}|^{-\frac{n}{2}}+D|a-\sqrt{a^2-\mu b^2}|^{-\frac{n}{2}}\right),
	\end{eqnarray*}
	in which $C$ and $D$ are constants (not the same ones in (\ref{dniaggdbngnabvggg})).
\end{remark}

When $E=0$, $\mu=0$ and $\bar\mu\neq0$, Eq. (\ref{yetangebg}) will turn to to be
\begin{eqnarray*}
	2b^2g''+(n+2)g'+\frac{n(n-1)\bar\mu}{8a^2}g^{1+\frac{4}{n}}=0
\end{eqnarray*}
by substitution $\rho=\frac{2}{n}\ln g(b^2)$, or to be
\begin{eqnarray}\label{donandnanngga}
	2b^2hh''-(n+2)b^2(h')^2+(n+2)hh'=(n-1)\f{\bar\mu}{a^2}
\end{eqnarray}
by substitution $\rho=-\ln h(b^2)$.

Both of them are hard to be solved completely (some complicated solutions can be found in Example \ref{dubaudvajdvgjkhbag}). However, notice that the RHS of (\ref{donandnanngga}) is a constant, such equation can be easily turn to be a three-order ODE without $\bar\mu$ and $a^2$ as below
\begin{eqnarray}
	\f{(2b^2h'-h)''}{2b^2h'-h}=\frac{n+1}{2}\cdot\f{h''}{h}.\label{unaidnangbga}
\end{eqnarray}
Here we assume $2b^2h'-h$ is non-zero, since if not so, the solutions are trivial in our problem.

\begin{example}[$E=0$, $F=0$, $\bar\mu\neq0$]\label{doaieningangga}
	Obviously, $h''=0$ is the simplest case for (\ref{unaidnangbga}), by which we can obtain a series of explicit solutions of (\ref{yetangebg}) as below
	\begin{eqnarray*}
		\Delta(b^2)=\left(\f{C-Db^2}{C+Db^2}\right)^2,\quad\rho(b^2)=-\ln(C+Db^2),
	\end{eqnarray*}
	in which $C$ aan $D$ are constants. In this case, Eq. (\ref{donandnanngga}) indicates $\bar\mu=\f{4(n+2)}{n-1}CDa^2$.
	
	Hence, if $\a$ is Ricci flat, $\b$ satisfies (\ref{owemigneigaindna})-(\ref{tuabgengand}), then the following metrics
	\begin{eqnarray}\label{gabbdaffavccdcg}
		\ba=\f{\sqrt{(C+Db^2)^2(b^2\a^2-\b^2)+(C-Db^2)^2\b^2}}{b(C+Db^2)^2}.
	\end{eqnarray}
	have Ricci constant $\f{4(n+2)}{n-1}CDa^2$.
	
	In particular, by taking $C=\frac{1}{2}$, $D=\frac{\mu}{2}$ and using data (\ref{inbadbgamgd}), we obtain the following metrics
	\begin{eqnarray}\label{danidnnabbgbbgag}
		\ba=\f{2\sqrt{(1+\mu|x|^2)^2|y|^2-4\mu\langle Jx,y\rangle^2}}{(1+\mu|x|^2)^2},
	\end{eqnarray}
	which have Ricci constant $\bar\mu=\frac{n+2}{n-1}\mu$.
	
	For the special solutions (\ref{gabbdaffavccdcg}), the function $f(\varrho)$ in Lemma \ref{doadvvvvadgmanbg} is given by
	\begin{eqnarray*}
		f(\varrho)=\pm\frac{1}{\sqrt{\varrho^2-4CD\varrho^3}},
	\end{eqnarray*}
	which means $E=F=0$. Hence, it can be expressed locally as
	\begin{eqnarray}\label{doanidnanngnangsa}
		\bar\a^2=\left(1-\mu t^2\right)^{-1}\ud t^2+\left(1-\mu t^2\right)t^2\sigma_1^2+t^2(\sigma_2^2+\cdots+\sigma_{n-1}^2)
	\end{eqnarray}
	according to Theorem \ref{ainnandnngnadg}, by which we can see that (\ref{danidnnabbgbbgag}) is actually the Fubini-Study metric when $\mu>0$ and the Bergmann metric when $\mu<0$.
	
	Since the resulting metric is not Ricci-flat, sometimes it will be more convenient to begin with a non Ricci-flat Einstein metric. For instant, if you want to construct some Einstein metrics on a sphere, then considering a round metric instead of a flat metric is more natural.
	
	Due to Lemma \ref{bbchange} and Theorem \ref{tebhagnagad}, we can obtain the metrics (\ref{danidnnabbgbbgag}) beginning with a Riemannian metric with constant sectional curvature $\mu$ and its Killing $1$-form $\b$ satisfying (\ref{owemigneigaindn}) by $\b$-deformations determined by
	\begin{eqnarray*}
		\Delta(b^2)=\f{(C\sqrt{a^2-\mu b^2}+Da^2)^2}{a^2(C+D\sqrt{a^2-\mu b^2})^2},\quad\rho(b^2)=-\ln(C+D\sqrt{a^2-\mu b^2}),
	\end{eqnarray*}
	in which $C$ and $D$ are constants (not the same ones in (\ref{gabbdaffavccdcg})). In this case, $\bar\mu=\frac{n+2}{n-1}(C^2-D^2a^2)\mu$, and the resulting Einstein metrics read
	\begin{eqnarray}\label{donaidnndngadggg}
		\ \ \ \ba=\f{\sqrt{a^2(C+D\sqrt{a^2-\mu b^2})^2(b^2\a^2-\b^2)+(C\sqrt{a^2-\mu b^2}+Da^2)^2\b^2}}{|a|b(C+D\sqrt{a^2-\mu b^2})^2}.
	\end{eqnarray}
	
	It seems from the expression (\ref{gabbdaffavccdcg}) or (\ref{donaidnndngadggg}) that we obtain a two-parameters Einstein metrics. However, for a given constant $\bar\mu$ and a given dimension $n$, there is only one Einstein metric up to an isometry locally. This can be easily seen by the expression (\ref{doanidnanngnangsa}). Hence, the above particular solutions, namely the Fubini-Study metrics and Bergmann metrics, can be expressed in a simpler form as
	\begin{eqnarray}\label{FSB}
		\ba=\sqrt{\a^2-\mu\b^2}
	\end{eqnarray}
	by choosing $C=1$, $D=0$ and $a=1$ in (\ref{donaidnndngadggg}). One can obtain the expression (\ref{danidnnabbgbbgag}) again by using above formulae combining with data (\ref{inbadbgamgddanda}), and obtain a more common form by using data (\ref{inbadbgamgddandab}), namely
	\begin{eqnarray*}
		\ba=\f{\sqrt{(1+\mu|x|^2)|y|^2-\mu\langle x,y\rangle^2-\mu\langle Jx,y\rangle^2}}{1+\mu|x|^2}
		=\f{\sqrt{(1+\mu z\cdot\bar z)w\cdot\bar w-\mu (z\cdot\bar w)(\bar z\cdot w)}}{1+\mu z\cdot\bar z},
	\end{eqnarray*}
	where $z:=(z_1,z_2,\cdot,z_m)$, $w:=(w_1,w_2,\cdots,w_m)$, $z_k:=x_{2k-1}+ix_{2k}$, $w_k:=y_{2k-1}+iy_{2k}$ for $1\leq k\leq m$.
\end{example}
\begin{remark}
	Be attention that the resulting metric $\ba$ may not define on the initial manifold after $\b$-deformation. For example, consider a standard spherical metric $\a$ on $\mathbb{S}^{2n}$ and its $1$-form $\b$ satisfying (\ref{owemigneigaindn}). According to the above discussion, (\ref{FSB}) is the Fubini-Study metric defined on $\mathbb{CP}^{n}$ instead of $\mathbb{S}^{2n}$. 
	
	What happens under $\b$-deformation?
	
	In fact, $\b$-deformation maybe lead to some singularity. Due to (\ref{owemigneigaindn}),
	\begin{eqnarray*}
		t=-b^2(a^2-\mu b^2).
	\end{eqnarray*}
	Notice that $t=-s_ia^{ij}s_j$ can be regard as the opposite number of the inner product for the vector $s_i$, we can conclude that $a^2-\mu b^2$ must be non-negative. When $a^2-\mu b^2=0$, (\ref{owemigneigaindn}) implies $s_0=0$ at the corresponding points. For the round sphere, this will happen on the equator.
	
	For such points, the metric may be of  some singularity. Let's talk about a simple example, which is interesting but instructive. Taking a standard spherical metric $\a$ and its $1$-form $\b$ expressed locally as (\ref{inbadbgamgddanda}) with $\mu=1$ and $a=1$, then
	\begin{eqnarray*}
		a^2-\mu b^2=\f{(1-|x|^2)^2}{(1+|x|^2)^2}.
	\end{eqnarray*}
	As a result,
	\begin{eqnarray*}
		\ba=\f{2|y|}{1+|x|^2+\left|1-|x|^2\right|}
	\end{eqnarray*}
	is Ricci-flat by taking $C=D=1$ in (\ref{dinewbbabvavdb}). It is easy to see that on the southern hemisphere (namely the region $|x|<1$), $\ba=|y|$ is the flat metric, and on the northern hemisphere (namely the region $|x|>1$), $\ba=\f{|y|}{|x|^2}$ is the pullback metric of flat metric by inversion transformation with respect to the hypersurface $|x|=1$. On the equator $|x|=1$, the metric $\ba$ is continuous, but not smooth.
	
	We don't know how to deal with the singularity at this moment. It seems that, illuminated by Funibi-Study metrics, such singularity can possibly be repaired by taking some operations for the underlying manifold.
\end{remark}

\begin{example}[$E=0$, $F\neq0$, $\bar\mu\neq0$]\label{dubaudvajdvgjkhbag}
	For this case, we can only make clear the solutions for the simplest dimension. When $n=4$ and $a=1$, the function $f(\varrho)$ in Lemma \ref{doadvvvvadgmanbg} is given by
	\begin{eqnarray*}
		f(\varrho)=\pm\frac{1}{\sqrt{F+\varrho^2-\frac{\bar\mu}{2}\varrho^3}}.
	\end{eqnarray*}
	
	Denote the roots of $F+\varrho^2-\frac{\bar\mu}{2}\varrho^3=0$ by $\varrho_1$, $\varrho_2$, and $\varrho_3$. Due to Vieta theorem, the roots should satisfy a constraint conditions
	\begin{eqnarray*}
		\varrho_1\varrho_2+\varrho_2\varrho_3+\varrho_3\varrho_1=0.
	\end{eqnarray*}
	Moreover, $F\neq0$ implies $\varrho_1\varrho_2\varrho_3\neq0$.
	
	As a result, we can calculate the integral $\int f(\varrho)\,\ud\varrho$ and obtain deformation factors due to (\ref{iadnnanadgnada})-(\ref{iadnnanadgnadb}). When $\mu=0$, the solutions are described by Jacobi elliptic sine and cosine functions under the help of Maple program as below
	\begin{eqnarray*}
		\textstyle\Delta(b^2)&=&\f{\bar\mu(\varrho_2-\varrho_3)^2\mathrm{sn}^2(\xi\ln b^2,\eta)\mathrm{cn}^2(\xi\ln b^2,\eta)\left\{(\varrho_2-\varrho_3)\mathrm{sn}^2(\xi\ln b^2,\eta)
			+(\varrho_3-\varrho_1)\right\}}{2\left\{(\varrho_2-\varrho_3)\mathrm{sn}^2(\xi\ln b^2,\eta)
			+\varrho_3\right\}^2},\\
		\rho(b^2)&=&\f{1}{2}\ln\left\{(\varrho_2-\varrho_3)\mathrm{sn}^2(\xi\ln b^2,\eta)
		+\varrho_3\right\}-\f{1}{2}\ln b^2,
	\end{eqnarray*}
	in which
	\begin{eqnarray*}
		\xi=\sqrt{\frac{\bar\mu(\varrho_3-\varrho_1)}{8}},\quad\eta=\sqrt{\frac{\varrho_3-\varrho_2}{\varrho_3-\varrho_1}}.
	\end{eqnarray*}
	In this case, the corresponding metric has Ricci constant $\bar\mu=\frac{2}{\varrho_1+\varrho_2+\varrho_3}$.
	
	The above expressions asks $\varrho_1\neq\varrho_3$. Notice that triple root is not allowed, or else $\varrho_1=\varrho_2=\varrho_3=0$, which is in contradiction with the assumption that $F\neq0$. Hence, if $\varrho_1=\varrho_3$, in order to obtain the solution, we can exchange the value of $\varrho_2$ and $\varrho_3$. For this particular case, the corresponding metric, up to a re-scaling, is determined by
	\begin{eqnarray*}
		\Delta(b^2)&\f{\tan^2(\frac{1}{2}\ln b^2)\left[\tan^2(\frac{1}{2}\ln b^2)+1\right]^2}{\left[\tan^2(\frac{1}{2}\ln b^2)+\frac{1}{3}\right]^2},\quad
		\rho(b^2)=\frac{1}{2}\ln\left[\tan^2(\frac{1}{2}\ln b^2)+\frac{1}{3}\right]-\frac{1}{2}\ln b^2,
	\end{eqnarray*}
	and has Ricci constant $\bar\mu=-2$.
	
	According to the discussion in Section \ref{sinainhhdngnnag}, the above metrics are actual the non Ricci-flat Eguchi-Hanson's metrics.
\end{example}

When $E\neq0$, it is hard to determined the solutions for general dimensions. But we can provide all the solutions when $n=4$.

\begin{example}[$E\neq0$, $\bar\mu=0$]\label{dnianndnignangnagng}
	Under the help of Maple program, we obtain the following solutions for $4$-dimensional case,
	\begin{eqnarray*}
		\Delta(b^2)&=&\f{64DEb^4(b^4-C^2+4DE)^2}{\left\{(b^2+C)^4-8DE(3b^4+2Cb^2+C^2-2DE)\right\}^2},\\
		\rho(b^2)&=&\f{1}{2}\ln\f{(b^2+C)^4-8DE(3b^4+2Cb^2+C^2-2DE)}{16DE^2b^6},
	\end{eqnarray*}
	in which $C$ and $D$ are constants. $DE$ must be positive to make sure the positivity of $\Delta$.
	
	For these solutions, the function $f(\varrho)$ in Lemma \ref{doadvvvvadgmanbg} is given by
	\begin{eqnarray*}
		f(\varrho)=\pm\frac{1}{\sqrt{-\frac{4C}{E^2\sqrt{DE}}\iota^3+4E^{-2}(\iota^2+1)}}.
	\end{eqnarray*}
	So they provide all the solutions for this case when $n=4$. In particular, when $E=\frac{C^2}{4D}$, we have $\rho(b^2)=\frac{1}{2}\ln\frac{D(b^2+4C)}{C^4}$ and $\Delta(b^2)=\frac{16C^2}{(b^2+4C)^2}$, the corresponding metric is Hawking Taub-NUT metric(c.f. Theorem \ref{aondbabdgmmadbgg}). So such metrics can be regarded as the generalization of Hawking Taub-NUT metric in $4$-dimension.
\end{example}

The case that both $E$ and $\bar\mu$ are not zero is more complicated. It is easy to verify that, for the solutions (\ref{dinewbbabvavda}), the corresponding function $f$ is given by
\begin{eqnarray*} f(\varrho)=\pm\frac{1}{\varrho\sqrt{1-4CD\varrho}}=\pm\frac{\varrho^{\frac{m-2}{2}}}{\sqrt{(1-4CD\varrho)\varrho^m}}.
\end{eqnarray*}
Hence, $E=-4CD=-\frac{\bar\mu}{a^2}$ and $F=0$. By Theorem \ref{tebhagnagad}, the resulting metrics have constant sectional curvature. Due to Remark \ref{dinnabbbbgmamgbg} in Section \ref{dimiunuang}, one can see that $E=-\frac{\bar\mu}{a^2}$ is a special value. When $E\neq0$, these are the totally explicit solutions for general dimensions that we can obtain at present.

Finally, we will still focus on the $4$-dimensional case. In this case, the deformation factors can be expressed also by using Jacobi elliptic functions.

\begin{example}[$E\neq0$, $\bar\mu\neq0$]\label{dubavffaccddhgga}
	Assume $n=4$, $a=1$, then the function $f(\varrho)$ in Lemma \ref{doadvvvvadgmanbg} is given by
	\begin{eqnarray*}
		f(\varrho)=\pm\f{1}{\iota\sqrt{F\iota-E^{-3}\left\{\bar\mu\iota^4
				-2(2E+3\bar\mu)\iota^2-(4E+3\bar\mu)\right\}}},
	\end{eqnarray*}
	in which $\iota:=\sqrt{1+E\varrho}$. In this case, $\int f(\varrho)\,\ud\varrho$ is an incomplete elliptic integral of the first kind, and $\rho$ determined by (\ref{iadnnanadgnada})-(\ref{iadnnanadgnadb}) is a Jacobi elliptic function.
	
	Denote $k:=-4E/\bar\mu\neq0$ and $l:=-E^3F/\bar\mu$, then
	\begin{eqnarray*}
		\int f(\varrho)\,\ud\varrho=\pm\int\f{\ud\iota}{\sqrt{\frac{1}{k}\left\{\iota^4+(k-6)\iota^2+l\iota+k-3\right\}}}.
	\end{eqnarray*}
	
	Assume a real factorization as below
	\begin{eqnarray}
		\iota^4+(k-6)\iota^2+l\iota+k-3=(\iota^2+p\iota+q)(\iota^2-p\iota+r),\label{dobfbbhbagdgvg}
	\end{eqnarray}
	by which we have
	\begin{eqnarray*}
		q+r-p^2=k-6,\quad-p(q-r)=l,\quad qr=k-3.
	\end{eqnarray*}
	These equalities imply the real numbers $p$, $q$ and $r$ should satisfy the following constraint condition
	\begin{eqnarray}\label{dfaggdhhbalgkdkg}
		p^2+(q-1)(r-1)=4.
	\end{eqnarray}
	
	Denote the roots of $\iota^4+(k-6)\iota^2+l\iota+k-3=0$ by $\iota_1$, $\iota_2$, $\iota_3$ and $\iota_4$. Due to Vieta theorem, the roots should satisfy two constraint conditions
	\begin{eqnarray}
		&\iota_1+\iota_2+\iota_3+\iota_4=0,&\label{dinahhndbbvagvvga}\\
		&\iota_1\iota_2\iota_3\iota_4-\iota_1\iota_2-\iota_1\iota_3-\iota_1\iota_4
		-\iota_2\iota_3-\iota_2\iota_4-\iota_3\iota_4-3=0.&\label{dinahhndbbvagvvgb}
	\end{eqnarray}
	Moreover, $E\neq0$, namely $k\neq0$, implies $\iota_1\iota_2\iota_3\iota_4+3\neq0$.
	
	Combining with (\ref{dinahhndbbvagvvga}) and (\ref{dinahhndbbvagvvgb}), it is easy to see that triple root or quadruple root is not allowed. That is to say, there are only three possible cases for the roots: (i) exist two different double roots, (ii) exits only one double root and (iii) all the four roots are different from each other.
	
	\noindent{\em Case (i) exist two different double roots}
	
	By (\ref{dobfbbhbagdgvg}), we have
	\begin{eqnarray*}
		p^2-4(q-1)=4,\qquad p^2-4(r-1)=4
	\end{eqnarray*}
	or
	\begin{eqnarray*}
		p=0,\qquad q=r.
	\end{eqnarray*}
	
	Combining with (\ref{dfaggdhhbalgkdkg}), the former indicates $p=\pm2$ and $q=r=1$. The corresponding deformation factors are given by (\ref{dinewbbabvavda}). Due to Theorem \ref{tebhagnagad}, the metrics have constant sectional curvature.
	
	The latter indicates $q=r=-1$, which is just the former case, or $q=r=3$. When $q=r=3$, by solving the eqution (\ref{iadnnanadgnada}) with $\mu=0$, the deformation factors after re-scaling are given by
	\begin{eqnarray*}
		\Delta(b^2)=\f{\left[\tan^2(\frac{1}{2}\ln b^2)+1\right]^2}{3\left[\frac{1}{3}-\tan^2(\frac{1}{2}\ln b^2)\right]^2},
		\quad\rho(b^2)=\frac{1}{2}\ln\left[\frac{1}{3}-\tan^2(\frac{1}{2}\ln b^2)\right]-\frac{1}{2}\ln b^2,
	\end{eqnarray*}
	or
	\begin{eqnarray*}
		\Delta(b^2)=\f{\left[\tan^2(\frac{1}{2}\ln b^2)+1\right]^2}{3\left[\tan^2(\frac{1}{2}\ln b^2)-\frac{1}{3}\right]^2},\quad
		\rho(b^2)=\frac{1}{2}\ln\left[\tan^2(\frac{1}{2}\ln b^2)-\frac{1}{3}\right]-\frac{1}{2}\ln b^2,
	\end{eqnarray*}
	and the corresponding metric has Ricci constant $\bar\mu=1$ with $E=-3$ or $\bar\mu=-1$ with $E=3$ respectively.
	
	\noindent{\em Case (ii) exits only one double root}
	
	Without loss of generality, we can assume $\iota^2-p\iota+r=0$ has double root, which indicates $p^2-4r=0$. Combining it with (\ref{dfaggdhhbalgkdkg}) yields $(q+3)(r-1)=0$. Hence $q=-3$ or $r=1$.
	
	When $r=1$, then $p=\pm2$ due to (\ref{dfaggdhhbalgkdkg}), and the roots are given by $1$, $1$, $-1\pm\sqrt{1-q}$ or $-1$, $-1$, $1\pm\sqrt{1-q}$. Be attention that $q\neq1$, or else it can be reduced to Case (i). At the same time, $q\neq-3$, or else $k=0$. But $q>1$ is allowed, which means that exists a couple of conjugate complex roots. By solving the eqution (\ref{iadnnanadgnada})-(\ref{iadnnanadgnadb}), $\rho$ is given by
	\begin{eqnarray*}
		\frac{1}{2}\ln\left\{\f{-8(q+3)[b^4-2(q-1)b^2-4(q-1)]}{E[b^4+8b^2-4(q-1)]^2}\right\},
	\end{eqnarray*}
	\begin{eqnarray*}
		\frac{1}{2}\ln\left\{\f{4[b^4+\frac{q-1}{q+3}b^2-\frac{q-1}{(q+3)^2}]}{E[b^4-\frac{4}{q+3}b^2-\frac{q-1}{(q+3)^2}]^2}\right\}
	\end{eqnarray*}
	or
	\begin{eqnarray*}
		\frac{1}{2}\ln\left\{\frac{4(q+3)(b^4\pm\sqrt{q-1}b^2-1)}{\pm E\sqrt{q-1}(b^4\mp\frac{4}{\sqrt{q-1}}b^2-1)^2}\right\}
	\end{eqnarray*}
	when $q<-3$, $-3<q<1$ or $q>1$ respectively.
	
	These solutions can be uniformly expressed, after re-scaling, as
	\begin{eqnarray*}
		\Delta(b^2)=\f{(b^4-D)^2}{(b^4+D+2Cb^2)^2},\quad\rho(b^2)=\f{1}{2}\ln\left\{\f{C[(b^4+D)+2Cb^2]}{[C(b^4+D)+2 Db^2]^2}\right\},
	\end{eqnarray*}
	in which both $C$ and $D$ are non-zero constants. In this case, $E=8(C^2-D)$, and the corresponding metric have Ricci-constant $\bar\mu=8D$, which are Pedersen metrics in fact. Notice that $E\neq0$ asks $D\neq C^2$. If $D=C^2$, the above solutions are given in Example \ref{doaieningangga}.
	
	When $q=-3$, the roots are given by $\frac{p}{2}$, $\frac{p}{2}$, $-\frac{p}{2}\pm\sqrt{3+\frac{p^2}{4}}$. Be attention that $p^2\neq4$, or else $k=0$. In this case, the deformation factors are given by
	\begin{eqnarray*}
		\Delta(b^2)&=&\f{144(p^2+12)\cos^2\ln b^2}{\left\{\left[\sqrt{p^2+12}\sin(\ln b^2)+p\right]^2-36\right\}^2},\\
		\rho(b^2)&=&\f{1}{2}\ln\f{(p^2-4)\left\{\left[\sqrt{p^2+12}\sin(\ln b^2)+p\right]^2-36\right\}}{4E\left[\sqrt{p^2+12}\sin(\ln b^2)-2p\right]^2}-\f{1}{2}\ln b^2,
	\end{eqnarray*}
	and the corresponding metric has Ricci-constant $\bar\mu=\f{16E}{3(p^2-4)}$.
	
	\noindent{\em Case (iii) all the four roots are different from each other}
	
	Under the help of Maple program, we can obtain the deformation factors by solving the equation (\ref{iadnnanadgnada})-(\ref{iadnnanadgnadb}). When $\mu
	=0$, they are given by
	\begin{eqnarray*}
		\Delta(b^2)&=&\f{\sigma\mathrm{sn}^2(\xi\ln b^2,\eta)\mathrm{cn}^2(\xi\ln b^2,\eta)
			\left\{(\iota_1-\iota_2)(\iota_3-\iota_4)\mathrm{sn}^2(\xi\ln b^2,\eta)-(\iota_1-\iota_3)(\iota_2-\iota_4)\right\}}{\left\{\left[\f{(\iota_1-\iota_2)\iota_3\mathrm{sn}^2(\xi\ln b^2,\eta)-(\iota_1-\iota_3)\iota_2}{(\iota_1-\iota_2)\mathrm{sn}^2(\xi\ln b^2,\eta)-(\iota_1-\iota_3)}\right]^2-1\right\}^2\left\{(\iota_1-\iota_2)\mathrm{sn}^2(\xi\ln b^2,\eta)-(\iota_1-\iota_3)\right\}^4},\\
		\rho(b^2)&=&\f{1}{2}\ln\f{1}{E}\left\{\left[\f{(\iota_1-\iota_2)\iota_3\mathrm{sn}^2(\xi\ln b^2,\eta)-(\iota_1-\iota_3)\iota_2}{(\iota_1-\iota_2)\mathrm{sn}^2(\xi\ln b^2,\eta)-(\iota_1-\iota_3)}\right]^2-1\right\}-\f{1}{2}\ln b^2,
	\end{eqnarray*}
	in which
	\begin{eqnarray*}
		\xi=\sqrt{-\f{(\iota_1-\iota_3)(\iota_2-\iota_4)}{4(\iota_1\iota_2\iota_3\iota_4+3)}},\ \ \eta=\sqrt{\frac{(\iota_1-\iota_2)(\iota_3-\iota_4)}{(\iota_1-\iota_3)(\iota_2-\iota_4)}},\ \ 
		\sigma=\frac{4(\iota_1-\iota_2)^2(\iota_1-\iota_3)^2(\iota_2-\iota_3)^2}{\iota_1\iota_2\iota_3\iota_4+3}.
	\end{eqnarray*}
	The corresponding metric has Ricci constant $\bar\mu=-\frac{4E}{\iota_1\iota_2\iota_3\iota_4+3}$.
\end{example}
\begin{remark}
	In fact, the solutions for Case (ii) can also be expressed as the expression in Case (iii), by putting the roots in the right place to make sure $(\iota_1-\iota_3)(\iota_2-\iota_4)\neq0$ and $(\iota_1-\iota_2)(\iota_1-\iota_3)(\iota_2-\iota_3)\neq0$. But the solutions in Case (i) can't be expressed as such form.
\end{remark}

\noindent{\bf Acknowledgement} 

The author would like to thank Prof. Z. Shen for his constant encouragement and helpful comments.

\setcounter{section}{10}
\setcounter{equation}{0}
\section*{Appendix Deformation formulae for Riemann tensor}
The behavior of the Riemannian curvature and Ricci curvature under $\b$-deformations are stated in the applendix. They will be useful for the further study. The whole derivation, which is done with the help of Maple program, is elementary but very tedious, so it is omitted here. All the results are checked carefully to make sure that any minor mistake is not allowed.

\begin{proposition}\label{RIkunderdeformaion}
	After $\b$-deformation,
	\begin{eqnarray*}
		\bRIk&=&\RIk+\big\{c_{101}\a^2 r^2+c_{102}\a^2(p-2q-t)+c_{103}\b^2r^2+c_{104}\b^2(p-2q-t)+c_{105}\b\ro r\\
		&&+c_{106}\b\so r+c_{107}\b(\qo+\to)+c_{108}\roo r+c_{109}(\ro+\so)^2+c_{1010}(\roho+\soho)\big\}\dIk\\
		&&+\big\{c_{201}r^2+c_{202}(p-2q-t)\big\}\yI\yk+\big\{c_{301}\ro r+c_{302}\so r+c_{303}(\qo+\to)+c_{304}\b r^2\\
		&&+c_{305}\b(p-2q-t)\big\}\yI\bk+c_4 r\yI\rko+\big\{c_{501}(\ro+\so)+c_{502}\b r\big\}\yI\rk+\big\{c_{601}(\ro\\
		&&+\so)+c_{602}\b r\big\}\yI\sk+c_7\b\yI(\qk+\tk)+c_8\yI(\rohk+\sohk)+c_9\yI(\rkho+\skho)\\
		&&+\big\{c_{1001}(\ro+\so)r+c_{1002}(\po-\qqo)+c_{1003}\b p+c_{1004}\b q+c_{1005}\b t+c_{1006}r_{|0}\big\}\bI\yk\\
		&&+\big\{c_{1101}\qoo+c_{1102}\ro^2+c_{1103}\ro\so+c_{1104}\so^2+c_{1105}\roho+c_{1106}\soho+c_{1107}\b(\ro\\
		&&+\so)r+c_{1108}\b(\po-\qqo)+c_{1109}\b\qo+c_{1110}\b\to+c_{1111}\b r_{|0}+c_{1112}\a^2p+c_{1113}\a^2q\\
		&&+c_{1114}\a^2 t\big\}\bI\bk+c_{12}(\ro+\so)\bI\rko+(c_{1301}\ro+c_{1302}\so+c_{1303}\b r)\bI\sko+(c_{1401}\roo\\
		&&+c_{1402}\b\ro+c_{1403}\b\so+c_{1404}\a^2r+c_{1405}\b^2r)\bI\rk+(c_{1501}\roo+c_{1502}\b\ro+c_{1503}\b\so\\
		&&+c_{1504}\a^2r+c_{1505}\b^2r)\bI\sk+c_{16}\b\bI\qko+c_{17}\b\bI\qok+c_{18}\a^2\bI(\pk-\qqk)+c_{19}\b^2\bI\\
		&&\cdot(\pk-\qqk)+c_{20}\b^2\bI\qk+c_{21}\b^2\bI\tk+c_{22}\bI(\roohk-\rkoho)+c_{23}\b\bI\rohk+c_{24}\b\bI\sohk\\
		&&+c_{25}\b\bI\rkho+c_{26}\b\bI\skho+(c_{2701}\a^2+c_{2702}\b^2)\bI\rhk+(c_{2801}\roo+c_{2802}\b\ro+c_{2803}\\
		&&\cdot\b\so+c_{2804}\a^2r+c_{2805}\b^2r)\rIk+c_{29}r\rIo\yk+(c_{3001}\ro+c_{3002}\so+c_{3003}\b r)\rIo\bk\\
		&&+c_{31}\rIo\rko+c_{32}\b\rIo\rk+c_{33}\b\rIo\sk+c_{34}(\ro+\so)\sIo\bk+c_{35}\sIo\sko+c_{36}\b\sIo(\rk\\
		&&+\sk)+\big\{c_{3701}(\ro+\so)+c_{3702}\b r\big\}\rI\yk+(c_{3801}\roo+c_{3802}\b\ro+c_{3803}\b\so+c_{3804}\\
		&&\cdot\a^2 r)\rI\bk+c_{39}\b\rI\rko+c_{40}\b\rI\sko+(c_{4101}\a^2+c_{4102}\b^2)\rI\rk+(c_{4201}\a^2+c_{4202}\b^2)\\
		&&\cdot\rI\sk+\big\{c_{4301}(\ro+\so)+c_{4302}\b r\big\}\sI\yk+(c_{4401}\roo+c_{4402}\b\ro+c_{4403}\b\so+c_{4404}\a^2\\
		&&\cdot r)\sI\bk+c_{45}\b\sI\rko+c_{46}\b\sI\sko+(c_{4701}\a^2+c_{4702}\b^2)\sI\rk+(c_{4801}\a^2+c_{4802}\b^2)\sI\sk\\
		&&+c_{49}\b^2\tIk+c_{50}\b\tIo\bk+c_{51}(\qI+\tI)(\a^2\bk-\b\yk)+c_{52}(\b\sIkho-2\b\sIohk+\sIoho\bk)\\
		&&+(c_{5301}\a^2+c_{5302}\b^2)(\rIhk+\sIhk)+c_{54}(\rIho+\sIho)\yk+c_{55}\b(\rIho+\sIho)\bk,
	\end{eqnarray*}
	where
	\begin{eqnarray*}
		&c_{101}=-\f{4\kappa\rho'^2}{1-b^2\kappa},\,\, c_{102}=-4\rho'^2,\,\, c_{103}=\f{2\kappa}{1-b^2\kappa}(\kappa'+2\kappa\rho')\rho',\,\, c_{104}=2(\kappa'+2\kappa\rho')\rho',&\\
		&c_{105}=-\f{4\kappa'\rho'}{1-b^2\kappa},\,\,
		c_{106}=-\f{4}{1-b^2\kappa}(\kappa^2+\kappa')\rho',\,\,
		c_{107}=-4\kappa\rho',\,\,
	    c_{108}=-\f{2\kappa\rho'}{1-b^2\kappa},&\\
		&c_{109}=4(\rho'^2-\rho''),\,\,
		c_{1010}=-2\rho',\,\,
		c_{201}=\f{4\kappa}{1-b^2\kappa}\rho'^2,\,\,
		c_{202}=4\rho'^2,\,\,
		c_{301}=\f{2\kappa'\rho'}{1-b^2\kappa},&\\
		&c_{302}=\f{2}{1-b^2\kappa}(\kappa^2+\kappa')\rho',\,\,
		c_{303}=2\kappa\rho',\,\,
		c_{304}=-\f{2\kappa}{1-b^2\kappa}(\kappa'+2\kappa\rho')\rho',&\\
		&c_{305}=-2(\kappa'+2\kappa\rho')\rho',\,\,
		c_4=\f{2\kappa\rho'}{1-b^2\kappa},\,\,
		c_{501}=-4(\rho'^2-\rho''),\,\,
		c_{502}=\f{2\kappa'\rho'}{1-b^2\kappa},&\\
		&c_{601}=-4(\rho'^2-\rho''),\,\,
		c_{602}=\f{2}{1-b^2\kappa}(\kappa^2+\kappa')\rho',\,\,
		c_7=2\kappa\rho',\,\,
		c_8=4\rho',\,\,
		c_9=-2\rho',&\\
		&c_{1001}=\f{2}{(1-b^2\kappa)^2}\{(\kappa^2+\kappa')\rho'-2\kappa(1-b^2\kappa)(\rho'^2-\rho'')\},\,\,
		c_{1002}=-\f{2\kappa\rho'}{1-b^2\kappa},&\\
		&c_{1003}=-\f{2\kappa'\rho'}{1-b^2\kappa},\,\,
		c_{1004}=\f{2}{1-b^2\kappa}(\kappa^2+2\kappa')\rho',\,\,
		c_{1005}=\f{2}{1-b^2\kappa}(\kappa^2+\kappa')\rho',&\\
		&c_{1006}=\f{2\kappa\rho'}{1-b^2\kappa},\,\,
		c_{1101}=-\f{\kappa^2}{1-b^2\kappa},\,\,
		c_{1102}=\f{1}{(1-b^2\kappa)^2}\{\kappa\kappa'+b^2\kappa'^2+2(1-b^2\kappa)\kappa''\},&\\
		&c_{1103}=\f{1}{(1-b^2\kappa)^2}\{\kappa^3+6\kappa\kappa'-3b^2\kappa^2\kappa'+2b^2\kappa'^2
		+4(1-b^2\kappa)\kappa''\},&\\
		&c_{1104}=\f{1}{(1-b^2\kappa)^2}\{\kappa^3+5\kappa\kappa'-3b^2\kappa^2\kappa'+b^2\kappa'^2
		+2(1-b^2\kappa)\kappa''\},&\\
		&c_{1105}=\f{\kappa'}{1-b^2\kappa},\,\,
		c_{1106}=\f{\kappa^2+\kappa'}{1-b^2\kappa},&\\
		&c_{1107}=-\f{\kappa}{(1-b^2\kappa)^2}\{\kappa\kappa'+b^2\kappa'^2+2(1-b^2\kappa)\kappa''+2(\kappa^2+\kappa')\rho'-4\kappa(1-b^2\kappa)(\rho'^2-\rho'')\},&\\
		&c_{1108}=\f{\kappa}{1-b^2\kappa}(\kappa'+2\kappa\rho'),\,\,
		c_{1109}=\f{\kappa\kappa'}{1-b^2\kappa},\,\,
		c_{1110}=\f{\kappa}{1-b^2\kappa}(\kappa^2+\kappa'),&\\
		&c_{1111}=-\f{\kappa}{1-b^2\kappa}(\kappa'+2\kappa\rho'),\,\,
		c_{1112}=\f{2\kappa'\rho'}{1-b^2\kappa},\,\,
		c_{1113}=-\f{2}{1-b^2\kappa}(\kappa^2+2\kappa')\rho',&\\
		&c_{1114}=-\f{2}{1-b^2\kappa}(\kappa^2+\kappa')\rho',\,\,
		c_{12}=\f{1}{(1-b^2\kappa)^2}(\kappa^2+\kappa'),\,\,
		c_{1301}=\f{3\kappa'}{1-b^2\kappa},&\\
		&c_{1302}=\f{3}{1-b^2\kappa}(\kappa^2+\kappa'),\,\,
		c_{1303}=-\f{3\kappa\kappa'}{1-b^2\kappa},\,\,
		c_{1401}=-\f{1}{(1-b^2\kappa)^2}(\kappa^2+\kappa'),&\\
		&c_{1402}=-\f{1}{(1-b^2\kappa)^2}\{\kappa\kappa'+b^2\kappa'^2+2(1-b^2\kappa)\kappa''\},&\\
		&c_{1403}=-\f{1}{(1-b^2\kappa)^2}\{2\kappa^3+6\kappa\kappa'-3b^2\kappa^2\kappa'+b^2\kappa'^2+2(1-b^2\kappa)\kappa''\},&\\
		&c_{1404}=-\f{2}{(1-b^2\kappa)^2}\{(\kappa^2+\kappa')\rho'-2\kappa(1-b^2\kappa)(\rho'^2-\rho'')\},&\\
		&c_{1405}=\f{\kappa}{(1-b^2\kappa)^2}\{\kappa\kappa'+b^2\kappa'^2+2(1-b^2\kappa)\kappa''+2(\kappa^2+\kappa')\rho'-4\kappa(1-b^2\kappa)(\rho'^2-\rho'')\},&\\
		&c_{1501}=-\f{1}{(1-b^2\kappa)^2}(\kappa^2+\kappa'),\,\,
		c_{1502}= \f{1}{(1-b^2\kappa)^2}\{\kappa^3-b^2\kappa'^2-2(1-b^2\kappa)\kappa''\},&\\
		&c_{1503}= -\f{1}{(1-b^2\kappa)^2}\{\kappa^3+5\kappa\kappa'-3b^2\kappa^2\kappa'+b^2\kappa'^2+2(1-b^2\kappa)\kappa''\},&\\
		&c_{1504}=-\f{2}{(1-b^2\kappa)^2}\{(\kappa^2+\kappa')\rho'-2\kappa(1-b^2\kappa)(\rho'^2-\rho'')\},&\\
		&c_{1505}=-\f{2}{(1-b^2\kappa)^2}\{(\kappa^2+\kappa')\rho'-2\kappa(1-b^2\kappa)(\rho'^2-\rho'')\},&\\
		&c_{16}=\f{2\kappa^2}{1-b^2\kappa},\,\,
		c_{17}=-\f{\kappa^2}{1-b^2\kappa},\,\,
		c_{18}=\f{2\kappa\rho'}{1-b^2\kappa},\,\,
		c_{19}=-\f{\kappa}{1-b^2\kappa}(\kappa'+2\kappa\rho'),&\\
		&c_{20}=-\f{\kappa\kappa'}{1-b^2\kappa},\,\,
		c_{21}=-\f{\kappa}{1-b^2\kappa}(\kappa^2+\kappa'),\,\,
		c_{22}=-\f{\kappa}{1-b^2\kappa},\,\,
		c_{23}=-\f{2\kappa'}{1-b^2\kappa},&\\
		&c_{24}=-\f{2}{1-b^2\kappa}(\kappa^2+\kappa'),\,\,
		c_{25}=\f{\kappa'}{1-b^2\kappa},\,\,
		c_{26}=\f{1}{1-b^2\kappa}(\kappa^2+\kappa'),\,\,
		c_{2701}=-\f{2\kappa\rho'}{1-b^2\kappa},&\\
		&c_{2702}=\f{\kappa}{1-b^2\kappa}(\kappa'+2\kappa\rho'),\,\,
		c_{2801}=-\f{\kappa}{1-b^2\kappa},\,\,
		c_{2802}=-\f{2\kappa'}{1-b^2\kappa},&\\
		&c_{2803}=-\f{2}{1-b^2\kappa}(\kappa^2+\kappa'),\,\,
		c_{2804}=-\f{2\kappa\rho'}{1-b^2\kappa},\,\,
		c_{2805}=\f{\kappa}{1-b^2\kappa}(\kappa'+2\kappa\rho'),&\\
		&c_{29}=\f{2\kappa\rho'}{1-b^2\kappa},\,\,
		c_{3001}=\f{\kappa'}{1-b^2\kappa},\,\,
		c_{3002}=\f{1}{1-b^2\kappa}(\kappa^2+\kappa'),\,\,
		c_{3003}=-\f{\kappa}{1-b^2\kappa}(\kappa'+2\kappa\rho'),&\\
		&c_{31}=\f{\kappa}{1-b^2\kappa},\,\,
		c_{32}=\f{\kappa'}{1-b^2\kappa},\,\,
	    c_{33}=\f{1}{1-b^2\kappa}(\kappa^2+\kappa'),\,\,
		c_{34}=3\kappa',&\\
		&c_{35}=3\kappa,\,\,
		c_{36}=-3\kappa',\,\,
		c_{3701}=-4(\rho'^2-\rho''),\,\,
		c_{3702}=\f{2\kappa'\rho'}{1-b^2\kappa},\,\,
		c_{3801}=-\f{\kappa'}{1-b^2\kappa},&\\
		&c_{3802}=-\f{1}{1-b^2\kappa}\{b^2\kappa'^2+2(1-b^2\kappa)\kappa''-4\kappa(1-b^2\kappa)(\rho'^2-\rho'')\},&\\
		&c_{3803}=-\f{1}{1-b^2\kappa}\{\kappa\kappa'+b^2\kappa'^2+2(1-b^2\kappa)\kappa''
		-4\kappa(1-b^2\kappa)(\rho'^2-\rho'')\},&\\
		&c_{3804}=-\f{2\kappa'\rho'}{1-b^2\kappa},\,\,
		c_{39}=\f{\kappa'}{1-b^2\kappa},\,\,
		c_{40}=-3\kappa',\,\,
		c_{4101}=4(\rho'^2-\rho''),&\\
		&c_{4102}=\f{1}{1-b^2\kappa}\{b^2\kappa'^2+2(1-b^2\kappa)\kappa''-4\kappa(1-b^2\kappa)(\rho'^2-\rho'')\},&\\
		&c_{4201}=4(\rho'^2-\rho''),&\\
		&c_{4202}=\f{1}{1-b^2\kappa}\{\kappa\kappa'+b^2\kappa'^2+2(1-b^2\kappa)\kappa''
		-4\kappa(1-b^2\kappa)(\rho'^2-\rho'')\},&\\
		&c_{4301}=-4(\rho'^2-\rho''),\,\,
		c_{4302}=\f{2}{1-b^2\kappa}(\kappa^2+\kappa')\rho',\,\,
		c_{4401}=-\f{1}{1-b^2\kappa}(\kappa^2+\kappa'),&\\
		&c_{4402}=-\f{1}{1-b^2\kappa}\{\kappa\kappa'+b^2\kappa'^2+2(1-b^2\kappa)\kappa''
		-4\kappa(1-b^2\kappa)(\rho'^2-\rho'')\},&\\
		&c_{4403}=-\f{1}{1-b^2\kappa}\{\kappa^3+2\kappa\kappa'+b^2\kappa'^2+2(1-b^2\kappa)\kappa''
		-4\kappa(1-b^2\kappa)
		(\rho'^2-\rho'')\},&\\
		&c_{4404}=-\f{2}{1-b^2\kappa}(\kappa^2+\kappa')\rho',\,\,
		c_{45}=\f{1}{1-b^2\kappa}(\kappa^2+\kappa'),\,\,
		c_{46}=-3\kappa',\,\,
		c_{4701}=4(\rho'^2-\rho''),&\\
		&c_{4702}=\f{1}{1-b^2\kappa}\{\kappa\kappa'+b^2\kappa'^2+2(1-b^2\kappa)\kappa''
		-4\kappa(1-b^2\kappa)(\rho'^2-\rho'')\},&\\
		&c_{4801}=4(\rho'^2-\rho''),&\\
		&c_{4802}=\f{1}{1-b^2\kappa}\{\kappa^3+2\kappa\kappa'+b^2\kappa'^2+2(1-b^2\kappa)\kappa''-4\kappa(1-b^2\kappa)
		(\rho'^2-\rho'')\},&\\
		&c_{49}=-\kappa^2,\,\,
		c_{50}=\kappa^2,\,\,
		c_{51}=-2\kappa\rho',\,\,
		c_{52}=\kappa,&\\
		&c_{5301}=-2\rho',\,\,
		c_{5302}=\kappa'+2\kappa\rho',\,\,
	    c_{54}=2\rho',\,\,
		c_{55}=-(\kappa'+2\kappa\rho').&
	\end{eqnarray*}
\end{proposition}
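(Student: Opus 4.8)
The plan is to reduce everything to the behaviour of the geodesic spray, since $\bRIk$ depends only on the deformed quadratic form $\baij=e^{2\rho}(\aij-\kappa\bi\bj)$ and not on the length factor $\nu$. For a Riemannian metric the spray-form Riemann curvature is recovered from the spray coefficients $\G$ by the standard identity
\begin{eqnarray*}
R^i{}_k=2\pp{G^i}{x^k}-\ppp{G^i}{x^j}{y^k}\,y^j+2G^j\ppp{G^i}{y^j}{y^k}-\pp{G^i}{y^j}\pp{G^j}{y^k}.
\end{eqnarray*}
Because $\G$ and $\bG$ are both Riemannian sprays, their difference $Q^i:=\bG-\G=\tfrac12 D^i{}_{jk}y^jy^k$ is quadratic in $y$, where $D^i{}_{jk}=\tfrac12\bar a^{il}(\nabla_j\bar a_{lk}+\nabla_k\bar a_{lj}-\nabla_l\bar a_{jk})$ is the difference of the two Levi-Civita connections and $\nabla$ is the $\a$-connection. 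So the first task is to compute $Q^i$, and the second is to substitute $\bG=\G+Q^i$ into the formula above and collect by tensorial type.

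First I would compute $D^i{}_{jk}$. Using $\nabla_k\aij=0$, $(b^2)_{|k}=2(\rk+\sk)$ and $b_{i|k}=r_{ik}+s_{ik}$, one gets
\begin{eqnarray*}
\nabla_k\baij=e^{2\rho}[\,4\rho'(\rk+\sk)(\aij-\kappa\bi\bj)-2\kappa'(\rk+\sk)\bi\bj-\kappa(b_{i|k}\bj+\bi b_{j|k})\,],
\end{eqnarray*}
and then one raises the free index with $\bar a^{il}=e^{-2\rho}(a^{il}+\tfrac{\kappa}{1-b^2\kappa}b^ib^l)$. Contracting the resulting $D^i{}_{jk}$ with $y^jy^k$, so that $\rij y^iy^j=\roo$, $\sij y^iy^j=0$, $(\rk+\sk)y^k=\ro+\so$ and $\bk y^k=\b$, expresses $Q^i$ as a combination of $\yI,\bI,\rIo,\sIo,\rI,\sI$ whose scalar coefficients are built from $\a^2,\b,\roo,\ro,\so$ and the deformation functions $\kappa,\kappa',\rho'$.

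Next I would substitute $\bG=\G+Q^i$ into the spray-curvature identity and subtract $\RIk$. The terms linear in $Q$ reorganize, after rewriting the coordinate derivatives through the horizontal covariant derivative of $\a$, into $\a$-covariant derivatives of $Q$ (the leading one being $2Q^i{}_{|k}$, together with a vertically differentiated horizontal term of the form $(y^jQ^i{}_{|j})_{\cdot k}$); since $\a$ is Riemannian the symbol ``$|$'' is exactly the Levi-Civita derivative acting on the tensorial ingredients of $Q^i$, so these terms produce the first covariant derivatives $\roho,\soho,\rohk,\skho,\sIohk,\sIoho,\rIhk,\sIhk,\dots$ appearing in the statement, the chain rule on the scalar coefficients generating the second-order factors $\kappa''$ and $\rho''$. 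The terms quadratic in $Q$, namely $2Q^jQ^i{}_{\cdot j\cdot k}-Q^i{}_{\cdot j}Q^j{}_{\cdot k}$, are bilinear in $\{\rij,\sij\}$ and hence collapse into the tensors $p_{ij},q_{ij},t_{ij}$ and their contractions $p-2q-t$, $\qo+\to$, etc. Collecting the whole expression along the independent tensorial structures $\dIk$, $\yI\yk$, $\yI\bk$, $\bI\yk$, $\bI\bk$, $\yI\rk$, $\rIk$, $\sIk$, $\tIk$, $\bI\rko$, $\dots$ then yields the coefficients $c_{\bullet}$ as rational functions of $\kappa,\kappa',\kappa'',\rho',\rho''$.

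I expect the genuine obstacle to be the sheer volume and the accuracy of the bookkeeping rather than any conceptual step: there are more than fifty independent tensor monomials, the chain-rule factors $\kappa',\kappa'',\rho',\rho''$ must be tracked through every covariant derivative of a $b^2$-dependent coefficient, and raising or lowering with $\bar a^{ij}$ repeatedly reintroduces the factor $(1-b^2\kappa)^{-1}$. It is worth noting that no use of the Ricci identity (\ref{relation1}) is needed here: the second covariant derivatives of $\b$ are retained as first covariant derivatives of $\rij$ and $\sij$ rather than converted into curvature, which is why the curvature of $\a$ enters the final formula only through the leading term $\RIk$. The safe route, as the author indicates, is to carry out the collection by computer algebra and then check the output against metrics whose curvature is known independently, such as those of constant sectional curvature and Randers metrics of constant flag curvature.
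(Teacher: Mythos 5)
Your proposal is correct and is essentially the paper's own (omitted) argument: the paper gives no derivation for Proposition \ref{RIkunderdeformaion}, stating only that the formula was produced by a Maple computation and rigorously checked against known examples, and the computation you outline --- the Levi-Civita connection-difference tensor $D^i{}_{jk}$ built from $\nabla_k\bar a_{ij}$, substitution of $\bar G^i=G^i+Q^i$ into the spray-curvature identity, and collection by tensorial type with the $\kappa',\kappa'',\rho',\rho''$ chain-rule factors and the $(1-b^2\kappa)^{-1}$ factors tracked through, delegating the bookkeeping to computer algebra and verifying on metrics of known curvature --- is precisely that computation, in line with the $\b$-deformation methodology of the author's earlier work cited for Propositions \ref{relationunderdeformations}--\ref{toounderdeformaion}. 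Your observations that $\bRIk$ is independent of $\nu$ and that the Ricci identity is never invoked (so that the curvature of $\a$ enters only through the leading term $\RIk$, all second derivatives of $\b$ being kept as first covariant derivatives of $\rij$ and $\sij$) are both consistent with the structure of the stated formula.
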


\begin{proposition}\label{Rbbunderdeformaion}
	After $\b$-deformation,
	\begin{eqnarray*}
		\bRbb&=&c_1\Rbb+c_{2}\roo r+c_{3}\ro^2+c_{4}\ro\so+c_{5}\so^2+c_{6}\poo+c_{7}\qoo
		+c_{8}\too+c_{9}\roohb+c_{10}\roho\\
		&&+c_{11}\soho+c_{12}\b\ro r+c_{13}\b\so r+c_{14}\b\qo+c_{15}\b\to+c_{16}\b\rohb+c_{17}\b\sohb+c_{18}\a^2r^2\\
		&&+c_{19}\b^2r^2+c_{20}\a^2p+c_{21}\b^2p+c_{22}\a^2q+c_{23}\a^2q+c_{24}\a^2t+c_{25}\b^2t+c_{26}\a^2\rhb\\
		&&+c_{27}\b^2\rhb,
	\end{eqnarray*}
	where
	\begin{eqnarray*}
		&c_1=\f{e^{-2\rho}\nu^2}{1-b^2\kappa},\,\,
		c_{2}=-\f{e^{-2\rho}\nu^2}{(1-b^2\kappa)^3}\{\kappa+2b^2\kappa'-b^4\kappa\kappa'
		+2b^2(1-b^2\kappa)\kappa\rho'\},&\\
		&c_{3}=\f{e^{-2\rho}\nu^2}{(1-b^2\kappa)^3}\{\kappa+2b^2\kappa'+b^6\kappa'^2+2b^4(1-b^2\kappa)\kappa''
		+4b^2(1-b^2\kappa)^2
		(\rho'^2-\rho'')\},&\\
		&c_{4}=\f{2b^2e^{-2\rho}\nu^2}{(1-b^2\kappa)^3}\{\kappa^2+2b^2\kappa\kappa'+b^4\kappa'^2+2(1-b^2\kappa)(2\kappa'
		+b^2\kappa'')+4(1-b^2\kappa)^2(\rho'^2-\rho'')\},&\\
		&c_{5}=\f{e^{-2\rho}\nu^2}{(1-b^2\kappa)^3}\{\kappa(3-3b^2\kappa+b^4\kappa^2)+2b^2\kappa'+b^6\kappa'^2
		+2b^2(1-b^2\kappa)
		(2\kappa'+b^2\kappa'')&\\
		&+4b^2(1-b^2\kappa)^2(\rho'^2-\rho'')\},\,\,
		c_{6}=-\f{b^2\kappa e^{-2\rho}\nu^2}{(1-b^2\kappa)^2},\,\,
		c_{7}=-\f{2b^2\kappa e^{-2\rho}\nu^2}{(1-b^2\kappa)^2},\,\,
		c_{8}=\f{b^2\kappa e^{-2\rho}\nu^2}{1-b^2\kappa},&\\
		&c_{9}=-\f{b^2\kappa e^{-2\rho}\nu^2}{(1-b^2\kappa)^2},\,\,
		c_{10}=\f{b^2e^{-2\rho}\nu^2}{(1-b^2\kappa)^2}\{\kappa+b^2\kappa'-2(1-b^2\kappa)\rho'\},&\\
		&c_{11}=\f{b^2e^{-2\rho}\nu^2}{(1-b^2\kappa)^2}\{\kappa+b^2\kappa'-2(1-b^2\kappa)\rho'\},\,\,
		c_{12}=-\f{2e^{-2\rho}\nu^2}{(1-b^2\kappa)^3}\{b^2(\kappa+b^2\kappa')\kappa'&\\
		&+2b^2(1-b^2\kappa)\kappa''-2\kappa(1-b^2\kappa)\rho'+4(1-b^2\kappa)^2(\rho'^2-\rho'')\},&\\
		&c_{13}=-\f{2e^{-2\rho}\nu^2}{(1-b^2\kappa)^3}\{(\kappa+b^2\kappa')^2+2(1-b^2\kappa)(2\kappa'+b^2\kappa'')
		+4(1-b^2\kappa)^2(\rho'^2-\rho'')\},&\\
		&c_{14}=\f{2\kappa e^{-2\rho}\nu^2}{(1-b^2\kappa)^2},\,\,
		c_{15}=-\f{2\kappa e^{-2\rho}\nu^2}{1-b^2\kappa},\,\,
		c_{16}=-\f{2e^{-2\rho}\nu^2}{(1-b^2\kappa)^2}\{b^2\kappa'-2(1-b^2\kappa)\rho'\},&\\
		&c_{17}=-\f{2e^{-2\rho}\nu^2}{(1-b^2\kappa)^2}\{\kappa+b^2\kappa'-2(1-b^2\kappa)\rho'\},&\\
		&c_{18}=-\f{2e^{-2\rho}\nu^2}{(1-b^2\kappa)^3}\{(\kappa+2b^2\kappa'-b^4\kappa\kappa')\rho'-2(1-b^2\kappa)^2\rho'^2+2(1-b^2\kappa)\rho''\},&\\
		&c_{19}=\f{e^{-2\rho}\nu^2}{(1-b^2\kappa)^3}\{(\kappa+b^2\kappa')\kappa'+2(1-b^2\kappa)\kappa''+2(\kappa^2
		+2\kappa'-b^2\kappa\kappa')
		\rho'+4\kappa(1-b^2\kappa)\rho''\},&\\
		&c_{20}=\f{2e^{-2\rho}\nu^2}{(1-b^2\kappa)^2}\rho'\{1+b^4\kappa'-2b^2(1-b^2\kappa)\rho'\},&\\
		&c_{21}=-\f{e^{-2\rho}\nu^2}{(1-b^2\kappa)^2}\{\kappa'+2(\kappa+b^2\kappa')\rho'-4(1-b^2\kappa)\rho'^2\},&\\
		&c_{22}=-\f{4b^2e^{-2\rho}\nu^2}{(1-b^2\kappa)^2}\rho'\{\kappa+b^2\kappa'-2(1-b^2\kappa)\rho'\},&\\
		&c_{23}=\f{4e^{-2\rho}\nu^2}{(1-b^2\kappa)^2}\rho'\{\kappa+b^2\kappa'-2(1-b^2\kappa)\rho'\},&\\
		&c_{24}=\f{2e^{-2\rho}\nu^2}{(1-b^2\kappa)^2}\rho'\{1-2b^2\kappa-b^4\kappa'+2b^2(1-b^2\kappa)\rho'\},&\\
		&c_{25}=-\f{e^{-2\rho}\nu^2}{(1-b^2\kappa)^2}\{\kappa^2+\kappa'-2(\kappa+b^2\kappa')\rho'+4(1-b^2\kappa)\rho'^2\},&\\
		&c_{26}=-\f{2e^{-2\rho}\nu^2}{(1-b^2\kappa)^2}\rho',\,\,
		c_{27}=\f{e^{-2\rho}\nu^2}{(1-b^2\kappa)^2}(\kappa'+2\kappa\rho').&
	\end{eqnarray*}
\end{proposition}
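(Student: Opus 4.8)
The plan is to obtain $\bRbb$ by contracting the master curvature formula of Proposition \ref{RIkunderdeformaion} against the deformed data $\bar b_i$ and $\bar b^k$, since $\bRbb$ is by definition $\bar R^i{}_k\bar b_i\bar b^k$. Because the $\b$-deformation gives $\bar b_i=\nu\bi$ and $\bar b^k=\frac{e^{-2\rho}\nu}{1-b^2\kappa}\bK$, one has $\bar b_i\bar b^k=\frac{e^{-2\rho}\nu^2}{1-b^2\kappa}\bi\bK$, so the whole task reduces to multiplying the right-hand side of Proposition \ref{RIkunderdeformaion} by $\bi\bK$ and then scaling by the common prefactor $\frac{e^{-2\rho}\nu^2}{1-b^2\kappa}$. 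This prefactor is exactly what appears in every coefficient $c_1,\dots,c_{27}$, and in particular the leading term $\RIk\bi\bK=\Rbb$ at once reproduces $c_1=\frac{e^{-2\rho}\nu^2}{1-b^2\kappa}$.

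First I would dispose of the purely algebraic terms. Contracting the $\dIk$, $\yI\yk$, $\yI\bk$, $\bI\yk$, $\bI\bk$ blocks together with the various $\rIo$, $\sIo$, $\rk$, $\sk$ factors against $\bi\bK$ is governed entirely by the bookkeeping rules of Section \ref{idmiaindinngngng}: $\bI\bi=b^2$, $\yI\bi=\b$, $\yk\bK=\b$, $\rij\bI=\rj$, $\sij\bI=\sj$, supplemented by the identities $\si\bI=0$, $q=\qi\bI=\qqi\bI$, $\rIo\bi=\ro$ and $\sIo\bi=\so$. Each such term collapses to one of the scalars $\a^2r^2$, $\b^2r^2$, $\a^2p$, $\b^2p$, $\a^2q$, $\b^2q$, $\a^2t$, $\b^2t$, $\ro^2$, $\ro\so$, $\so^2$, $\b\ro r$, $\b\so r$, $\b\qo$, $\b\to$ or $\roo r$, and summing their coefficients delivers $c_2$ through $c_8$ and $c_{12}$ through $c_{25}$.

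The substantive part is the covariant-derivative block, namely the $c_{22},c_{23},\dots,c_{26}$, $c_{52}$, $c_{54}$, $c_{55}$ terms of Proposition \ref{RIkunderdeformaion}. Here $\bi$ and $\bK$ cannot be pushed through the symbol $_{|}$, because $\b$ has non-constant length; instead I would commute them past the covariant derivative using the Leibniz rule with $b_{i|j}=\rij+\sij$. This is precisely the mechanism that manufactures the quadratic algebraic scalars $\poo$, $\qoo$, $\too$ that a naive contraction would never see: for example $\bK\rkoho=\roho-\poo-\qoo$ up to the standard index manipulations, and the parallel processing of the $\sij$-derivative pieces produces $\too$ alongside the genuine derivative scalars $\roho$, $\soho$, $\roohb$, $\b\rohb$, $\b\sohb$, $\a^2\rhb$ and $\b^2\rhb$, fixing $c_9,c_{10},c_{11},c_{16},c_{17},c_{26},c_{27}$. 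Wherever a second covariant derivative or a stray Riemann contribution surfaces, I would reabsorb it by the priori identity (\ref{relation1}); this is what leaves $\Rbb$ as the only curvature term surviving on the right.

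Matching the assembled algebraic and derivative contributions term by term against $c_1,\dots,c_{27}$ finishes the proof. The main obstacle is not conceptual but the combinatorial volume: dozens of contractions, each needing the correct sign from the symmetry of $\rij$ and the antisymmetry of $\sij$ and the correct Leibniz correction, must be summed and simplified without error. As the paper already stresses for the appendix formulae, this is exactly why the derivation is carried out in Maple and cross-checked against known examples rather than by hand.
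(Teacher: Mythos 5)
Your strategy is sound and, in substance, it works. From $\bbi=\nu\bi$ and $\bar b^i=\f{e^{-2\rho}\nu}{1-b^2\kappa}b^i$ one gets $\bRbb=\f{e^{-2\rho}\nu^2}{1-b^2\kappa}\,\bRIk\bi\bK$, and Proposition \ref{Rbbunderdeformaion} then follows from Proposition \ref{RIkunderdeformaion} by contraction, using only the algebraic rules of Section \ref{idmiaindinngngng} together with the first-order Leibniz rule $b^m{}_{|k}=r^m{}_k+s^m{}_k$ applied to the already-differentiated blocks. Your sample identity $\bK\rkoho=\roho-\poo-\qoo$ is correct, and spot checks confirm the mechanism reproduces the stated coefficients: the only source of $\poo$ is the $c_{22}\,\bI(\roohk-\rkoho)$ block of the master formula, whose contribution is exactly the stated $c_6=-\f{b^2\kappa e^{-2\rho}\nu^2}{(1-b^2\kappa)^2}$; the off-basis scalars the Leibniz step creates, such as $\b r_{|0}$, $\b\po$, $\b\qqo$, cancel identically, as they must; and the contraction also shows that the second ``$\a^2q$'' in the statement (the $c_{23}$ term) should read $\b^2q$. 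I would stress, however, that the paper offers no proof of this proposition at all --- the appendix formulae are declared to be Maple output with the derivation omitted --- so your route is a genuine, self-contained alternative: it exhibits the $\Rbb$-formula as a corollary of the master formula, explains structurally which scalars can occur, and is hand-checkable term by term, at the cost of precisely the bookkeeping you describe.

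Two corrections to your outline. First, the appeal to (\ref{relation1}) is spurious: beyond pure contraction, the only manipulation ever needed is the first-order Leibniz rule above, which cannot introduce curvature; curvature could enter only by commuting two covariant derivatives, and that is never required. So $\Rbb$ survives as the sole curvature term simply because $\RIk$ is the sole curvature term of Proposition \ref{RIkunderdeformaion} and contraction is linear over its terms --- not because anything is ``reabsorbed'' via (\ref{relation1}). The step you invoke it for does not occur. Second, a sign caution at exactly the kind of place where such computations die: since the first index of $t_{ij}=s_{im}s^m{}_j$ is contracted against the antisymmetric factor, one has $\too=-s_{m0}s^m{}_0$, so the $s$-analogue of your identity carries a plus sign, $\sIoho\bi=\soho-\qoo+\too$; this is what produces the stated $c_8=+\f{b^2\kappa e^{-2\rho}\nu^2}{1-b^2\kappa}$ rather than its negative. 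With these repairs, your proposal is a valid proof modulo the Maple-level bookkeeping that the paper itself also delegates to machine computation.
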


\begin{proposition}\label{Ricoounderdeformation}
	After $\b$-deformation,
	\begin{eqnarray*}
		\bRicoo&=&\Ricoo+C_1\roo\rIi+C_2\roo r+C_3\ro^2+C_4\ro\so+C_5\so^2+C_6\qoo+C_7\too+C_8\roohb\\
		&&+C_9\roho+C_{10}\soho+C_{11}\b\ro\rIi+C_{12}\b\so\rIi+C_{13}\b\ro r+C_{14}\b\so r+C_{15}\b\po\\
		&&+C_{16}\b\qqo+C_{17}\b\qo+C_{18}\b\to+C_{19}\b\sIohi+C_{20}\b\rohb+C_{21}\b\sohb+C_{22}\a^2\rIi r\\
		&&+C_{23}\b^2\rIi r+C_{24}\a^2 r^2+C_{25}\b^2 r^2+C_{26}\b^2\tIi+C_{27}\a^2 p+C_{28}\b^2 p+C_{29}\a^2 q\\
		&&+C_{30}\b^2 q+C_{31}\a^2 t+C_{32}\b^2 t+C_{33}\a^2\rIhi+C_{34}\b^2\rIhi+C_{35}\a^2\sIhi+C_{36}\b^2\sIhi\\
		&&+C_{37}\a^2\rhb+C_{38}\b^2\rhb,
	\end{eqnarray*}
	where
	\begin{eqnarray*}
		&C_1=-\f{\kappa}{1-b^2\kappa},\,\,
		C_2=-\f{1}{(1-b^2\kappa)^2}\{2(1-b^2\kappa)\kappa'+\kappa(\kappa+b^2\kappa')+2(n-2)\kappa(1-b^2\kappa)\rho'\},&\\
		&C_3=\f{1}{(1-b^2\kappa)^2}\{\kappa^2+2\kappa'+b^4\kappa'^2+2b^2(1-b^2\kappa)\kappa''
		+4(n-2)(1-b^2\kappa)^2(\rho'^2-\rho'')\},&\\
		&C_4= \f{2}{(1-b^2\kappa)^2}\{\kappa^2+4\kappa'-b^2\kappa\kappa'+b^4\kappa'^2+2b^2(1-b^2\kappa)\kappa''+4(n-2)(1-b^2\kappa)^2&\\
		&\cdot(\rho'^2-\rho'')\},\,\,
		C_5=-\f{1}{(1-b^2\kappa)^2}\{2b^2\kappa^3-3\kappa^2-6\kappa'+4b^2\kappa\kappa'-b^4\kappa'^2-2b^2(1-b^2\kappa)\kappa''&\\
		&-4(n-2)(1-b^2\kappa)^2(\rho'^2-\rho'')\},\,\,
		C_6=-\f{2\kappa}{1-b^2\kappa},\,\,
		C_7=-2\kappa,\,\,
		C_8=-\f{\kappa}{1-b^2\kappa},&\\
		&C_9=C_{10}=\f{1}{1-b^2\kappa}\{\kappa+b^2\kappa'-2(n-2)(1-b^2\kappa)\rho'\},&\\
		&C_{11}=-\f{2}{1-b^2\kappa}\kappa',\,\,
		C_{12}=-\f{2}{1-b^2\kappa}(\kappa^2+\kappa'),&\\
		&C_{13}=-\f{2}{(1-b^2\kappa)^2}\{(\kappa+b^2\kappa')\kappa'+2(1-b^2\kappa)\kappa''
		+2(n-2)(1-b^2\kappa)\kappa'\rho'\},&\\
		&C_{14}=-\f{2}{(1-b^2\kappa)^2}\{5\kappa(1-b^2\kappa)\kappa'+\kappa^2(\kappa+b^2\kappa')
		+b^2(\kappa^2+\kappa')\kappa'+2(1-b^2\kappa)\kappa''&\\
		&+2(n-2)(1-b^2\kappa)(\kappa^2+\kappa')\rho'\},\,\,
		C_{15}=\f{2}{1-b^2\kappa}\kappa',\,\,
		C_{16}=-\f{2}{1-b^2\kappa}(\kappa^2+\kappa'),&\\
		&C_{17}=\f{2}{1-b^2\kappa}\{\kappa^2-3(1-b^2\kappa)\kappa'-2(n-2)\kappa(1-b^2\kappa)\rho'\},&\\
		&C_{18}=-6\kappa'-4(n-2)\kappa\rho',\,\,
		C_{19}=-2\kappa,\,\,
		C_{20}=-\f{2}{1-b^2\kappa}\kappa',&\\
		&C_{21}=-\f{2}{1-b^2\kappa}(\kappa^2+\kappa'),\,\,
		C_{22}=-\f{2\kappa}{1-b^2\kappa}\rho',\,\,
		C_{23}=\f{\kappa}{1-b^2\kappa}(\kappa'+2\kappa\rho'),&\\
		&C_{24}=-\f{2}{(1-b^2\kappa)^2}\{2(1-b^2\kappa)\kappa'\rho'+\kappa(\kappa+b^2\kappa')\rho'
		+2(n-2)\kappa(1-b^2\kappa)\rho'^2&\\
		&+2\kappa(1-b^2\kappa)\rho''\},\,\,
		C_{25}=\f{\kappa}{(1-b^2\kappa)^2}\{(\kappa+b^2\kappa')\kappa'+2(1-b^2\kappa)\kappa''
		&\\
		&+2(n-1)(1-b^2\kappa)\kappa'\rho'+4(n-2)\kappa(1-b^2\kappa)\rho'^2+2(\kappa^2+\kappa')\rho'+4\kappa(1-b^2\kappa)\rho''\},&\\
		&C_{26}=-\kappa^2,\,\,
		C_{27}=\f{2}{1-b^2\kappa}\{(\kappa+b^2\kappa')\rho'-2(n-2)(1-b^2\kappa)\rho'^2-2(1-b^2\kappa)\rho''\},&\\
		&C_{28}=-\f{1}{1-b^2\kappa}\{(\kappa-b^2\kappa')\kappa'-2(1-b^2\kappa)\kappa''+2\kappa(\kappa+b^2\kappa')\rho'-2(n-2)(1-b^2\kappa)\kappa'\rho'&\\
		&-4(n-2)\kappa(1-b^2\kappa)\rho'^2-4\kappa(1-b^2\kappa)\rho''\},\,\,
		C_{29}=-\f{4}{1-b^2\kappa}\{(\kappa+b^2\kappa')\rho'&\\
		&-2(n-2)(1-b^2\kappa)\rho'^2-2(1-b^2\kappa)\rho''\},\,\, C_{30}=-\f{2}{1-b^2\kappa}\{(\kappa+b^2\kappa')\kappa'+2(1-b^2\kappa)\kappa''&\\
		&-2\kappa(\kappa+b^2\kappa')\rho'+2(n-2)(1-b^2\kappa)\kappa'\rho'+4(n-2)\kappa(1-b^2\kappa)\rho'^2+4\kappa(1-b^2\kappa)\rho''\},&\\
		&C_{31}=-\f{2}{1-b^2\kappa}\{(\kappa+b^2\kappa')\rho'-2(n-2)(1-b^2\kappa)\rho'^2-2(1-b^2\kappa)\rho''\},&\\
		&C_{32}=-\f{1}{1-b^2\kappa}\{2\kappa^3+(\kappa+b^2\kappa')\kappa'+2(1-b^2\kappa)\kappa''-2\kappa(\kappa+b^2\kappa')\rho'&\\
		&+2(n-2)(1-b^2\kappa)\kappa'\rho'+4(n-2)\kappa(1-b^2\kappa)\rho'^2+4\kappa(1-b^2\kappa)\rho''\},&\\
		&C_{33}=-2\rho',\,\,
		C_{34}=\kappa'+2\kappa\rho',\,\,
		C_{35}=-2\rho',\,\,
		C_{36}=\kappa'+2\kappa\rho',&\\
		&C_{37}=-\f{2\kappa}{1-b^2\kappa}\rho',\,\,
		C_{38}=\f{\kappa}{1-b^2\kappa}(\kappa'+2\kappa\rho').&
	\end{eqnarray*}
\end{proposition}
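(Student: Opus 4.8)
The whole statement is a trace of Proposition \ref{RIkunderdeformaion}, so the plan is to derive it from that formula rather than from scratch. With the convention $Ric_{ij}=\tfrac12[R^m{}_m]_{y^iy^j}$ of Section \ref{idmiaindinngngng} and the fact that $R^m{}_m$ is homogeneous of degree two in $y$, Euler's relation gives $\Ricoo=Ric_{ij}\yI y^j=R^m{}_m$, and likewise $\bRicoo=\bar R^m{}_m$ for the deformed spray. Hence I would set $k=i$ in the expression for $\bRIk$ and sum over $i$; once every contraction is reduced to the canonical degree-two monomials in $y$ (with coefficients that are functions of $b^2$ alone), the result must be the claimed expression, and the constants $C_1,\dots,C_{38}$ are read off by matching channel by channel.

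The trace itself is mechanical once the contraction rules of Section \ref{idmiaindinngngng} are fixed: $\dIk\mapsto n$, $\yI\yk\mapsto\a^2$, $\yI\bk\mapsto\b$, $\bI\yk\mapsto\b$, $\bI\bk\mapsto b^2$ (recall $\yk:=a_{kl}y^l$), together with ``raise/lower with $a$, kill with $b$, turn into ${}_0$ with $y$''. Each of the fifty-odd blocks of $\bRIk$ then collapses to a multiple of one of the monomials $\a^2,\b^2,\ro^2,\ro\so,\so^2,\roo,\qoo,\too,\dots$, carrying the product of the relevant $c_{\bullet}$ with a power of $b^2$; since each $C_i$ is allowed to depend on $b^2$, those spurious powers are simply absorbed into the coefficients. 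The only place demanding attention here is the non-symmetric tensor $q_{ij}$: contracting $\bI\qko$ and $\bI\qok$ produces $\qo$ and $\qqo$ respectively, which is precisely the origin of the distinct coefficients $C_{16}$ and $C_{17}$.

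The genuinely delicate terms are the bilinears in $r^i{}_0,s^i{}_0$ and the covariant-derivative blocks, since these are where $\poo,\qoo,\too$ and the second-order scalars are generated. Tracing $\rIo\rko$, the mixed blocks, and $\sIo\sko$ yields $\poo,\qoo$ and $-\too$ via the identities $r^m{}_0 r_{m0}=\poo$, $r^m{}_0 s_{m0}=\qoo$, $s^m{}_0 s_{m0}=-\too$; whenever a covariant derivative is contracted against $b$ I would apply the Leibniz rule with $b^m{}_{|0}=r^m{}_0+s^m{}_0$ (from $b_{i|j}=\rij+\sij$), so that, for example, $\sIoho\bi=b^m s_{m0|0}=\soho-\qoo+\too$ and $b^m r_{m0|0}=\roho-\poo-\qoo$. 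These reductions are exactly what make the $\poo$-contributions cancel (the $\poo$ from $\rIo\rko$ against the one hidden in $b^m r_{m0|0}$) while leaving the surviving $\qoo$- and $\too$-coefficients $C_6,C_7$; they also pin down $C_8$–$C_{10}$, $C_{19}$–$C_{21}$ and $C_{33}$–$C_{38}$ and convert the remaining derivative contractions into $\roho,\soho,\sIohi,\roohb,\rohb,\sohb,\rIhi,\sIhi,\rhb$.

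The main obstacle is not any single reduction but the scale of the bookkeeping: several dozen traced monomials must be sorted into the thirty-eight channels, each $c_{\bullet}(\kappa,\kappa',\kappa'',\rho',\rho'')$ carried along, and the Leibniz reductions applied consistently so that all would-be $\poo$ and cross terms cancel. This is why the derivation is best delegated to Maple. To guard against slips I would verify the contracted coefficients on the constant-curvature data of Section \ref{diamdinaihgng}, and check that under the Killing hypothesis $\rij=0$ the formula collapses, via the relations (\ref{relation}), to Lemma \ref{RicoounderdeformationKilling}.
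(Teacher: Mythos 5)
Your proposal is correct, but there is strictly speaking no written proof in the paper to compare it against: the appendix states that the derivation of Propositions \ref{RIkunderdeformaion}--\ref{Ricoounderdeformation} was done by Maple, is ``elementary but very tedious'', and is omitted, with correctness supported only by verification on known examples. Your route --- obtaining Proposition \ref{Ricoounderdeformation} as the trace of Proposition \ref{RIkunderdeformaion} --- is therefore a genuinely different and more auditable argument than the paper's (undisclosed) direct computation. The reduction is sound: since $R^m{}_m$ is quadratic in $y$, the definition $Ric_{ij}=\frac12[R^m{}_m]_{y^iy^j}$ gives $\Ricoo=R^m{}_m$ by Euler's relation, and likewise for $\ba$, so setting $k=i$ and summing in Proposition \ref{RIkunderdeformaion} does produce $\bRicoo$. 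The reductions you single out are exactly the right ones and they check out: $\bij=\rij+\sij$ gives $b^i{}_{|0}=\rIo+\sIo$, whence $b^mr_{m0|0}=\roho-\poo-\qoo$ and $b^ms_{m0|0}=\soho-\qoo+\too$; the two $\poo$-contributions (from $c_{31}\rIo\rko$ and from $c_{22}\bI\rkoho$) cancel because $c_{31}+c_{22}=0$, consistent with the absence of a $\poo$ channel in the statement; and $\bI\qko$, $\bI\qok$ trace to $\qo$, $\qqo$ respectively. One cancellation you did not flag but which must (and does) occur: the $\b r_{|0}$ terms released by $c_{1006}$, $b^2c_{1111}$, $c_{25}$ and $c_{55}$ sum to zero, as no $\b r_{|0}$ channel appears in the proposition. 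Spot checks confirm the claimed coefficients, e.g. $C_6=b^2c_{1101}+c_{22}-c_{52}=-\f{2\kappa}{1-b^2\kappa}$, $C_7=-c_{35}+c_{52}=-2\kappa$, and the $\roho$ channel $nc_{1010}+c_8+c_9+b^2c_{1105}-c_{22}+c_{54}$ reproduces $C_9$. As for what each approach buys: the paper's direct machine computation is logically independent of Proposition \ref{RIkunderdeformaion}, whereas yours is conditional on it; in exchange, yours consists of purely algebraic contractions a reader can verify by hand, and the internal cancellations above, together with the collapse to Lemma \ref{RicoounderdeformationKilling} under $\rij=0$ via (\ref{relation}), cross-check both appendix propositions simultaneously --- which is in the same spirit as, but sharper than, the paper's own practice of validating the Maple output against known examples.
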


\noindent Changtao Yu\\
School of Mathematical Sciences, South China Normal
University\\
Guangzhou, 510631, P. R. China\\
aizhenli@gmail.com

\begin{thebibliography}{}
	\bibitem{db-robl-orso}
	D. Bao and C. Robles, {On Randers spaces of constant flag curvature}, Rep. on Math. Phys. {51} (2003) 9-42.
	\bibitem{db-robl-szm-zerm}
	D. Bao, C. Robles and Z. Shen, {Zermelo navigation on Riemannian manifolds}, J. Diff. Geom. {66} (2004) 391-449.
	\bibitem{BGPP}
	V.A. Belinskii, G.W. Gibbons, C.N. Pope and D.N. Page, {Asymptotically Euclidean Bianchi IX metrics in quantum gravity}, Phys. Lett. 76B (1978) 433-435.
	\bibitem{berw-uber}
	L. Berwald, {\"{U}ber die $n$-dimensionalen Geometrien konstanter
		Kr\"{u}mmung}, in denen die Geraden die k\"{u}rzesten sind, Math. Z.
	{30} (1929) 449-469.
	\bibitem{Brendle}
	S. Brendle and N. Kapouleas, {Gluing Eguchi-Hanson Metrics and a Question of Page}, Comm. Pure. Appl. Math. 70 (2016) 1366-1401.
	\bibitem{brya-some}
	R. Bryant, {Some remarks on Finsler manifolds with constant flag
		curvature}, Houston J. Math. {28}(2) (2002) 221-262.
	\bibitem{Corral}
	C. Corral, D.F. Alfonso, G. Giribet and J. Oliva, {Higher-curvature generalization of Eguchi-Hanson spaces}, Phys. Rev. D {106} (2022).
	\bibitem{Dancer}
	A. S. Dancer, {Hyper-K\"ahler manifolds}, Surveys in Differential Geometry VI (1999) 15-38. 
	\bibitem{EH1}
	T. Eguchi and A.J. Hanson, {Asymptoticallyflat self-dual solutions to Euclidean gravity}, Phys. Lett. {237} (1978) 249-251.
	\bibitem{EH2}
	T. Eguchi and A.J. Hanson, {Self-dual solutions to Euclidean gravity}, Ann. Phys. {120} (1979) 82-106.
	\bibitem{funk-uber}
	P. Funk, {\"{U}ber Geometrien bei denen die Geraden die
		K\"{u}rzesten sind}, Math. Ann. {101} (1929) 226-237.
	\bibitem{Hitchin}
	N. J. Hitchin, {Twistor space, Einstein metrics and isomonodromic deformations}, J. Diff. Geom. 42 (1995) 30-112.
	\bibitem{Hawking}
	S.W. Hawking {Gravitational Instantons}, Phys. Lett. A {60} (1977) 81-83.
	\bibitem{LeBrun}
	C. LeBrun, {Complete Ricci-flat K\"ahler metrics on $\mathbb C^n$ need not be flat}, in Several Complex Variables and
	Complex Geometry Part 2 (Santa Cruz CA 1989) Proc. Sympos. Pure Math. Vol. 52 Part 2 Amer. Math. Soc. Providence RI 1991 297-304.
	\bibitem{Ymatsumoto}
	Y. Matsumoto, {A construction of Poincar\'{e}-Einstein metrics of cohomogeneity one on the ball}, Proc. AMS 147(9) 2019 3983-3993.
	\bibitem{Pedersen2}
	H. Pedersen, {Einstein metrics, spinning top motions and monopoles}, Math. Ann. 274 (1986) 35-59.
	\bibitem{xy}
	X. Tang and C. Yu, {Some remarks on Einstein-Randers metrics}, Diff. Geom. Appl. {58} (2018) 83-102.
	\bibitem{yct-dhfp}
	C. Yu, {Deformations and Hilbert's Fourth Problem}, Math. Ann. {365} (2016) 1379-1408.
	\bibitem{yct-zhm-pfga}
	C. Yu and H. Zhu, {Projectively flat general $\ab$-metrics with constant flag curvature}, J. Math. Anal. Appl. {412} (2015) 664-675.
\end{thebibliography}
\end{document}